\documentclass{sn-jnl}

\usepackage{enumitem}

\usepackage{graphicx}%
\usepackage{multirow}%
\usepackage{amsmath,amssymb,amsfonts}%
\usepackage{amsthm}%
\usepackage{xcolor}%
\usepackage{textcomp}%
\usepackage{manyfoot}%
\usepackage{booktabs}%
\usepackage{algorithm}%
\usepackage{algorithmicx}%
\usepackage{algpseudocode}%
\usepackage{listings}%
\usepackage{todonotes}

\usepackage{circuitikz}
\usetikzlibrary{arrows,shapes,backgrounds,patterns}
\usepackage{pifont}
\usepackage{tikzgraphicx}
\tikzstyle{every picture}+=[remember picture]
\tikzstyle{na} = [baseline=-.5ex]
\makeatletter
\newcommand{\gettikzxy}[3]{%
  \tikz@scan@one@point\pgfutil@firstofone#1\relax
  \edef#2{\the\pgf
@x}%
  \edef#3{\the\pgf@y}%
}
\makeatother

\newtheorem{theorem}{Theorem}[section]
\newtheorem{proposition}[theorem]{Proposition}%
\newtheorem{definition}[theorem]{Definition}%
\newtheorem{remark}[theorem]{Remark}%

\newtheorem{Lemma}[theorem]{Lemma}

\usepackage[backend=biber, doi = false, url=false,isbn=false, eprint=false, maxnames=99, minnames=99, firstinits=true,
natbib=true]{biblatex} 
\newcommand{\R}{\mathbb{R}}

\usepackage[dvipsnames]{xcolor}

\def\sofya{\textcolor{black}}

\graphicspath{{figures/}}

\begin{document}

\title[Article Title]{Non Static Exponential Turnpike Property for Optimal Control Problems with Symmetries and Boundary Conditions}

\author[1]{\fnm{Sofya} \sur{Maslovskaya }}\email{sofyam@math.upb.de}
\equalcont{These authors contributed equally to this work.}

\author[1]{\fnm{Sina} \sur{Ober-Blöbaum }}\email{sinaober@math.upb.de}
\equalcont{These authors contributed equally to this work.}

\author*[1]{\fnm{Boris} \sur{Wembe}}\email{wboris@math.upb.de}
\equalcont{These authors contributed equally to this work. This work has been supported by Deutsche Forschungsgemeinschaft (DFG), Grant No. OB 368/6-1, AOBJ: 716309, and Grant No. MA 12013/1-1, AOBJ: 716308.}

\affil[1]{\orgdiv{Department of Mathematics}, \orgname{Paderborn University}, \orgaddress{\country{Germany}}}

\abstract{Optimal control problems with symmetries often admit a non stationary turnpike property called \emph{trim turnpike}, which characterizes the convergence of optimal solutions to certain symmetry induced trajectories called trim primitives. In this paper we establish an exponential trim turnpike property for a class of optimal control problems with structural properties related to Abelian Lie group symmetries. Our results extend \cite{Flasskamp2025} to the more general case where all state variables satisfy fixed endpoint constraints.
The key ingredient of our approach is the introduction of an appropriate reduced optimal control problem. We show that extremals of the original problem can be characterized through a reduced Hamiltonian boundary value problem that coincides with the optimality system of the reduced problem.
Under a hyperbolicity assumption on the equilibrium of the corresponding reduced Hamiltonian system we prove that optimal trajectories remain exponentially close, up to boundary layers near the endpoints, to a trim primitive defined by the static reduced problem. The theoretical results are illustrated on three representative examples: linear and nonlinear problems with quadratic cost and the Kepler orbital transfer problem.
}

\keywords{Trim turnpike property, optimal control problems with symmetries, boundary conditions, minimum principle, hyperbolic equilibrium.}

\pacs[MSC Classification]{35A01, 65L10, 65L12, 65L20, 65L70}

\maketitle

\section{Introduction and statement of the problem}

The \emph{turnpike phenomenon} is one of the central organizing principles in long-horizon optimal control. Roughly speaking, it asserts that optimal trajectories spend most of the time horizon close to a distinguished reference object, while the influence of the initial and terminal conditions is essentially confined to boundary layers near the endpoints.
In the classical setting, this reference object is a steady state solving an associated static optimization problem, and the corresponding \emph{exponential turnpike property}
states that the convergence toward this steady state occurs at an exponential rate. This phenomenon has been identified in various applications of optimal control \cite{Caillau2022, Cots2021, Djema2021, McKenzie1986, PUTTSCHNEIDER2026} and extensively studied both from the viewpoint of dynamical systems and from that of optimization and PDE analysis; see, among many others, \cite{Porretta2013, Sakamoto2021, TreZua2015, Trelat2025}. 

In its most familiar form, the turnpike property is associated with static optimal pairs. However, many systems of practical interest exhibit a different behavior and their optimal solutions do not converge to steady-point. This is notably the case for periodic systems or systems possessing symmetries where the relevant long-time behavior is better described by nonstationary reference motions. This observation has motivated the development of several \emph{non-static} versions of the turnpike phenomenon. In particular, linear turnpikes arise when some state components tend to evolve linearly in time rather than converging to constants \cite{Tre:23}. More generally, manifold turnpikes and trim turnpikes capture situations in which the turnpike set is a sub-manifold or a symmetry-induced trajectory rather than a single equilibrium \cite{Faulwasser2022, Flasskamp2025}. These extensions are especially relevant for mechanical systems, including orbital transfer models and general motion planning problems, which naturally admit a symmetry with respect to phase, position, or orientation.

More precisely, we say that an optimal control problem is symmetric with respect to a Lie group action if the running cost is invariant under the action and the control system is equivariant with respect to it. In case of mechanical systems, the equivariance of the control system stems from the variational principle behind, which involves a mechanical Lagrangian invariant with respect to the symmetry action. When the Lie group is Abelian, it is well known that there exist local coordinates, such that a part of state variables does not appear in the cost and the dynamics explicitly \cite{Grizzle1985}. In classical mechanics these coordinates are commonly referred to as \emph{shape} and \emph{cyclic} coordinates \cite{Marsden1993,marsden2013}, where cyclic variables do not appear explicitly in the mechanical Lagrangian. Generalizing the concept to the optimal control setting, we call those state variables cyclic, which do not appear explicitly in the state dynamics nor in the cost. 

In the presence of cyclic variables, the usual static turnpike picture is no longer sufficient to describe optimal solutions. A first important result in this direction is the \emph{linear turnpike theorem}, where the non-cyclic variables converge toward a steady regime while the cyclic variables evolve approximately linearly in time \cite{Tre:23}. On the other hand, in symmetric optimal control problems, recent work has shown that optimal
trajectories may converge exponentially toward a \emph{trim}, that is, a nonstationary reference trajectory generated by symmetry reduction \cite{Flasskamp2025}. Nevertheless, the latter analysis relies on a crucial simplifying assumption: the final conditions on the state variable are invariant with respect to the symmetry group action. In the case of problem formulations involving cyclic variables, this corresponds to free final conditions on the cyclic variables. Once endpoint conditions are imposed on the cyclic variables or, more generally, the final conditions are not invariant with respect to the symmetry, the classical symmetry reduction \cite{Ohsawa2013} does not allow to characterize the optimal solutions through the reduced optimal control problem on the principle bundle anymore, because the full end-point conditions must be taken into account. As a result, the construction of the appropriate turnpike reference becomes substantially more delicate.

This issue is not only of theoretical interest. In many applications, the terminal value of the cyclic variable is part of the problem data and cannot be ignored. Typical examples include transfer problems with prescribed final phase in space trajectory optimization, reorientation or rendezvous tasks with fixed terminal attitude, and mechanical systems where symmetry variables encode physically meaningful endpoint constraints; see for instance \cite{Betts2010, Bloch2007, Bullo2004}. In such situations, one expects the optimal trajectory to remain close to a symmetry-induced motion in the major middle part of the time interval, while being compatible with two-sided boundary conditions. The central question that must be addressed in order to characterize the turnpike in symmetric problems with general boundary conditions is how to define the correct reduced problem and the associated trim.

The purpose of this paper is to investigate this question for a class of finite-horizon optimal control problems with cyclic variables and fixed initial and final conditions. We consider systems of the form
\begin{equation}
\begin{aligned}
\min_{u(\cdot) \in L^\infty([0,T])}\; J_T(u,x)
&= \int_0^T \left( f^0(x(t)) + \tfrac12\, u(t)^\top R(x(t)) u(t) \right)\,dt, \\
\text{s.t. }\quad
\dot x(t) &= f_1(x(t)) + F_2(x(t))\,u(t), \\
\dot y(t) &= g_1(x(t)) + G_2(x(t))\, u(t), \\
x(0)&=x_0,\quad x(T)=x_T, \\
y(0)&=y_0,\quad y(T)=y_T,
\end{aligned}
\label{eq:OCP}
\end{equation}
where $x(t)\in\mathbb{R}^n$ denotes the non-cyclic state (or shape variable), $y(t)\in\mathbb{R}^p$ is a cyclic variable, and $u(t)\in\mathbb{R}^m$ is the control input. The matrix
$R(\cdot)\in\mathbb{R}^{m\times m}$ is symmetric positive definite, and the matrices $F_2(x)\in\mathbb{R}^{n\times m}$, $G_2\in \mathbb{R}^{p\times m}$, collecting the control vector fields, are of class $C^2$. 
The variable $y$ is cyclic in the sense that it does not appear explicitly in the dynamics nor in the cost functional. Nevertheless, the presence of boundary conditions on $y$ couples the evolution of the cyclic and the shape variables through the optimality conditions and plays a central role in the analysis. 
This setting contains, as particular cases, linear--quadratic cyclic problems of the type considered in \cite{Dario2021}, as well as symmetric problems with Abelian symmetry group represented in shape-cyclic local coordinates \cite{Flasskamp2025}.  

\paragraph{Relation to linear--quadratic turnpike problems.}
When the functions $f^0$, $f_1$ and $g_1$ are linear vector fields and the matrices $F_2$ and $G_2$ constant, problem~\eqref{eq:OCP} reduces to a linear--quadratic optimal control problem with cyclic variables. More precisely, the reduced dynamics become linear and the running cost quadratic in the state and the control. In this case, the optimality system derived from Pontryagin's Minumum Principle is a linear Hamiltonian boundary value problem, and the turnpike phenomenon can be analyzed through the spectral properties of the associated Hamiltonian matrix.
Linear--quadratic problems constitute the classical setting in which turnpike properties were first rigorously established, both in economics and control theory. In particular, exponential turnpike results are well understood in the LQ framework, where they follow from hyperbolicity of the Hamiltonian system and from standard stabilizability and detectability assumptions. We refer, for instance, to
\cite{TreZua2015, Porretta2013, Trelat2025} for representative results on exponential turnpikes in linear--quadratic optimal control problems.
The present framework includes, as a particular case, the cyclic LQ setting studied in \cite{Dario2021},  where an exponential turnpike property was also discussed but without an explicit characterization of the turnpike trajectory toward which the optimal solutions converge. The approach developed in the present paper clarifies this point by identifying the turnpike through the static optimal problem associated with the reduced optimal control problem (ROCP) and by establishing an \textit{exponential trim turnpike} property even in the presence of boundary conditions on the cyclic variables.

\paragraph{Trim primitives and their geometric interpretation.}
A particular role in the analysis of the turnpike property for problems of the form \eqref{eq:OCP} is played by trajectories that are stationary in the shape variables but nonstationary in the cyclic variables. The special structure of \eqref{eq:OCP} makes such trajectories easy to characterize. Let $\bar x$ be a fixed point of the reduced dynamics in the $x$-variables for some constant control $\bar u$, that is,
$f_1(\bar x) + F_2(\bar x)\bar u = 0$. Substituting $(\bar x,\bar u)$ into the cyclic dynamics,
\[
\dot y = g_1(\bar x) + G_2(\bar x)\bar u,
\]
generally yields a non-vanishing velocity. Therefore the trajectory remains constant in $x$ while the cyclic variable evolves in time. Such trajectories are known as \emph{trim primitives}, or simply \emph{trims} \cite{Frazz2005}. In classical mechanics such motions correspond to relative equilibria, equilibria of the reduced dynamics obtained by symmetry reduction \cite{Marsden1993,marsden2013}. Geometrically, trims describe motions along the symmetry group orbit. When the group action is free and proper, it induces a principle bundle structure on the state space. Locally, it has a trivial bundle structure and can be represented as a product of the quotient space and the Lie group. Shape-cyclic coordinates are local coordinates adapted to the local trivial bundle structure. Shape coordinates represent coordinates on the base space, while cyclic coordinates are defined on the Lie group. In case of a symmetric optimal control problem with Abelian Lie group symmetry, \eqref{eq:OCP} appears as a formulation in shape-cyclic local coordinates. Trims in this context correspond to motions, which are non stationary only in the Lie group direction and stationary in the base space component.

The approach to define the reduced problem considered in this paper differs from the symmetry reduction-based one from \cite{Flasskamp2025} in an essential way. Rather than postulating a reduced static optimization problem from \eqref{eq:OCP}, we derive the appropriate reduced problem from the full Pontryagin boundary value problem. The starting point is the observation that the adjoint variable associated with the cyclic component is constant along extremals.  This constant multiplier enters the Hamiltonian as a parameter and leads naturally to a \emph{family} of reduced optimal control problems. Each choice of the parameter corresponds to a choice of the boundary conditions. It follows that the reduced problem corresponding to \eqref{eq:OCP} depends on the boundary values through the adjoint to the cyclic variable. We show that the extremals of this reduced problem coincide with the solutions of the reduced Hamiltonian boundary value problem obtained from the full optimality system. This construction provides the correct reduced dynamics in the presence of fixed endpoint constraints on the cyclic variables.

The turnpike reference is then determined through an associated static problem for the reduced optimal control problem. Under a hyperbolicity assumption on the equilibrium of the corresponding reduced Hamiltonian system, we establish an \emph{exponential trim turnpike property}: the non-cyclic variables and the control remain exponentially close to a
steady pair, while the cyclic variable remains exponentially close to a trim trajectory selected by a midpoint anchoring mechanism. This midpoint anchoring is the
natural generalization, in the boundary-constrained setting, of the reference trim from \cite{Flasskamp2025}. The proposed strategy allows to recover the reduced problem and the corresponding trim turnpike from \cite{Flasskamp2025}, when the final condition on the cyclic variable is free, since in that case the cyclic adjoint vanishes by transversality condition and the newly defined reduced problem coincides with the problem obtained from direct symmetry reduction.

Beyond its theoretical significance, the present viewpoint is also important for numerical considerations. Turnpike structure is known to be closely related to singularly perturbed Hamiltonian systems, and this link can be exploited algorithmically through continuation and homotopy techniques \cite{Cots2021}. In particular, once a reduced problem and its associated static optimizer are identified, it can be used for initialization of the shooting methods for solving long-horizon boundary value problems. This makes the construction developed here potentially useful not only for the analysis of turnpike phenomena, but also for the design of numerically efficient shooting and homotopy methods in the presence of fully fixed endpoints.

The rest of the paper is organized as follows. Section~\ref{sec:PMP_RBVP_ROCP} derives the Pontryagin optimality system, introduces the reduced boundary value problem, and constructs the corresponding reduced and static optimal control problems. Section~\ref{sec:turnpike} states and proves the exponential trim turnpike theorem. Finally, Section~\ref{sec:applications} illustrates the theory on linear and a nonlinear problems with quadratic cost, and on the Kepler orbital transfer problem.

\begin{remark}
To simplify the following computations we assume $R$ to be constant. Notice that the case of a state dependent quadratic term in the cost can be reduced to constant $R$ by an appropriate feedback change of the control $v = M(x)u$. Such a change of control results only in a change of $F_1(x), G_2(x)$.  
\end{remark}

\section{Reduced optimal control problem}
\label{sec:PMP_RBVP_ROCP}

In this section we recall the Pontryagin Minimum Principle (PMP) associated with problem~\eqref{eq:OCP} and derive the corresponding Hamiltonian boundary value problem. Based on this formulation, we construct a reduced boundary value problem and introduce an appropriate reduced optimal control problem, together with its associated static formulation, whose solution naturally defines the trim trajectory. The main result of this section establishes the equivalence between solutions of the Hamiltonian system associated with the reduced optimal control problem and solutions of the reduced Hamiltonian system obtained from the full PMP system. This equivalence ensures the well-posedness of the reduced problem and provides the analytical foundation for the exponential trim turnpike result presented in the next section.

\subsection{Pontryagin Minimum Principle}

Introduce adjoint variables $p_x(t)\in\mathbb{R}^n$ and $p_y(t)\in\mathbb{R}^p$, the multiplier $p^0$ and define the Hamiltonian
\begin{multline}
    H(x,y,p_x,p_y,\sofya{p^0,} u) = p_x^\top\left(f_1(x) + F_2(x)u\right)
+ p_y^\top\left(g_1(x) + G_2(x)u\right) \\ + p^0(f^0(x) + \tfrac12\, u^\top R u).
\label{eq:H}
\end{multline}

According to the Pontryagin's Minimum Principle (PMP) \cite{Pontryagin:1962}, if $(x(t), y(t), u(t))$ form an optimal solution of \eqref{eq:OCP} for $t \in [0,T]$, then there exists a scalar $p^0 \geq 0$ and an absolutely continuous function $p = (p_x, p_y): [0,T] \rightarrow \R^n \times \R^p $ satisfying $(p(\cdot), p^0) \neq (0,0)$, such that 
\begin{eqnarray}
\label{eq:pmp_conditions_x}
\dot{x}(t) &=& \nabla_{p_x} H[t] , \quad \dot{p}_x(t) = - \nabla_x H[t] , \\[0.5em]
\label{eq:pmp_conditions_y}
\dot{y}(t) &=& \nabla_{p_y} H[t] , \quad \dot{p}_y(t) = - \nabla_y H[t] , \\[0.5em]
\label{eq:pmp_conditions_u}
0 &=& \nabla_u H[t]     
\end{eqnarray}
for almost every $t \in [0,T]$ and with $[t] = (x(t), y(t), p_x(t), p_y(t), p^0, u(t))$. Also, we assume the solution not to be abnormal, i.e.\ $p^0 \neq 0$ and normalize $p^0$ to $p^0 = 1$. Functions $(x(\cdot), y(\cdot), p_x(\cdot), p_y(\cdot), u(\cdot))$ satisfying \eqref{eq:pmp_conditions_x}-\eqref{eq:pmp_conditions_u} and boundary conditions from \eqref{eq:OCP} are called extremals. Since the Hamiltonian does not depend on $y$, the associated adjoint satisfies $\dot p_y=0$, hence
\begin{equation}
p_y(t)\equiv \lambda\in\mathbb{R}^p,
\label{eq:py_const}
\end{equation}
which we will refer in the following as the \emph{cyclic multiplier}. On the other hand, the minimization condition $\nabla_u H=0$ yields
\begin{equation}
\begin{aligned}
& && R u + F_2(x)^\top p_x + G_2(x)^\top \lambda = 0, \\
& \text{i.e.} ~~ &&
u^\star(t) = -R^{-1}\left(F_2(x(t))^\top p_x(t) + G_2(x(t))^\top \lambda\right).
\end{aligned}
\label{eq:u_star}
\end{equation}
Substituting \eqref{eq:u_star} into the state--adjoint dynamics gives the full Hamiltonian boundary value problem in variables $(x,y,p_x)$, parametrized by $\lambda$:
\begin{equation}
\tag{FBVP}
\begin{aligned}
\dot x(t) &=
f_1(x(t)) - F_2(x(t))R^{-1}\left(F_2(x(t))^\top p_x(t) + G_2(x(t))^\top \lambda\right),\\
\dot y(t) &=
g_1(x(t)) - G_2(x(t))R^{-1}\left(F_2(x(t))^\top p_x(t) + G_2(x(t))^\top \lambda\right),\\
\dot p_x(t) &= -\nabla f^0(x(t))
- Df_1(x(t))^\top p_x(t)
- \left(DF_2(x(t))[\,\cdot\,]\,u^\star(t)\right)^\top p_x(t)\\
&\quad - Dg_1(x(t))^\top \lambda
- \left(DG_2(x(t))[\,\cdot\,]\,u^\star(t)\right)^\top \lambda.
\end{aligned}
\label{eq:FBVP_dyn}
\end{equation}
with boundary conditions
\begin{equation}
x(0)=x_0,\quad x(T)=x_T,
\qquad
y(0)=y_0,\quad y(T)=y_T.
\label{eq:FBVP_bc}
\end{equation}
Here $DF_2(x)[\,\cdot\,]$ denotes the Jacobian of the matrix field $F_2$. 

\medskip
\begin{remark}
In this paper we restrict the analysis to the \emph{normal} case in Pontryagin's Minimum Principle, namely when the multiplier associated with the cost functional is nonzero.
Under this assumption the Hamiltonian can be normalized and the optimality system takes the standard form used throughout the paper. The possible presence of abnormal extremals
is not considered here. This restriction is standard in the analysis of turnpike properties. 
\end{remark}

\subsection{Reduced problem}

Since the cyclic component enters the optimality system only through the constant parameter $\lambda=p_y$, the $(x,p_x)$-subsystem in \eqref{eq:FBVP_dyn} forms a closed boundary value problem once $\lambda$ is fixed. We define the \emph{reduced boundary value problem (RBVP)}
as the two-point BVP in $(x,p_x)$:
\begin{equation}
\begin{aligned}
\dot x(t) &=
f_1(x(t)) - F_2(x(t))R^{-1}\left(F_2(x(t))^\top p_x(t) + G_2(x(t))^\top \lambda\right), \\
\dot p_x(t) &= -\nabla f^0(x(t)) 
-  \left( Df_1(x(t))^\top + \left(DF_2(x(t))[\,\cdot\,]\,u^\star(t)\right)^\top \right) p_x(t), \\
&\hspace{2.3cm} - \left( Dg_1(x(t))^\top 
+ \left(DG_2(x(t))[\,\cdot\,]\,u^\star(t)\right)^\top \right) \lambda, \\ 
 x(0)&=x_0, \quad x(T)=x_T,
\end{aligned}
\label{eq:RBVP}
\end{equation}
where $u^\star$ is given by \eqref{eq:u_star} and the boundary conditions on the cyclic variable $y$ in \eqref{eq:FBVP_bc} is used to determine the corresponding value of $\lambda$.

\begin{definition}
Fixing the parameter $\lambda\in\mathbb{R}^p$, the reduced optimal control problem $\mathrm{ROCP}(\lambda)$ is defined by:
\begin{equation*}
\begin{aligned}
\min_{u(\cdot) \in L^\infty([0,T])}\;\; \mathcal{J}_T^\lambda(u,x)
&= \int_0^T \left(
f^0(x(t)) + \tfrac12\, u(t)^\top R u(t) + \lambda^\top (g_1(x(t)) + G_2(x)u)
\right)\,dt,\\
\text{s.t.}\quad
\dot x(t) &= f_1(x(t)) + F_2(x(t))u(t),\\
x(0)&=x_0,\quad x(T)=x_T.
\end{aligned}
\label{eq:ROCP}
\end{equation*}
The associated \emph{static} problem is given by
\begin{equation}
\begin{aligned}
\min_{(x,u)}\;\; \mathcal{J}_s^\lambda(x,u)
&:= f^0(x) + \tfrac12\, u^\top R u + \lambda^\top \left( g_1(x) + G_2(x)u \right),\\
\text{s.t.}\quad
0 &= f_1(x) + F_2(x)u,
\end{aligned}
\label{eq:static_ROCP}
\end{equation}
where the constraint enforces stationarity of the reduced dynamics. Any minimizer $(\bar x,\bar u)$ (or $(\bar x, \bar p_x, \bar u)$ considering the associated adjoint \sofya{$\bar p_x$}) of \eqref{eq:static_ROCP} will be called a \emph{static optimizer} associated with $\mathrm{ROCP}(\lambda)$. In the turnpike theorem stated in the next section, $(\bar x, \bar u)$ denotes such a static optimizer, typically corresponding to $\lambda=\bar\lambda$ selected by the boundary condition $y_T$ on the cyclic variable.
\end{definition}

\begin{definition}[Trim turnpike] \label{def:trim.turnpike}
A \emph{trim turnpike} is a triple $(\bar x, \bar y(\cdot), \bar u)$ together with its associated adjoint multiplier $(\bar p_x, \bar \lambda)$, such that $\bar \lambda$ is the adjoint associated to $y$, the pair $(\bar x, \bar p_x)$ satisfies the reduced boundary value problem \eqref{eq:RBVP}, $\bar u = -R^{-1}\left(F_2(\bar x)^\top \bar p_x + G_2(\bar x)^\top \lambda\right)$ and $\bar y(\cdot)$ is solution of the differential equation:
\[
\dot{\bar y}_T(t) = g_1(\bar x) + G_2(\bar x)\,\bar u,
\qquad
\bar y_T(T/2) = \bar y_{T/2} \in \R^p.
\label{eq:trim}
\]
\end{definition}

\medskip
\begin{remark}
Definition~\ref{def:trim.turnpike} coincides with the definition of the trim turnpike from \cite{Flasskamp2025} in case of shape-cyclic local coordinates on the state space and free final conditions on the cyclic variable. In the case of free-final condition on cyclic variable, the cyclic multiplier satisfies $\lambda=0$, and \eqref{eq:static_ROCP} reduces to the static optimization problem for the reduced system introduced in \cite{Flasskamp2025}.  When the terminal condition on the cyclic variable is prescribed, $\lambda$ is generally nonzero and modifies the static objective through the additional term $\lambda^\top \left( g_1(x) + G_2(x)u \right)$, while preserving the stationarity constraint $f_1(x)+F_2(x)u=0$. The trim trajectory in both cases is induced by the group action generated by a constant infinitesimal generator depending on $(\bar{x}, \bar{u})$. 
\end{remark}

\medskip
\begin{proposition}[Equivalence between extremals of ROCP and solutions of RBVP]
\label{prop:ROCP_RBVP}
The pair $(\tilde x(\cdot),\tilde p_x(\cdot))$ satisfies the RBVP \eqref{eq:RBVP} parameterized by $\lambda$ with $u^\star$ defined in \eqref{eq:u_star} if and only if $(\tilde x(\cdot),\tilde p_x(\cdot),u^\star(\cdot))$ is an extremal of  $\mathrm{ROCP}(\lambda)$.
\end{proposition}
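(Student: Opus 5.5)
The plan is to apply the Pontryagin Minimum Principle directly to $\mathrm{ROCP}(\lambda)$ and compare the resulting normal extremal system, term by term, with \eqref{eq:RBVP}. First I write the Hamiltonian of $\mathrm{ROCP}(\lambda)$ in the normal case $p^0=1$:
\[
H_\lambda(x,p_x,u) = p_x^\top\bigl(f_1(x)+F_2(x)u\bigr) + f^0(x) + \tfrac12 u^\top R u + \lambda^\top\bigl(g_1(x)+G_2(x)u\bigr).
\]
The key observation is that $H_\lambda(x,p_x,u)=H(x,y,p_x,\lambda,1,u)$, i.e. $H_\lambda$ is exactly the full Hamiltonian \eqref{eq:H} with $p_y$ frozen at $\lambda$. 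This holds because the extra running cost $\lambda^\top(g_1+G_2u)$ reproduces the term $p_y^\top \dot y$. Moreover, neither Hamiltonian depends on $y$.

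Next, I derive the extremal conditions of $\mathrm{ROCP}(\lambda)$. These are
\[
\dot x=\nabla_{p_x}H_\lambda,\qquad \dot p_x=-\nabla_x H_\lambda,\qquad 0=\nabla_u H_\lambda,
\]
together with $x(0)=x_0$ and $x(T)=x_T$. Since $R$ is symmetric positive definite, $H_\lambda$ is strictly convex in $u$. Hence the condition $\nabla_u H_\lambda = Ru+F_2(x)^\top p_x+G_2(x)^\top\lambda=0$ is equivalent to minimization in $u$, and it has the unique solution $u^\star$ given by \eqref{eq:u_star}. So the control of any extremal is forced to be $u^\star$, and conversely $u^\star$ satisfies the stationarity condition.

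I then compute the remaining two equations. The state equation is $\nabla_{p_x}H_\lambda=f_1+F_2u$; substituting $u=u^\star$ gives the first line of \eqref{eq:RBVP}. For the adjoint equation I compute $\nabla_x H_\lambda$ with $u$ held fixed, using the chain rule on the matrix fields:
\[
\nabla_x\bigl(p_x^\top F_2(x)u\bigr)=\bigl(DF_2(x)[\cdot]\,u\bigr)^\top p_x,
\]
and similarly for the $G_2$ term with $\lambda$. Evaluating at $u=u^\star$ yields exactly the $\dot p_x$ line of \eqref{eq:RBVP}. This also matches the $p_x$-equation of \eqref{eq:FBVP_dyn}, since the partial derivatives in $x$ agree. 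The boundary conditions on $x$ coincide as well.

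Both implications then follow by reading this identification in each direction. If $(\tilde x,\tilde p_x)$ solves \eqref{eq:RBVP}, then $(\tilde x,\tilde p_x,u^\star)$ satisfies all extremal conditions of $\mathrm{ROCP}(\lambda)$ with $p^0=1$. Conversely, any normal extremal has control $u^\star$ by uniqueness, and its $(x,p_x)$ component solves \eqref{eq:RBVP}.

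The only delicate step is the notation. One must check that in \eqref{eq:RBVP} the term involving $DF_2$ means differentiation with $u$ held fixed and then evaluated at $u^\star$, rather than total differentiation through $u^\star(x,p_x)$. In fact the two readings coincide, by an envelope argument: since $\nabla_uH_\lambda=0$ at $u^\star$, the extra terms from differentiating through $u^\star$ vanish. So the result is unaffected either way. I would also state explicitly that "extremal" refers to normal extremals, consistent with the paper's standing assumption.
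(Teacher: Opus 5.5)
Your proposal is correct and follows essentially the same route as the paper: write the Hamiltonian of $\mathrm{ROCP}(\lambda)$, solve $\nabla_u H^\lambda=0$ to recover $u^\star$, and match the state equation, the adjoint equation and the boundary conditions term by term with \eqref{eq:RBVP}. Your extra remarks are correct refinements that the paper leaves implicit. These are the strict convexity in $u$ (so stationarity is equivalent to minimization), the identification of $H^\lambda$ with the full Hamiltonian at $p_y=\lambda$, $p^0=1$, and the envelope argument showing the $DF_2$, $DG_2$ terms are unambiguous.
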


\begin{proof}
The Hamiltonian associated with $\mathrm{ROCP}(\lambda)$ is given by
\begin{equation}
H^\lambda(x,p_x,u) = f^0(x) + \tfrac12\, u^\top R u + \lambda^\top \left( g_1(x) + G_2(x)u \right) + p_x^\top\left(f_1(x) + F_2(x)u\right).
\label{eq:H_ROCP}
\end{equation}
The minimization condition $\nabla_u H^\lambda=0$ yields
\begin{equation}
\begin{aligned}
&&& R \tilde u(t) + F_2(\tilde x(t))^\top \tilde p_x(t) + G_2(\tilde x(t))^\top \lambda = 0, \\
\text{i.e.} \quad &&& 
\tilde u(t) = -R^{-1}\left(F_2(\tilde x(t))^\top \tilde p_x(t) + G_2(\tilde x(t))^\top \lambda \right).
\end{aligned}
\label{eq:u_opt}
\end{equation}
which coincides with the optimal feedback law \eqref{eq:u_star} derived from the full
Pontryagin system.

Now, substituting \eqref{eq:u_opt} into the state equation of
$\mathrm{ROCP}(\lambda)$ yields
\[
\dot{\tilde x}(t)
= f_1(\tilde x(t)) - F_2(\tilde x(t)) R^{-1}\left(F_2(\tilde x(t))^\top \tilde p_x(t) + G_2(\tilde x(t))^\top \lambda \right),
\]
which coincides with the $x$-equation of the reduced boundary value problem
\eqref{eq:RBVP}.
On the other hand, computing the adjoint equation 
\begin{equation}
\dot{\tilde p}_x(t)
= -\nabla_x H^\lambda(\tilde x(t),\tilde p_x(t),\tilde u(t)).
\label{eq:adj_ROCP}
\end{equation}
we obtain
\begin{equation}
\begin{aligned}
\dot{\tilde p}_x(t)
&= -\nabla f^0(\tilde x(t))
- Dg_1(\tilde x(t))^\top\lambda
- \left(DG_2(\tilde x(t))[\,\cdot\,]\tilde u(t)\right)^\top \lambda \\
&\quad - Df_1(\tilde x(t))^\top \tilde p_x(t) - \left(DF_2(\tilde x(t))[\,\cdot\,]\tilde u(t)\right)^\top \tilde p_x(t),
\end{aligned}
\label{eq:adj_explicit}
\end{equation}
where the computation of the gradient of $H^\lambda$ with respect to $x$ is done using the chain rule.
The right-hand side of \eqref{eq:adj_explicit} coincides exactly with the $p_x$-equation of the reduced boundary value problem \eqref{eq:RBVP}.

Finally, the boundary conditions on $\tilde x$ are the same in $\mathrm{ROCP}(\lambda)$
and in the reduced boundary value problem, namely
\[
\tilde x(0)=x_0,
\qquad
\tilde x(T)=x_T.
\]
Combining the state equation, the adjoint equation, and the explicit control law \eqref{eq:u_opt}, we conclude that $(\tilde x(\cdot),\tilde p_x(\cdot))$ satisfies the reduced boundary value problem \eqref{eq:RBVP} if and only if $(\tilde x(\cdot),\tilde p_x(\cdot),u^\star(\cdot))$ is an extremal of  $\mathrm{ROCP}(\lambda)$. This completes the proof.
\end{proof}

\medskip
\begin{remark} 
\label{rem:ROCP_wellposed}
(Relation to the free-final conditions case). \\
From a structural perspective, $\mathrm{ROCP}(\lambda)$ should be understood as a $\lambda$-parametrized family of reduced problems obtained by incorporating the constant multiplier associated to the cyclic variable into the reduced dynamics and cost. More precisely, the influence of the cyclic variables is absorbed into (i) a state-dependent shift of the control and (ii) an additional linear term $\lambda^\top g_1(x)$ in the running cost. This construction is precisely what allows the $(x,p_x)$-subsystem of the full Pontryagin boundary value problem to be self-contained for fixed $\lambda$, leading to the reduced boundary value problem \eqref{eq:RBVP}. This special structure of the boundary value problem can be seen as a consequence of the symmetry reduction in case of symmetric problems. 
\end{remark}

\medskip
\begin{remark} 
\label{rem:ROCP_symmetries}
(Relation between the reduced problem and symmetries). \\
The constant nature of $p_y$ plays a central role in the definition of the static reduced problem. The fact that $p_y$ is constant allows to define an autonomous reduced OCP, and the static problem by considering only the fixed points of the dynamical constraint. The property of the adjoint to the cyclic variable $y$ to be constant is related to the symmetry in $y$ through Noether's theorem of optimal control problems \cite{Torres:04}. It is easy to see that \eqref{eq:OCP} is symmetric with respect to shifts in the $y$-variable. The Noether theorem in this case leads to the conserved quantity $p_y$. Notice that the conserved quantity will be preserved under a change of coordinates, which makes it an intrinsic parameterization of the trim turnpike, when the problem is not stated in shape-cyclic coordinates.  
\end{remark}

\subsection{Relation between cyclic multiplier and boundary conditions}
\label{sec:lambda_selection}

As shown in Section~\ref{sec:PMP_RBVP_ROCP}, for each fixed $\lambda\in\mathbb{R}^p$,
the reduced optimal control problem $\mathrm{ROCP}(\lambda)$ generates, through its Pontryagin extremals, a solution $(x^\lambda(\cdot),u^\lambda(\cdot))$ of the reduced boundary value problem \eqref{eq:RBVP}. For such a solution, the evolution of the cyclic variable is uniquely determined by
\begin{equation}
\dot y^\lambda(t)
= g_1(x^\lambda(t)) + G_2(x^\lambda(t))\,u^\lambda(t),
\qquad
y^\lambda(0)=y_0.
\label{eq:y_reconstruction}
\end{equation}
Integrating \eqref{eq:y_reconstruction} over $[0,T]$ defines the map
\begin{equation}
\mathcal{Y}_T(\lambda)
:= y^\lambda(T)
= y_0 + \int_0^T
\left(g_1(x^\lambda(t)) + G_2(x^\lambda(t))\,u^\lambda(t)\right)\,dt,
\label{eq:endpoint_map}
\end{equation}
which associates to each $\lambda$ the terminal value of the cyclic variable induced by the corresponding reduced extremal.
The fixed terminal condition $y(T)=y_T$ therefore gives rise to the implicit equation
\begin{equation}
\mathcal{Y}_T(\lambda) = y_T.
\label{eq:lambda_equation}
\end{equation}
In contrast to the free-final case, where $\lambda=0$ follows directly from the transversality condition, the multiplier $\lambda$ must now be computed from the implicit relation \eqref{eq:lambda_equation}. Existence of at least one solution
$\lambda=\lambda_T$, depending on the horizon length $T$, is guaranteed by the existence of a solution of the full problem. The uniqueness requires additional assumptions. In particular, if the optimal solution $(x(\cdot),y(\cdot))$ is a regular trajectory joining $(x(0), y(0))$ to $(x(T),y(T))$ then the corresponding adjoint is unique. The regularity is understood in the following sense that the differential of the endpoint mapping $dE(u)$ is surjective, where the endpoint mapping $E$ is defined by
\[
\begin{aligned}
E:~& L^\infty([0,T]) \rightarrow \R^n\times \R^p \\
& u(\cdot) \mapsto (x(T), y(T)),
\end{aligned}
\]
$(x(T), y(T))$ being the value at $T$ of $(x(\cdot),y(\cdot))$ solution of the control system \eqref{eq:OCP} associated with control $u(\cdot)$. If the control system obtained from linearization along $(x(\cdot),y(\cdot))$ and $u(\cdot)$ is controllable at time $T$, then $(x(\cdot),y(\cdot))$ is regular, see \cite{trelat2024control} for more details. Notice that singular optimal solutions are very rare, which can be understood in the following sense. For a fixed cost and generic affine control system, there is no singular minimizing solutions \cite{Chitour08}. 

In the next section, we show that for such a choice of $\lambda_T$, the corresponding solution of the full optimal control problem exhibits an exponential trim
turnpike property. In particular, the dependence of $\lambda_T$ on $T$ does not destroy the exponential convergence of optimal trajectories through a distinguished trim, up to boundary layers near the initial and terminal times.

\section{Exponential trim turnpike theorem}
\label{sec:turnpike}

We now state the main result of the paper. It establishes an exponential trim turnpike property for optimal control problems with cyclic variables and boundary conditions,
based on the reduced optimal control problem $\mathrm{ROCP}(\lambda)$ introduced in Section~\ref{sec:PMP_RBVP_ROCP}.

\medskip
\noindent \textbf{Assumptions.} We assume that:
\begin{itemize}
\item[(A1)] Equation \eqref{eq:lambda_equation} admits a unique solution denoted $\bar \lambda$.
\item[(A2)] There exists a unique minimizer $(\bar x, \bar p_x, \bar u)$ of the static problem \eqref{eq:static_ROCP} associated to the adjoint multiplier $\bar\lambda$. 
\item[(A3)] The stationary pair $(\bar x,\bar p_x)$ of the reduced Hamiltonian system \eqref{eq:RBVP} is \emph{hyperbolic}, i.e.~the Jacobian of the reduced Hamiltonian vector field at $(\bar x,\bar p_x)$ has no eigenvalues on the imaginary axis.\footnote{More precision about the hyperbolicity condition can be found in \cite{Flasskamp2025, Tre:23, TreZua2015}}.
\end{itemize}

\medskip
\begin{remark}
\label{rk:static_solution}
As already mention above, assumption (A1) is guaranteed by surjectivity of $dE(u)$. Condition (A2) can be weakened. It is sufficient to assume the existence of isolated solutions, or more concretely the existence of a unique solution satisfying the hyperbolicity condition, to obtain a \emph{local} turnpike property. This point will be discussed in more detail in Example~2. See also \cite{TreZua2015} for more details about this situation.
\end{remark}

\medskip
\begin{theorem}[Exponential trim turnpike with boundary conditions]
\label{thm:exp_trim_turnpike}
Let assumptions \emph{(A1)--(A3)} hold. Then for all sufficiently large horizons $T>0$, there exist positive constants $C,\mu, \varepsilon>0$ such that the following holds.

Let $(x_T(\cdot),y_T(\cdot),u_T(\cdot))$ be an optimal solution of the full problem \eqref{eq:OCP}. If 
\[
\| \bar x - x_0\| + \|\bar x - x_T\| \leq \varepsilon,
\]
then there exists $\bar y_{T/2}\in\mathbb{R}^p$ such that for the \emph{trim turnpike} $(\bar x, \bar y(\cdot), \bar u)$ associated with $\bar y_{T/2}$ i.e.~with $y(\cdot)$ satisfying
\begin{equation}
\dot{\bar y}_T(t) = g_1(\bar x) + G_2(\bar x)\,\bar u,
\qquad \bar y_T(T/2) = \bar y_{T/2},
\label{eq:trim_2}
\end{equation}
the optimal trajectory satisfies
\begin{equation}
\|x_T(t)-\bar x\| + \|u_T(t)-\bar u\|
\le C\left(e^{-\mu t}+e^{-\mu (T-t)}\right),
\qquad t\in[0,T],
\label{eq:turnpike_xu}
\end{equation}
and
\begin{equation}
\|y_T(t)-\bar y_T(t)\|
\le C\left(e^{-\mu t}+e^{-\mu (T-t)}\right),
\qquad t\in[0,T].
\label{eq:turnpike_y}
\end{equation}
\end{theorem}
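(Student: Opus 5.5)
We will work in the reduced picture provided by Proposition~\ref{prop:ROCP_RBVP}. Let $(x_T,y_T,u_T)$ be optimal and normal, with cyclic multiplier $p_y\equiv\lambda_T$. By construction $\lambda_T$ solves \eqref{eq:lambda_equation}, so assumption (A1) gives $\lambda_T=\bar\lambda$ for every large $T$. Once this is fixed, $(x_T,p_{x,T})$ solves the autonomous RBVP \eqref{eq:RBVP} with the fixed parameter $\bar\lambda$, and $u_T=u^\star$ is given by \eqref{eq:u_star}. By Proposition~\ref{prop:ROCP_RBVP}, the triple $(x_T,p_{x,T},u_T)$ is an extremal of $\mathrm{ROCP}(\bar\lambda)$. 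This is a classical optimal control problem with fixed endpoints, no cyclic variable, and a $T$-independent Hamiltonian.

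\textbf{Step 1: static optimizer is an equilibrium.} We show that the static optimizer $(\bar x,\bar p_x,\bar u)$ of (A2) is an equilibrium of the reduced Hamiltonian vector field. This is the usual Lagrange-multiplier computation: the KKT conditions of \eqref{eq:static_ROCP} with multiplier $\bar p_x$ are $\nabla_u H^{\bar\lambda}=0$, $\nabla_x H^{\bar\lambda}=0$ and $f_1+F_2\bar u=0$. These are exactly the right-hand sides of \eqref{eq:RBVP} vanishing at $(\bar x,\bar p_x)$.

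\textbf{Step 2: exponential turnpike for $(x,u)$.} By (A3), the equilibrium is hyperbolic, and since the system is Hamiltonian its spectrum is symmetric with respect to the imaginary axis. Hence there are local stable and unstable manifolds $W^s,W^u$ of dimension $n$ at $(\bar x,\bar p_x)$. We then follow the argument of \cite{TreZua2015} (see also \cite{Flasskamp2025}). For $\varepsilon$ small and $T$ large, we solve the two-point problem by a shooting-type fixed point in the dichotomy decomposition. We seek a solution of the form
\[
(x,p_x)(t)=(\bar x,\bar p_x)+\text{(stable part from } x_0\text{ at }t=0)+\text{(unstable part from } x_T\text{ at }t=T)+\text{small remainder}.
\]
The required transversality is that $W^s$ and $W^u$ be graphs over the $x$-directions. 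Near the equilibrium this is guaranteed by the hyperbolicity, which in the Hamiltonian setting yields the Riccati graph structure. Exponential dichotomy estimates with rate $\mu$ below the spectral gap then give
\[
\|x_T(t)-\bar x\|+\|p_{x,T}(t)-\bar p_x\|\le C\bigl(e^{-\mu t}+e^{-\mu(T-t)}\bigr).
\]
By \eqref{eq:u_star}, $u^\star$ is a $C^1$ function of $(x,p_x)$ for fixed $\bar\lambda$, so the same bound holds for $u_T$ and gives \eqref{eq:turnpike_xu}.

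To identify this locally constructed solution with the optimal one, we use the uniqueness in (A1)–(A2). Either the optimal extremal is the unique small-norm solution of the BVP, or one invokes the local character of the statement, as in Remark~\ref{rk:static_solution}.

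\textbf{Step 3 (main obstacle): the cyclic variable.} Set $\bar v=g_1(\bar x)+G_2(\bar x)\bar u$ and define the mismatch
\[
w(t)=g_1(x_T)+G_2(x_T)u_T-\bar v .
\]
By the Lipschitz property and Step 2, $\|w(t)\|\le C'(e^{-\mu t}+e^{-\mu(T-t)})$, and therefore $\int_0^T\|w\|\le 2C'/\mu$ uniformly in $T$. A naive choice of anchor such as $\bar y_{T/2}=y_T(T/2)$ fails: the deviation $y_T(t)-\bar y_T(t)=\int_{T/2}^t w$ does not decay at the endpoints. It tends to the constants $\int_{T/2}^{0}w$ and $\int_{T/2}^{T}w$, which are bounded but not small.

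The claimed estimate therefore has to be read with boundary-layer constants, which is possible because the bound $C(e^{-\mu t}+e^{-\mu(T-t)})$ only needs to be small in the middle. Concretely, we take $\bar y_{T/2}=y_T(T/2)$. Then for $t\le T/2$,
\[
\|y_T(t)-\bar y_T(t)\|\le\int_t^{T/2}\|w\|\le C'\mu^{-1}\bigl(e^{-\mu t}+e^{-\mu T/2}\bigr)\le 2C'\mu^{-1}e^{-\mu t}.
\]
The case $t\ge T/2$ is symmetric. This yields \eqref{eq:turnpike_y} with $C$ enlarged.

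The remaining delicate point is the uniformity in $T$ of the constants in Step 2. It rests on $\lambda_T$ being exactly $\bar\lambda$ and on the dichotomy constants depending only on the fixed equilibrium. If instead $\lambda_T$ only converges to $\bar\lambda$, we would add a perturbation argument with $\|\lambda_T-\bar\lambda\|=O(e^{-\mu T/2})$, absorbed into the same estimates.
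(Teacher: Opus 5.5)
Your proof follows the paper's argument essentially step for step: reduction to \eqref{eq:RBVP} with $\lambda_T=\bar\lambda$ via Proposition~\ref{prop:ROCP_RBVP}, the hyperbolic-equilibrium turnpike of \cite{TreZua2015} for $(x,p_x)$, transfer of the bound to $u$ through \eqref{eq:u_star}, and integration of the cyclic mismatch from the midpoint anchor $\bar y_{T/2}=y_T(T/2)$, which is exactly the paper's anchor (so your remark that this anchor ``fails'' is moot, as your own computation shows). Drop the uniformity in $T$ claimed in your last paragraph: $\mathcal{Y}_T$, and hence $\bar\lambda=\lambda_T$, depends on $T$ (in Example~1, $\lambda_T=2(3-y_T)/(T-2)$ is neither constant nor exponentially close to its limit), and the paper accordingly lets $C,\mu$ depend on $\lambda_T$, which the statement allows because the constants are chosen after $T$.
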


\begin{proof}
Fix $T>0$ sufficiently large. Let $\lambda_T$ be a solution of the endpoint equation \eqref{eq:lambda_equation}. By Proposition~\ref{prop:ROCP_RBVP}, the optimal solution $(x_T(\cdot),y_T(\cdot),u_T(\cdot))$ of the full problem \eqref{eq:OCP} corresponds to a solution $(x_T(\cdot),p_{x,T}(\cdot))$ of the reduced boundary value problem \eqref{eq:RBVP} with parameter $\lambda=\lambda_T$, and the optimal
control is given by the feedback law
\[
u_T(t) = -R^{-1}\left(F_2(x_T(t))^\top p_{x,T}(t) + G_2(x_T(t))^\top \lambda_T\right).
\]
By assumption \emph{(A3)}, there exist $\bar\lambda\in\mathbb{R}^p$ and a unique minimizer $(\bar x,\bar u)$ of the static problem \eqref{eq:static_ROCP} 
with the property $\bar\lambda=\lambda_T$, together with a corresponding adjoint $\bar p_x$, such that
$(\bar x, \bar p_x)$ is a hyperbolic equilibrium of the reduced Hamiltonian vector field associated with \eqref{eq:RBVP}. In particular, the linearization of the reduced Hamiltonian system at $(\bar x, \bar p_x)$ has no eigenvalues on the imaginary axis.

Classical exponential turnpike theorem \cite{TreZua2015} then implies that, for all sufficiently large $T$, extremals of $\mathrm{ROCP}(\lambda)$ with boundary conditions $x(0)=x_0$ and $x(T)=x_T$ remain exponentially close to the equilibrium $(\bar x,\bar p_x)$ on compact sub-intervals of $(0,T)$, with deviations confined to boundary layers near $t=0$ and $t=T$. As a consequence, there exist constants $C>0$ and $\mu >0$ such that
\[
\|x_T(t)-\bar x\| + \|p_{x,T}(t)-\bar p_x\|
\le C\left(e^{-\mu t}+e^{-\mu(T-t)}\right),
\qquad t\in[0,T].
\]
Notice that both constants $C, \mu$ depend on $\bar \lambda$. Since the optimal control is a $C^2$ function of $(x,p_x,\lambda)$ as follows from~\eqref{eq:u_opt}, this estimate immediately yields the exponential bound \eqref{eq:turnpike_xu} for $(x_T(\cdot),u_T(\cdot))$.

For the cyclic variable, define the deviation
\[
\delta y(t) := y_T(t) - \bar y_T(t),\qquad t\in[0,T],
\]
where $\bar y_T(\cdot)$ is the trim trajectory defined by
\begin{equation}
\dot{\bar y}_T(t) = g_1(\bar x) + G_2(\bar x)\bar u,
\qquad
\bar y_T(T/2)=\bar y_{T/2}.
\label{eq:trim_def}
\end{equation}
Using the dynamics of the cyclic variable and \eqref{eq:trim_def}, we obtain
\begin{equation}
\begin{aligned}
\delta \dot y(t)
&= \dot y_T(t) - \dot{\bar y}_T(t) \\
&= \left(g_1(x_T(t)) + G_2(x_T(t))u_T(t)\right)
   - \left(g_1(\bar x) + G_2(\bar x)\bar u\right) \\
&= \underbrace{g_1(x_T(t)) - g_1(\bar x)}_{=: \Delta_1(t)}
 +\underbrace{\left(G_2(x_T(t))u_T(t) - G_2(\bar x)\bar u\right)}_{=: \Delta_2(t)}.
\end{aligned}
\label{eq:delta_y_dot}
\end{equation}
We next bound the right-hand side by exploiting smoothness and the exponential turnpike estimate for $(x_T,u_T)$. Since $g_1$ and $G_2$ are $C^1$, they are locally Lipschitz on
a neighborhood containing the turnpike region; hence there exists $L>0$ such that
\begin{equation}
\|\Delta_1(t)\| \le L \|x_T(t)-\bar x\|.
\label{eq:Delta1_bound}
\end{equation}
For $\Delta_2(t)$, by adding and subtracting $G_2(x_T(t))\bar u$ one gets:
\begin{equation}
\begin{aligned}
\Delta_2(t)
&= G_2(x_T(t))u_T(t) - G_2(\bar x)\bar u \\
&= G_2(x_T(t))(u_T(t)-\bar u) + \left(G_2(x_T(t))-G_2(\bar x)\right)\bar u.
\end{aligned}
\label{eq:Delta2_split}
\end{equation}
Therefore, using local boundedness of $G_2$ and Lipschitz continuity of $G_2$,
there exist constants $M_G,L_G>0$ (independent of $T$) such that
\begin{equation}
\|\Delta_2(t)\|
\le M_G\|u_T(t)-\bar u\| + L_G\|\bar u\|\|x_T(t)-\bar x\|.
\label{eq:Delta2_bound}
\end{equation}
Combining \eqref{eq:delta_y_dot}--\eqref{eq:Delta2_bound} yields
\begin{equation}
\|\delta \dot y(t)\|
\le C_0\left(\|x_T(t)-\bar x\| + \|u_T(t)-\bar u\|\right),
\label{eq:delta_y_dot_bound}
\end{equation}
for some constant $C_0>0$. By the exponential turnpike estimate \eqref{eq:turnpike_xu},
there exist $C_1>0$ and $\mu>0$ such that
\begin{equation}
\|\delta \dot y(t)\|
\le C_1\left(e^{-\mu t}+e^{-\mu(T-t)}\right),
\qquad t\in[0,T].
\label{eq:delta_y_dot_exp}
\end{equation}

Now, integrating \eqref{eq:delta_y_dot_exp} on $[T/2,t]$ for $t\in[T/2,T]$ gives
\begin{equation}
\begin{aligned}
\|\delta y(t)\|
&\le \|\delta y(T/2)\| + \int_{T/2}^{t}\|\delta \dot y(s)\|\,ds \\
&\le \|\delta y(T/2)\|
 + C_1\int_{T/2}^{t}\left(e^{-\mu s}+e^{-\mu(T-s)}\right)\,ds.
\end{aligned}
\label{eq:delta_y_integral}
\end{equation}
A direct computation yields, for $t\in[T/2,T]$,
\begin{equation}
\begin{aligned}
&\int_{T/2}^{t} e^{-\mu s}\,ds
= \frac{1}{\mu}\left(e^{-\mu T/2}-e^{-\mu t}\right)
\le \frac{1}{\mu}e^{-\mu t},
\qquad \\
&\int_{T/2}^{t} e^{-\mu(T-s)}\,ds
= \frac{1}{\mu}\left(e^{-\mu(T-t)}-e^{-\mu T/2}\right)
\le \frac{1}{\mu}e^{-\mu(T-t)}.
\label{eq:integrals_bounds}
\end{aligned}
\end{equation}
Inserting \eqref{eq:integrals_bounds} into \eqref{eq:delta_y_integral} yields
\begin{equation}
\|\delta y(t)\|
\le \|\delta y(T/2)\| + \frac{C_1}{\mu}\left(e^{-\mu t}+e^{-\mu(T-t)}\right),
\qquad t\in[T/2,T].
\label{eq:delta_y_bound_right}
\end{equation}
An analogous integration on $[t,T/2]$ gives the same bound for $t\in[0,T/2]$. Hence,
for all $t\in[0,T]$,
\begin{equation}
\|\delta y(t)\|
\le \|\delta y(T/2)\| + C_2\left(e^{-\mu t}+e^{-\mu(T-t)}\right),
\label{eq:delta_y_bound_all}
\end{equation}
for a constant $C_2>0$ (depending on $\lambda_T$, i.e.~implicitly on $T$).

Finally, by choosing the midpoint anchor $\bar y_{T/2}$ in \eqref{eq:trim_def} as
\[
\bar y_{T/2} := y_T(T/2),
\]
yields $\delta y(T/2)=0$ and therefore
\[
\|y_T(t)-\bar y_T(t)\|
\le C_2\left(e^{-\mu t}+e^{-\mu(T-t)}\right),
\qquad t\in[0,T],
\]
which is precisely the exponential trim estimate for the cyclic variable.
\end{proof}

\begin{remark}
Notice that the proof of Theorem~\ref{thm:exp_trim_turnpike} relies on the results of the turnpike theorem from \cite{TreZua2015} applied to extremals of $\mathrm{ROCP}(\lambda)$ and not to optimal solutions. This is valid because the proof of the theorem is based on the properties of solutions of Hamiltonian systems near hyperbolic equilibria, which is relevant to a general extremal, the optimality of such extremal in case of $\mathrm{ROCP}(\lambda)$ follows from the uniqueness of the solution of \eqref{eq:RBVP} when $x_0, x_T$ are close enough to the fixed point $\bar x$.
\end{remark}

\medskip
\begin{remark}
The key difference between the present proof and that carried out in \cite{Flasskamp2015}, where free-final conditions on the cyclic variables were considered, lies in the construction of the reduced problem. In the free-final conditions case, the reduction follows directly from the transversality conditions, which imply that the adjoint variable associated with the cyclic component vanishes. In contrast, when terminal conditions are imposed on the cyclic variables, the corresponding adjoint multiplier is generally nonzero and acts as a parameter in the reduced dynamics. The main intuition is that, if an exponential trim turnpike exists in the free-final case, then a similar structure should persist when terminal conditions are fixed. The essential modification is that the trim trajectory is shifted according to the value of the cyclic multiplier, leading to a family of trims parametrized by this constant adjoint.
\end{remark}

\section{Examples}
\label{sec:applications}

This section illustrates the exponential trim turnpike theorem in three representative examples i.e. 1) a linear-quadratic (LQ) problem exhibiting a velocity turnpike (see \cite{Faulwasser2021}), which was also analyzed in 
\cite{Tre:23} and \cite{Dario2021}; 2) a fully nonlinear example from \cite{Torres:04}; and 3) the Kepler problem considered in \cite{Flasskamp2025}. 

\subsection{Example 1: Linear-quadratic case}
\label{subsec:ex_LQ_linear_turnpike}

We consider the following problem
\begin{equation}\label{eq:ex1_ocp}
\left\{
\begin{aligned}
\min_{u(\cdot)}\quad & J_T(x,u) := \int_0^{T} \left(x^2(t)+u^2(t)\right)\,dt,\\
& \dot x(t)=u(t),\qquad x(0)=1,\quad x(T)=2,\\
& \dot y(t)=x(t),\qquad y(0)=0,\quad y(T)=y_T,
\end{aligned}
\right.
\end{equation}
where $T>0$ and $\alpha\in\mathbb{R}$ are given. Here $x\in\mathbb{R}$ is the non-cyclic state, $y\in\mathbb{R}$ is cyclic, and $u\in\mathbb{R}$. The dynamics match \eqref{eq:OCP} with
\[
f_1(x)=0,\qquad F_2(x)=1,\qquad g_1(x)=x,\qquad G_2(x)=0,
\]
and the running cost is $f^0(x)=x^2$ with $R=1$ (in the notation of \eqref{eq:OCP}).

Accoding to the PMP, one has
\[
H(x,y,p_x,p_y,u)=x^2+u^2+p_x u+p_y x.
\]
The minimization condition leads to
\begin{equation}\label{eq:ex1_u_star}
2u^\star(t)+p_x(t)=0
\qquad\text{i.e.} \qquad
u^\star(t)=-\frac{1}{2}p_x(t).
\end{equation}
The adjoint equation is
\begin{equation}\label{eq:ex1_pxdot}
\dot p_x(t)=-(2x(t)+p_y), \qquad \dot p_y(t) = 0.
\end{equation}
Setting $p_y = \lambda \in \R$ and combining $\dot x=u^\star$ and \eqref{eq:ex1_pxdot} yields the closed second-order ODE
\begin{equation}\label{eq:ex1_x_secondorder}
\ddot x(t)-x(t)=\frac{\lambda}{2}.
\end{equation}
A convenient representation is
\begin{equation}\label{eq:ex1_x_general}
x(t)= a\ e^t + b\ e^{-t} -\frac{\lambda}{2},
\qquad
u(t )= \dot x(t) = a\ e^t - b\ e^{-t}.
\end{equation}
The boundary conditions $x(0)=1$ and $x(T)=2$ imply
\begin{equation}
\label{eq:ex1_a}
a + b = 1 + \frac{\lambda}{2},
\qquad a\ e^T + b\ e^{-T} -\frac{\lambda}{2} = 2.
\end{equation}
Finally, the constraint $y(T) = y_T$ is equivalent to
\begin{equation}
\label{eq:ex1_integral_constraint}
\int_0^T x(t)\,dt = y_T,
\end{equation}
since $y(T)=y(0)+\int_0^T x(t)\,dt$ and $y(0)=0$. By integration we obtain 
\begin{equation}
\label{eq:y_1_T}
a\ e^T - b\ e^{-T} - a + b - \frac{\lambda T}{2} = y_T.  
\end{equation}
Solving \eqref{eq:ex1_a}--\eqref{eq:ex1_integral_constraint} yields explicit expressions for $(a, b,\lambda)$, given by
\begin{equation}
\label{eq:ex1_lambda_b}
\lambda_T = \frac{2(3-y_T)}{T-2}, \quad a_T = \left(2+\frac{\lambda_T}{2}\right)e^{-T}, \quad
b_T = 1 + \frac{\lambda_T}{2}.  
\end{equation}
The resulting optimal pair $(x_T,u_T)$ is then given by \eqref{eq:ex1_x_general} with $a = a_T$ and $b = b_T$.

\begin{remark}
The expression in \eqref{eq:ex1_lambda_b} is obtained by neglecting, on the one hand, $a$ and $b\ e^{-T}$ in the left, respectively the right equation in \eqref{eq:ex1_a}, and, on the other hand, by neglecting $b\ e^{-T} + a$ in \eqref{eq:y_1_T}. This  is reasonable since $T$ is assumed to be large enough.  
\end{remark} 

\medskip
\noindent \emph{Reduced and static problems.}
Fix $\lambda\in\mathbb{R}$. In the notation of Section~\ref{sec:PMP_RBVP_ROCP}, the reduced optimal control problem $\mathrm{ROCP}(\lambda)$ for this example reads
\[
\min_{u(\cdot)}\ \int_0^T\left(x^2(t)+\lambda x(t)+u^2(t)\right)\,dt
\quad\text{s.t.}\quad \dot x(t)=u(t),\ \ x(0)=1,\ x(T)=2,
\]
since $g_1(x)=x$ and $G_2\equiv 0$.
Its associated static problem \eqref{eq:static_ROCP} is
\begin{equation}\label{eq:ex1_static}
\min_{(x,u)}\ \ x^2+\lambda x+u^2
\qquad\text{s.t.}\qquad u = 0.
\end{equation}
Hence, the static optimizer is 
\begin{equation}\label{eq:ex1_static_opt}
\bar u=0,
\qquad
\bar x=-\frac{\lambda}{2}.
\end{equation}
In particular, taking $\lambda=\lambda_T$ from \eqref{eq:ex1_lambda_b}, the reference steady pair becomes
\[
(\bar x_T,\bar u_T)=\left(-\frac{\lambda_T}{2},\,0\right).
\]

\medskip
\noindent \emph{Exponential turnpike estimate for $(x_T,u_T)$.}
Define the deviation from the static reference:
\[
\delta x(t):=x_T(t)-\bar x_T=x_T(t)+\frac{\lambda_T}{2}.
\]
From \eqref{eq:ex1_x_general}, we have the exact representation
\[
\delta x(t) = a_T \ e^t - b_T\ e^{-t},
\qquad
u_T(t) = a_T \ e^t - b_T\ e^{-t}.
\]
In particular, for $T$ sufficiently large the coefficients satisfy $|a_{T}|\lesssim e^{-T}$ and $|b_{T}|\lesssim 1$ (and symmetrically when rewriting from the terminal side), which yields the standard two-sided exponential estimate: there exist constants $C,\mu>0$, independent from $T$, such that 
\begin{equation}
\label{eq:ex1_turnpike_xu}
|x_T(t)-\bar x_T|+|u_T(t)-\bar u_T|
\leq C\left(e^{-\mu t}+e^{-\mu(T-t)}\right),
\qquad t\in [0,T].
\end{equation}

\medskip
\noindent \emph{Cyclic variable and trim.}
Define the trim trajectory $\bar y_T(\cdot)$ by the midpoint anchoring mechanism:
\begin{equation}\label{eq:ex1_trim_y}
\dot{\bar y}_T(t)=\bar x_T,
\qquad
\bar y_T(T/2)=\bar y_{T/2}.
\end{equation}
Let $\delta y(t):=y_T(t)-\bar y_T(t)$. Since $\dot y_T=x_T$ and $\dot{\bar y}_T=\bar x_T$,
we have
\[
\delta\dot y(t)=x_T(t)-\bar x_T.
\]
Integrating from $T/2$, for all $t\in[T/2,T]$ (resp. until $T/2$, for all $t\in[0,T/2]$) and using \eqref{eq:ex1_turnpike_xu} yields, 
\[
|\delta y(t)| \le |\delta y(T/2)| + C\left(e^{-\mu t}+e^{-\mu(T-t)}\right).
\]
Choosing the anchor as $\bar y_{T/2} =y_T(T/2)$ gives $\delta y(T/2)=0$ and therefore
\begin{equation}\label{eq:ex1_turnpike_y}
|y_T(t)-\bar y_T(t)|
\le C\left(e^{-\mu t}+e^{-\mu(T-t)}\right),
\qquad t\in[0,T],
\end{equation}
which is exactly the exponential trim estimate for the cyclic variable. The cyclic component follows a trim $\bar y_T$ with constant velocity $\bar x_T$, and the midpoint anchoring removes the only free integration constant, yielding the exponential estimate \eqref{eq:ex1_turnpike_y}. Notice that the obtained trim turnpike is different from the linear turnpike trajectory in \cite{Tre:23}. This is what allows us to obtain an exponential convergence as illustrated in Figures~\ref{fig:LQ-1} and \ref{fig:LQ-2}.

\begin{figure}[ht]
\centering
\def\size{0.3}
\def\sizeh{0.1}
\includegraphics[width=\size\textwidth]{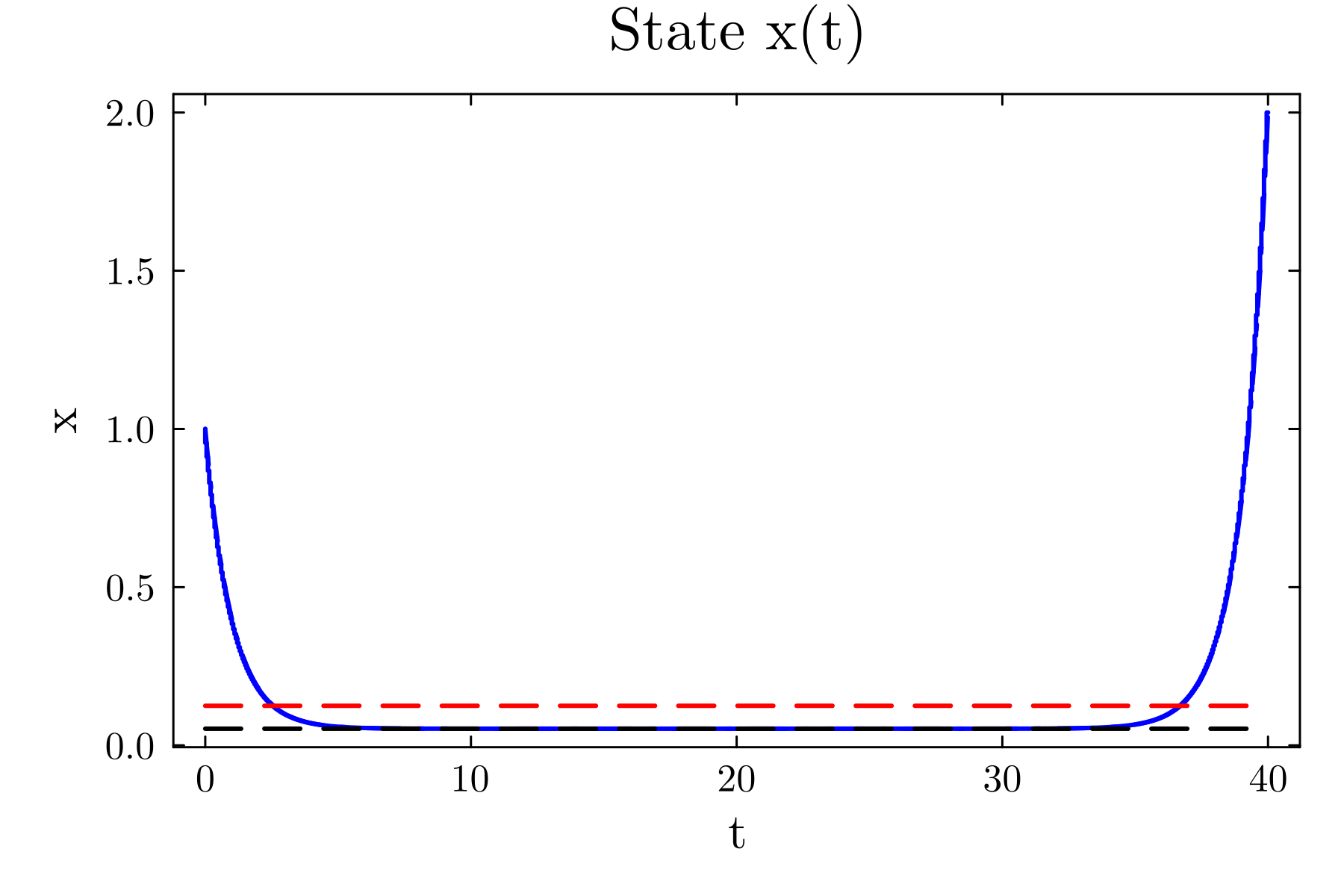}
\hspace{\sizeh cm}
\includegraphics[width=\size\textwidth]{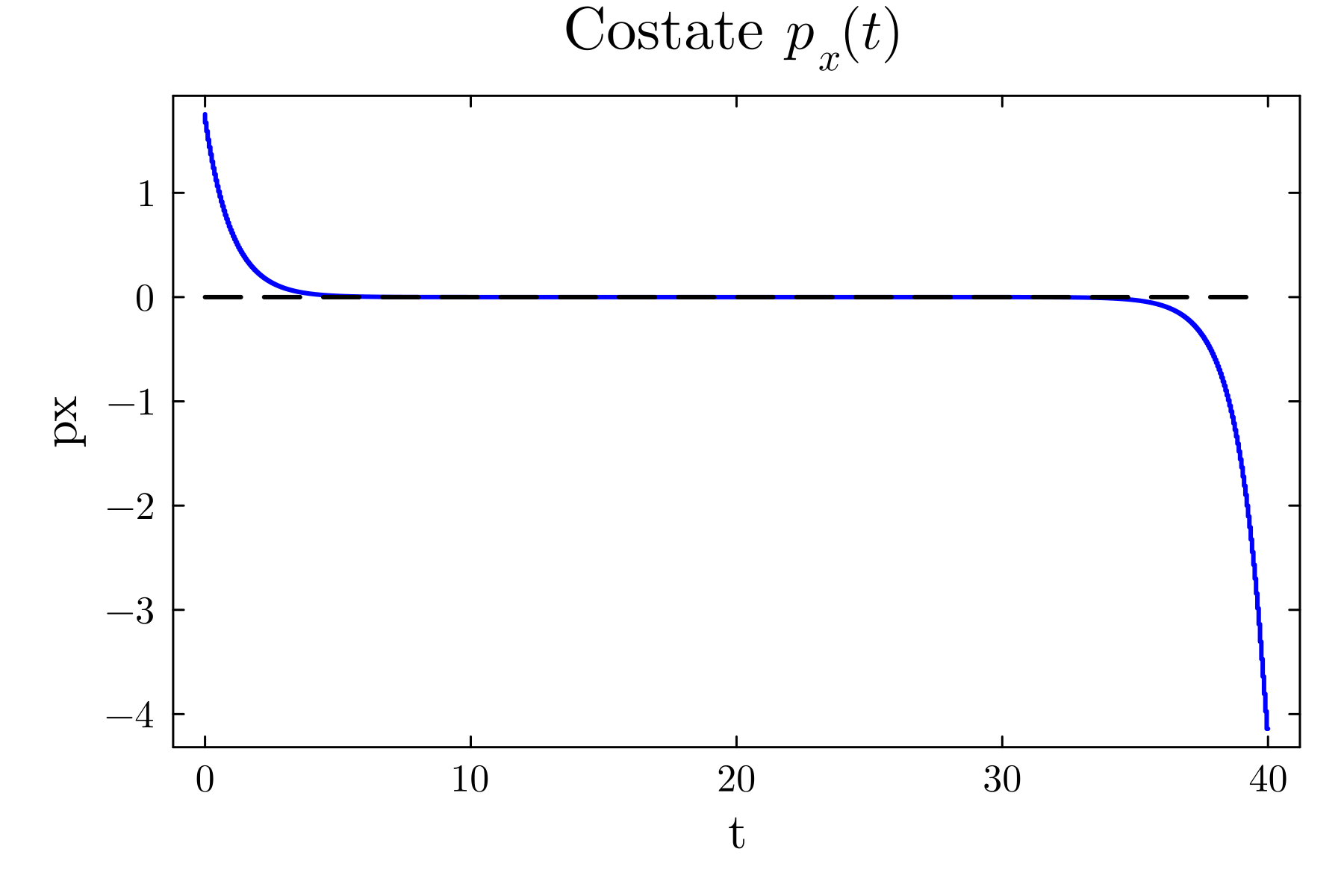}
\hspace{\sizeh cm}
\includegraphics[width=\size\textwidth]{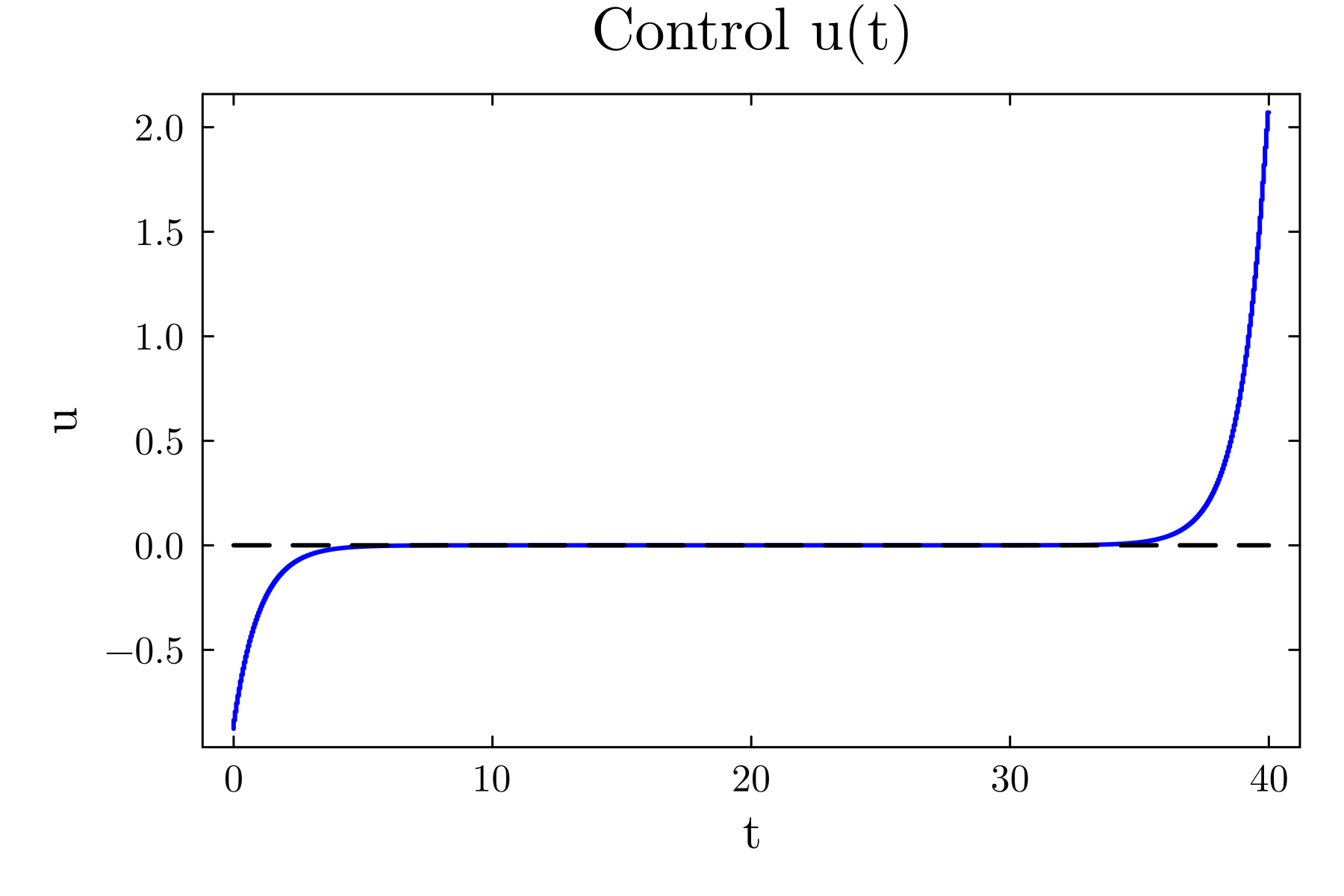}
\hspace{\sizeh cm}
\includegraphics[width=\size\textwidth]{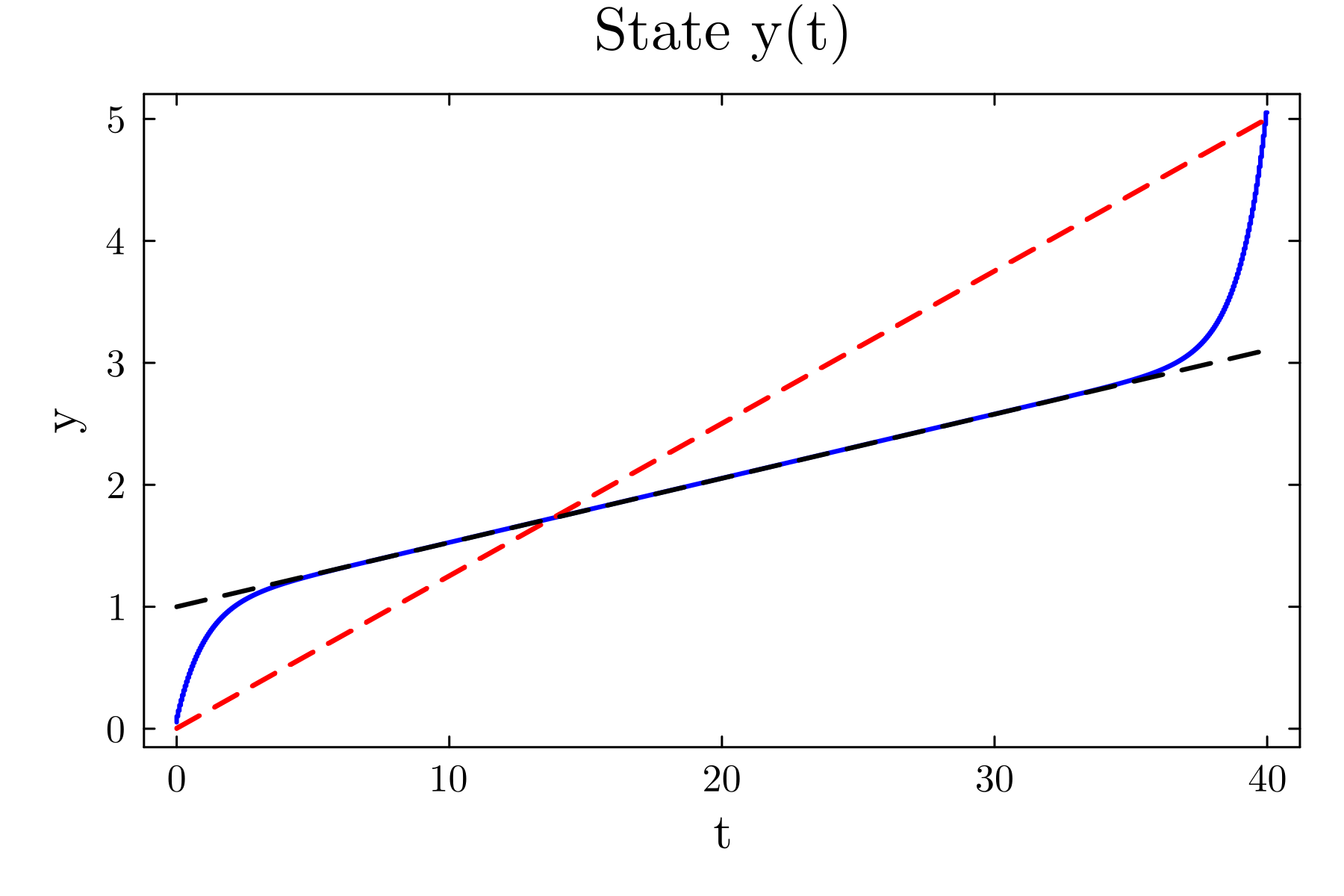}
\hspace{\sizeh cm}
\includegraphics[width=\size\textwidth]{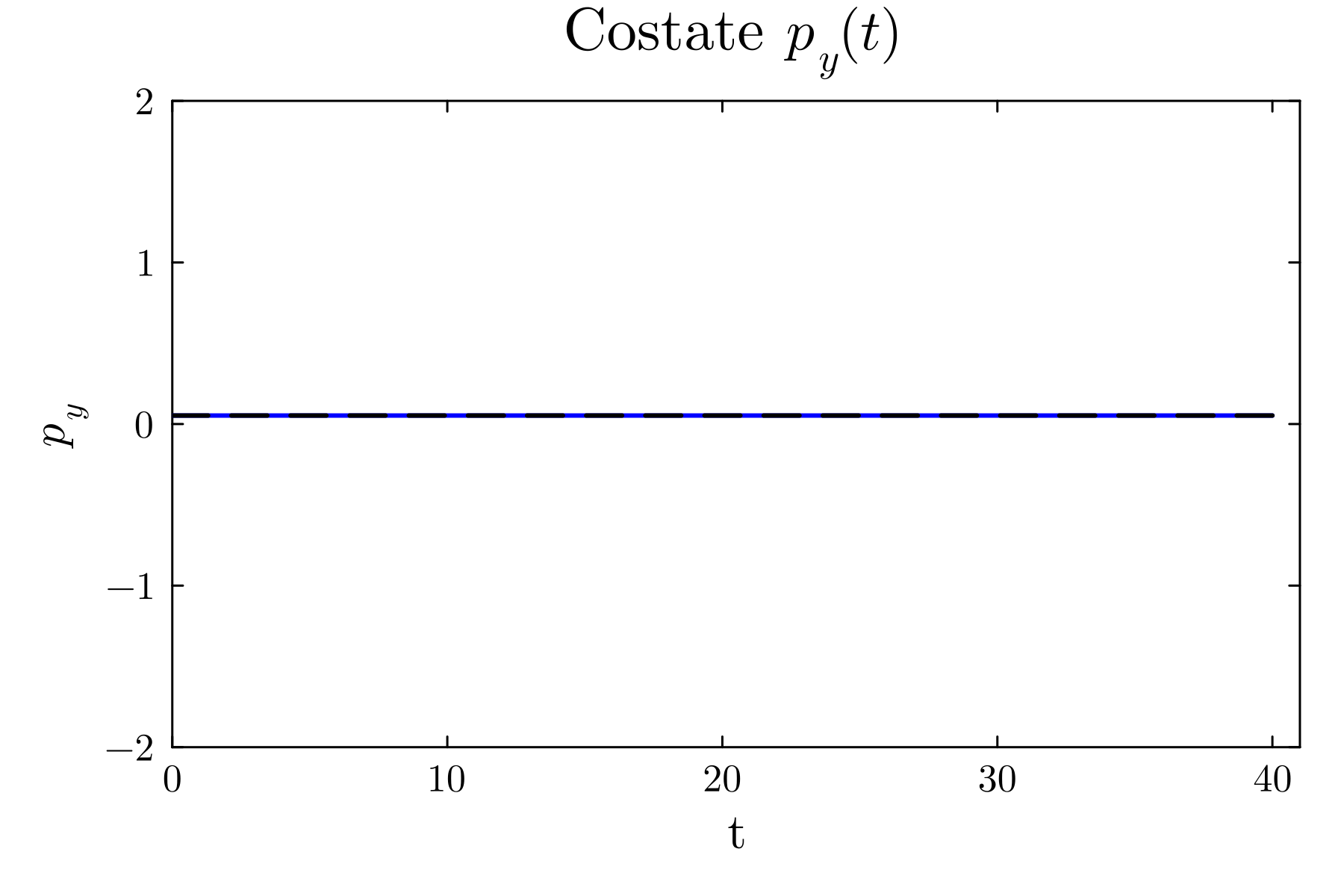}
\includegraphics[width=\size\textwidth]{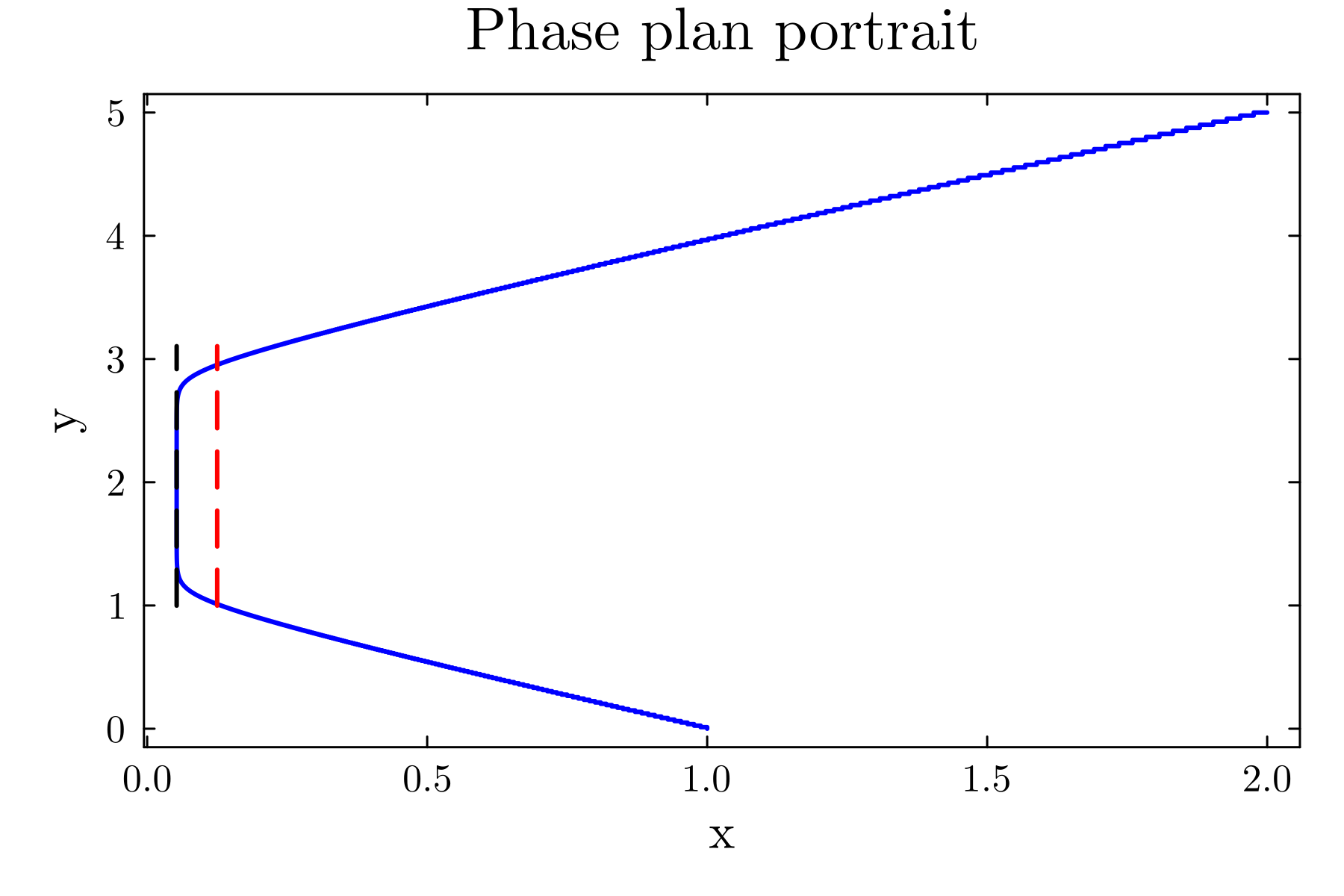}
\vspace{-0.cm}
\caption{Example 1: Turnpike property depicted for the state, the co-state and the control and also the phase portrait in the plane. The solution is represented in blue, in dash black is the trim turnpike and in red the linear turnpike trajectory obtained in \cite{Tre:23}.} 
\label{fig:LQ-1}
\end{figure}

\def\sizeFig{0.46}
\begin{figure}[ht!]
\centering
\includegraphics[width=\sizeFig\textwidth]{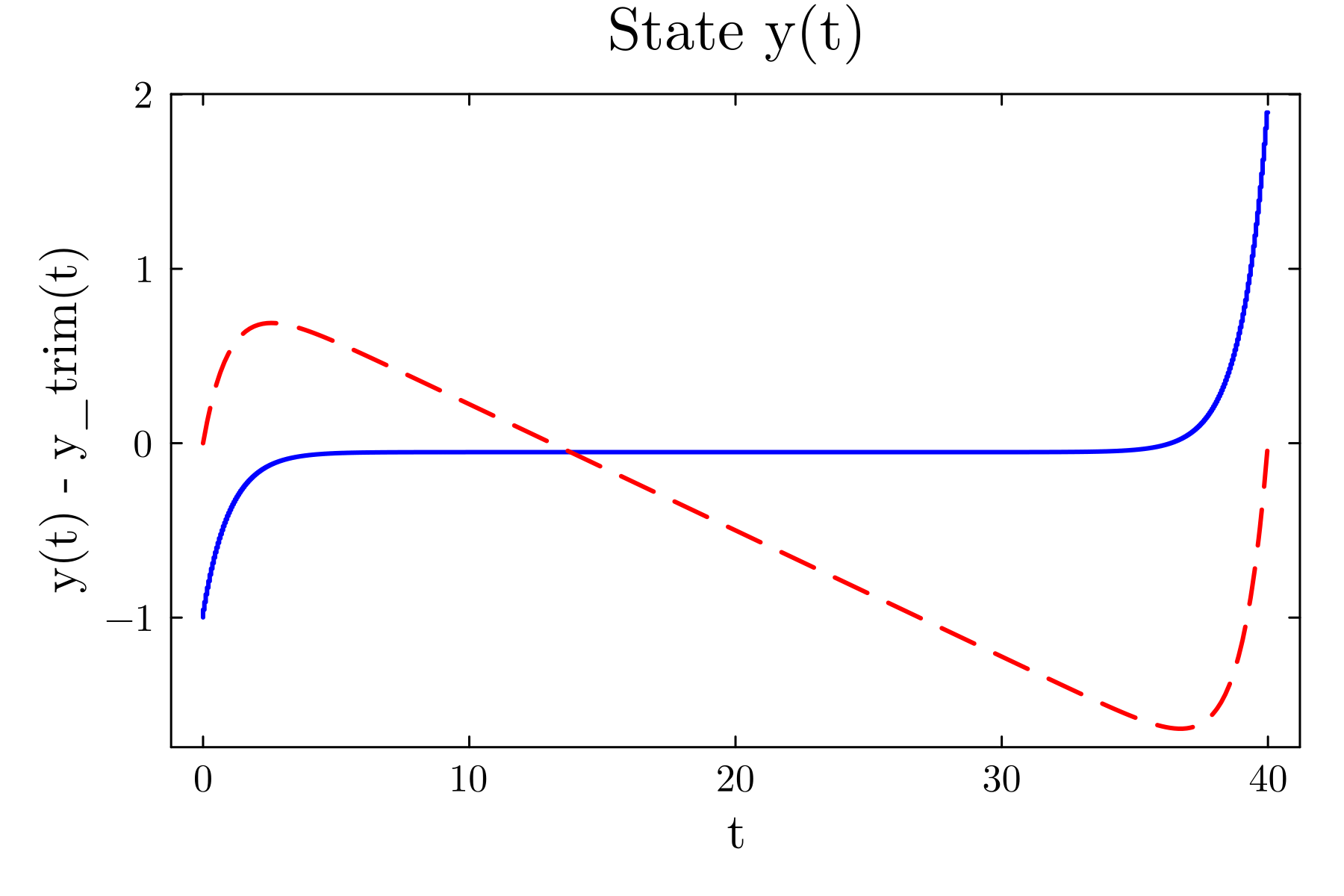}
\hspace{0.5cm}
\includegraphics[width=\sizeFig\textwidth]{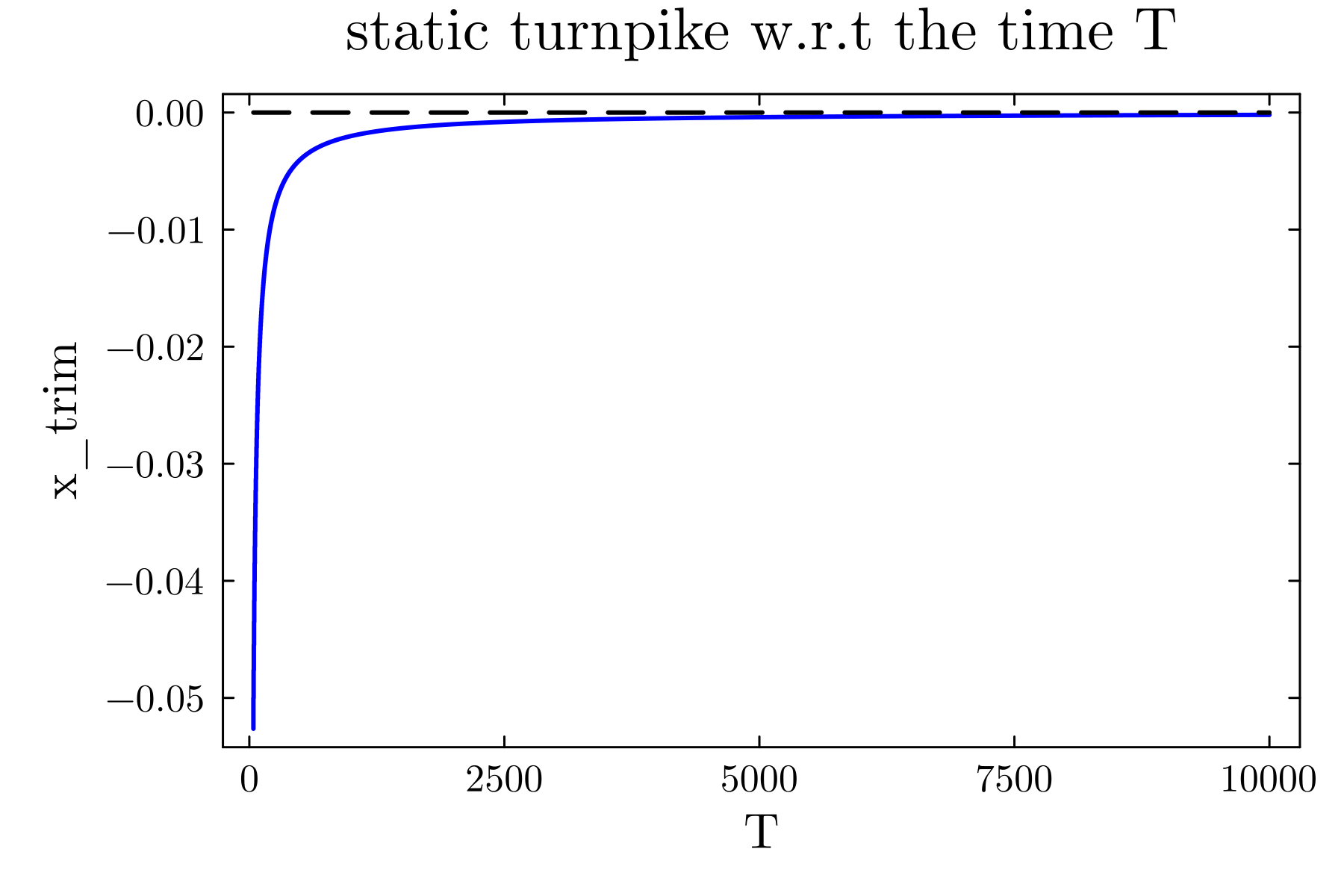}
\vspace{-0.1cm}
\caption{Example 1: On the left, illustration of the convergence of a solution toward a trim in the linear quadratic case. The exponential convergence of the distance in blue between the trim and the optimal solution. In red, the distance between the optimal solution and the linear turnpike (see \cite{Tre:23}). On the right, the convergence of the static turnpike $\bar{x}(\lambda_T)$ as a function of the time horizon $T$.}
\label{fig:LQ-2}
\end{figure}

\subsection{Example 2: Non-linear case}
\label{subsec:ex_NLQ_case}

We now revisit the fully nonlinear example inspired by \cite{Torres:04}, but with a modified quadratic running cost to enforce (local) hyperbolicity of the reduced
Hamiltonian equilibrium (equilibrium of the reduced Hamiltonian system). The optimal control problem is
\begin{equation}
\label{eq:NLQ_full_problem}
\left\{
\begin{aligned}
\min_{u(\cdot)}\quad
&J_T(u)= \tfrac{1}{2}\int_0^{T}\left(x(t)^\top Q x(t)+u_1(t)^2+u_2(t)^2\right)\,dt,\\
\text{s.t.}\quad
&\dot x_1(t)=u_2(t), \qquad x_1(0)=x_{10},\ \ x_1(T)=x_1^T,\\
&\dot x_2(t)=\frac{u_1(t)}{1+\alpha x_1(t)},\qquad x_2(0)=x_{20},\ \ x_2(T)=x_2^T,\\
&\dot x_3(t)=x_2(t)^2\,u_1(t),\qquad x_3(0)=x_{30},\ \ x_3(T)=x_3^T,
\end{aligned}
\right.
\end{equation}
where $x=(x_1,x_2,x_3)\in\mathbb{R}^3$, $u=(u_1,u_2)\in\mathbb{R}^2$, $Q\in\mathbb{R}^{3\times 3}$
is symmetric positive definite, and $\alpha\in\mathbb{R}$ is a parameter.
We distinguish the \emph{flat} case $\alpha=0$ and the \emph{non-flat} case $\alpha\neq 0$. The notion of flatness is associated with the geometry of a related sub-Riemannian Martinet problem. For us, this geometric aspect is not important, nevertheless, both cases are considered because they correspond to problems with different dimensions of the cyclic variable. 

\medskip
\noindent\textbf{Flat case ($\alpha=0$).}
If $\alpha=0$, then $\dot x_2=u_1$ and the dynamics of $x_2$ is independent of $x_1$. In this case, the \emph{shape} variable is given by $x:=x_1\in\mathbb{R}$, and the \emph{cyclic} variable by the two-dimensional vector
$y = (y_1,y_2) :=(x_2,x_3)\in\mathbb{R}^2$. Then \eqref{eq:NLQ_full_problem} fits our general form \eqref{eq:OCP} as
\begin{equation}\label{eq:NLQ_flat_reformulated}
\left\{
\begin{aligned}
\min_{u(\cdot)}\quad
&\tfrac12 \int_0^{T}\left(x^2(t) + u^2(t)\right)\,dt,\\
\text{s.t.}\quad
&\dot x(t)=u_2(t),\quad x(0)=x_{10}, ~ x(T)=x_1^T,\\
&\dot y(t)=g_1(x(t)) + G_2(x(t))\,u(t),\quad y(0)=(x_{20},x_{30}), ~ y(T)=(x_2^T,x_3^T),
\end{aligned}
\right.
\end{equation}
with $u=(u_1,u_2)$, $Q = 1$, $R = I_2$ and
\[
g_1(x)\equiv 0,\qquad
G_2(x)=
\begin{pmatrix}
1 & 0\\
x^2 & 0
\end{pmatrix},
\qquad
\dot y(t)=\left(u_1(t),\ x(t)^2u_1(t)\right).
\]
In particular, the cyclic component has \emph{dimension two} in the flat case.

\medskip
\noindent{\emph{Reduced BVP and trim turnpike.}}
Applying Pontryagin’s Minimum Principle to \eqref{eq:NLQ_flat_reformulated}, we introduce
adjoints $p_x\in\R$ and $p_y=(p_{y_1},p_{y_2})\in\R^2$. Since the Hamiltonian does not depend explicitly on $y$, the cyclic multipliers are constant:
\[
\dot p_{y_1}=\dot p_{y_2}=0
\quad\Longrightarrow\quad
p_{y_1}\equiv \lambda_1,\qquad p_{y_2}\equiv \lambda_2.
\]
The minimization conditions give
\[
u_2 =-p_x, \qquad
u_1 =-(\lambda_1 + x^2\lambda_2).
\]
The reduced boundary value problem (RBVP) in $(x,p_x)$ becomes
\[
\dot x =-p_x, 
\qquad
\dot p_x =-x - 2x\lambda_2(\lambda_1 + x^2\lambda_2),
\]
with $x(0)=x_{10}$ and $x(T)=x_1^T$.  
The associated static problem (SOP) yields the steady pair
\[
\begin{aligned}
& \bar p_x =0, \quad \bar x \ \text{solution of}\ 
-\bar x + 2\bar x\lambda_2(\lambda_1+\bar x^2\lambda_2)=0,
\quad \bar u_1=-(\lambda_1+\bar x^2\lambda_2),\ \bar u_2=0.\\
i.e. ~& \bar p_x = 0, \quad \bar x = 0 ~ \text{or} ~ \bar x = \pm\sqrt{\frac{2\lambda_1\lambda_2-1}{2\lambda_2^2}} ~ \text{if} ~ 2\lambda_1\lambda_2-1 > 0, \quad \bar u_1=-(\lambda_1+\bar x^2\lambda_2),\ \bar u_2=0.
\end{aligned}
\]

\begin{Lemma}
The pair $\left(\bar x = \pm\sqrt{\frac{2\lambda_1\lambda_2-1}{2\lambda_2^2}}, \bar p_x = 0 \right)$, can not satisfy the hyperbolicity condition.
\end{Lemma}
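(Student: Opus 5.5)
The plan is to compute the Jacobian of the reduced Hamiltonian vector field at the nontrivial equilibria and show that its eigenvalues are purely imaginary. This is exactly the negation of (A3).

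First I will write the reduced Hamiltonian of $\mathrm{ROCP}(\lambda)$ for the flat case as in \eqref{eq:H_ROCP}:
\[
H^\lambda = \tfrac12 x^2 + \tfrac12(u_1^2+u_2^2) + p_x u_2 + \lambda_1 u_1 + \lambda_2 x^2 u_1 .
\]
Minimization gives $u_2=-p_x$ and $u_1=-(\lambda_1+\lambda_2x^2)$. The state equation is $\dot x = -p_x$. Differentiating $H^\lambda$ in $x$ and substituting $u_1$, the adjoint equation becomes
\[
\dot p_x = -x - 2\lambda_2 x u_1 = -x + 2\lambda_2 x(\lambda_1+\lambda_2 x^2) =: \phi(x).
\]
I will use this expression because it is the one consistent with the stationarity condition defining $\bar x$ in the text. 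The $p_x$-equation displayed above the lemma seems to carry a sign slip, and I will recompute it directly from $H^\lambda$ to avoid ambiguity.

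The equilibria are $\bar p_x=0$ together with $\phi(\bar x)=0$. The nontrivial roots are $\bar x^2 = (2\lambda_1\lambda_2-1)/(2\lambda_2^2)$, which requires $2\lambda_1\lambda_2>1$. The linearization at $(\bar x,0)$ is
\[
J=\begin{pmatrix} 0 & -1\\ \phi'(\bar x) & 0\end{pmatrix},\qquad \phi'(x) = -1+2\lambda_1\lambda_2 + 6\lambda_2^2 x^2 .
\]
Inserting $\bar x^2$ gives $\phi'(\bar x) = (2\lambda_1\lambda_2-1)+3(2\lambda_1\lambda_2-1) = 4(2\lambda_1\lambda_2-1)$, which is strictly positive under the existence condition. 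The characteristic polynomial of $J$ is $\mu^2+\phi'(\bar x)=0$, so $\mu=\pm 2i\sqrt{2\lambda_1\lambda_2-1}$. These eigenvalues lie on the imaginary axis, so (A3) fails and the equilibrium is a center.

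The only delicate step is the sign bookkeeping in $\phi$. The conclusion hinges on $\phi'(\bar x)>0$. With the opposite sign convention the nonzero equilibria would instead require $2\lambda_1\lambda_2<-1$, but one checks that $\phi'(\bar x)$ would again have the sign producing imaginary eigenvalues. This happens because the cubic term always dominates at a nontrivial root: the derivative there equals $4\times$ the linear coefficient with the sign flipped relative to $x=0$. For contrast, at $\bar x=0$ the same calculation gives $\mu^2 = 1-2\lambda_1\lambda_2$, which is hyperbolic exactly when $2\lambda_1\lambda_2<1$. This explains why only the trivial equilibrium can serve as a turnpike.
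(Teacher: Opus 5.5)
Your route is the paper's: linearize the planar reduced Hamiltonian field at the nontrivial equilibria and show that the squared eigenvalue is nonpositive. Your adjoint equation $\phi(x)=(2\lambda_1\lambda_2-1)x+2\lambda_2^2x^3$ is the correct one.

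The gap is the next step. You took the formula for $\bar x$ from the lemma without checking it against your own $\phi$, and that formula carries the same sign slip you spotted in the displayed RBVP. At $\bar x^2=(2\lambda_1\lambda_2-1)/(2\lambda_2^2)$ you get $\phi(\bar x)=2(2\lambda_1\lambda_2-1)\bar x\neq0$ whenever $2\lambda_1\lambda_2>1$. In fact, for $2\lambda_1\lambda_2>1$ the origin is the only equilibrium. So your Jacobian is evaluated at a non-stationary point, and $\pm2i\sqrt{2\lambda_1\lambda_2-1}$ says nothing about (A3).

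Solving $\phi(\bar x)=0$ correctly gives $\bar x^2=(1-2\lambda_1\lambda_2)/(2\lambda_2^2)$, which exists iff $\lambda_2\neq0$ and $2\lambda_1\lambda_2<1$. There $\phi'(\bar x)=(2\lambda_1\lambda_2-1)+3(1-2\lambda_1\lambda_2)=2(1-2\lambda_1\lambda_2)>0$, hence $\mu=\pm i\sqrt{2(1-2\lambda_1\lambda_2)}$. The conclusion survives, but with the opposite existence range and different eigenvalues from the ones you wrote.

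A cheap check would have caught the slip. The system $\dot x=-p_x$, $\dot p_x=\phi(x)$ is generated by $h=-\tfrac12p_x^2+W(x)$ with $W'=-\phi$. Consecutive nondegenerate equilibria on the $x$-axis therefore alternate between saddles (minima of $W$) and centers (maxima of $W$). Your own, correct, formula $\mu^2=1-2\lambda_1\lambda_2$ makes the origin a center when $2\lambda_1\lambda_2>1$, so its neighbours could not also be centers.

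Your final paragraph should be dropped, because its robustness claim is false. For $\phi=ax+bx^3$, a nontrivial root requires $ab<0$. There $\phi'(\bar x)=-2a$, not $4a$ as in your computation, so $\mu^2=2a$, which is negative exactly when $b>0$. With the sign of the paper's displayed $\dot p_x$ one has $b=-2\lambda_2^2<0$. The nontrivial equilibria then exist for $2\lambda_1\lambda_2<-1$ and have real eigenvalues $\pm\sqrt{-2(1+2\lambda_1\lambda_2)}$, i.e.\ they would be hyperbolic saddles. So getting the sign of $\dot p_x$ right, as you did, is essential to the lemma, not a matter of convention.

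For comparison, the paper's proof has the same structure and the same incorrect formula and existence range for $\bar x$. It reaches $\delta^2\le0$ only because the entry $+1$ in its matrix $M$ is also sign-flipped relative to $\dot x=-p_x$; its lower-left entry $2(1-2\lambda_1\lambda_2)$ is in fact the correct value of $\phi'(\bar x)$. Your computation at the origin is right, and it shows that the paper's subsequent claim (hyperbolicity of $(0,0)$ for $2\lambda_1\lambda_2>1$) has the inequality reversed.
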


\begin{proof}
Let us first remark that $\bar x = \pm\sqrt{\frac{2\lambda_1\lambda_2-1}{2\lambda_2^2}}$ is a real equilibrium point only if $2\lambda_1\lambda_2-1 \geq 0$. Now, assuming that $2\lambda_1\lambda_2-1 \geq 0$, then the Hamiltonian matrix 
\[
M = \begin{pmatrix}
    0 & 1 \\
    2(1-2\lambda_1 \lambda_2 )& 0
\end{pmatrix}
\]
of the reduced problem associated to the critical point $\left(\bar x = \pm\sqrt{\frac{2\lambda_1\lambda_2-1}{2\lambda_2^2}}, \bar p_x = 0 \right)$ is not hyperbolic, since the eigenvalues $\pm{\delta}$ will be given by $\delta^2 = -2(2\lambda_1 \lambda_2-1) \leq 0 $.

\end{proof}

On the other hand, the Hamiltonian matrix of the reduced problem associated to the steady point $\left(\bar x = 0, \bar p_x = 0 \right)$ given by
$
M = \begin{pmatrix}
    0 & 1 \\
    2\lambda_1 \lambda_2-1 & 0
\end{pmatrix}
$
is hyperbolic whenever $2\lambda_1\lambda_2-1 > 0$. Thus from the Theorem~\ref{thm:exp_trim_turnpike} the solution exhibits, as illustrated in Figures~\ref{fig:NLQ-1} and \ref{fig:NLQ-2}, an exponential turnpike around the trim defined by: 
\[
\bar x = \bar p_x = 0, \qquad \dot{\bar y}_T(t)=G_2(\bar x)\bar u = \big(\bar u_1,\ \bar x^2\bar u_1\big), \qquad \bar y_T(T/2)=\bar y_{T/2} = y_T(T/2).
\]

For the numerical simulations, as illustrate in Figures~\ref{fig:NLQ-1} and \ref{fig:NLQ-2}, we use the Julia package OptimalControl.jl \cite{caillau2024optimalcontrol} to solve the full (OCP) on a uniform grid with $N=600$ time steps over $[0,T]$, with $T=50$ with the parameters 
\[
x(0)=-2,\quad x(T)=4,\quad
y_1(0)=1,\quad y_1(T)=-5,\quad
y_2(0)=-1,\quad y_2(T)=5.
\]

\begin{figure}[ht]
\centering
\def\size{0.3}
\def\sizeh{0.1}
\includegraphics[width=\size\textwidth]{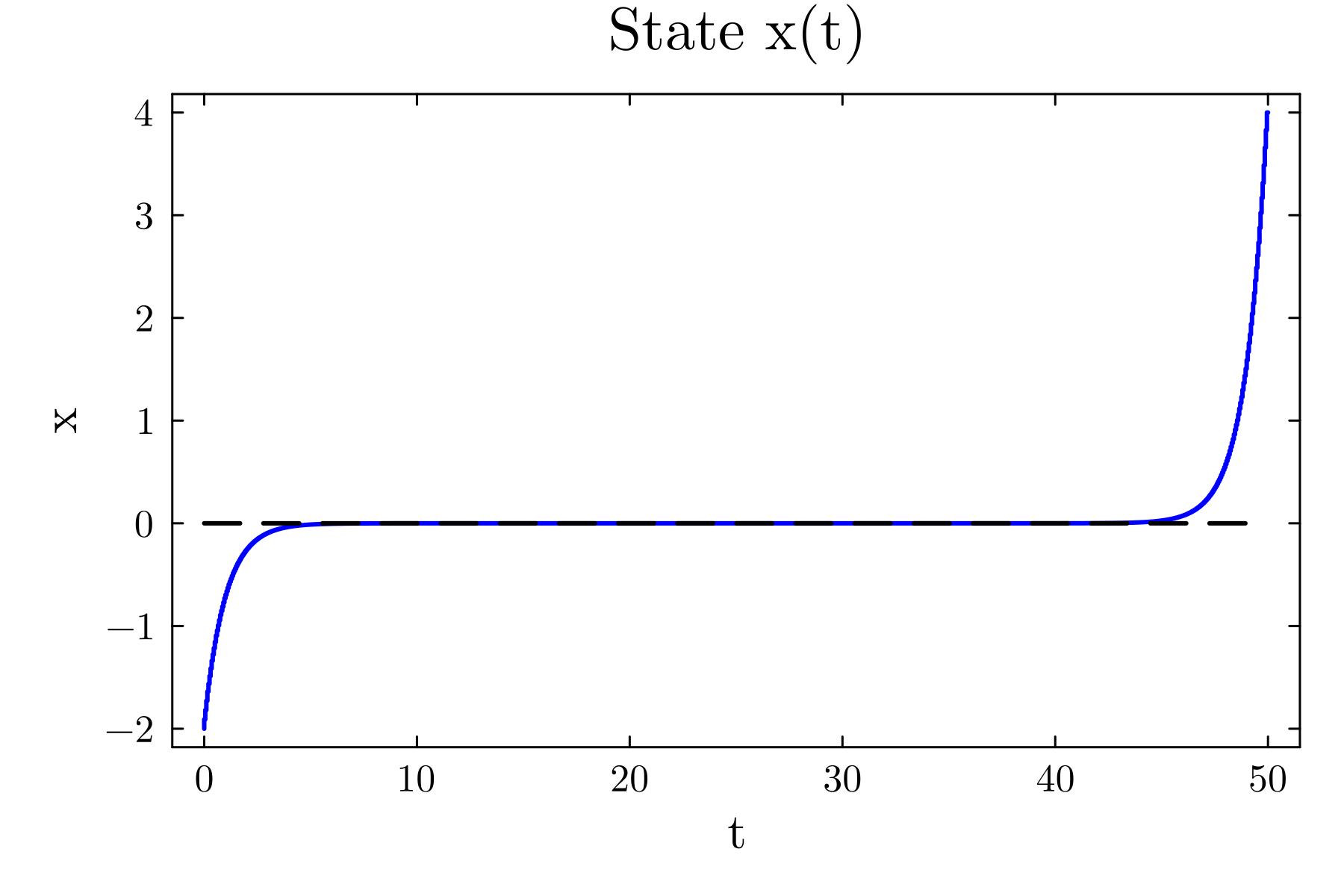}
\hspace{\sizeh cm}
\includegraphics[width=\size\textwidth]{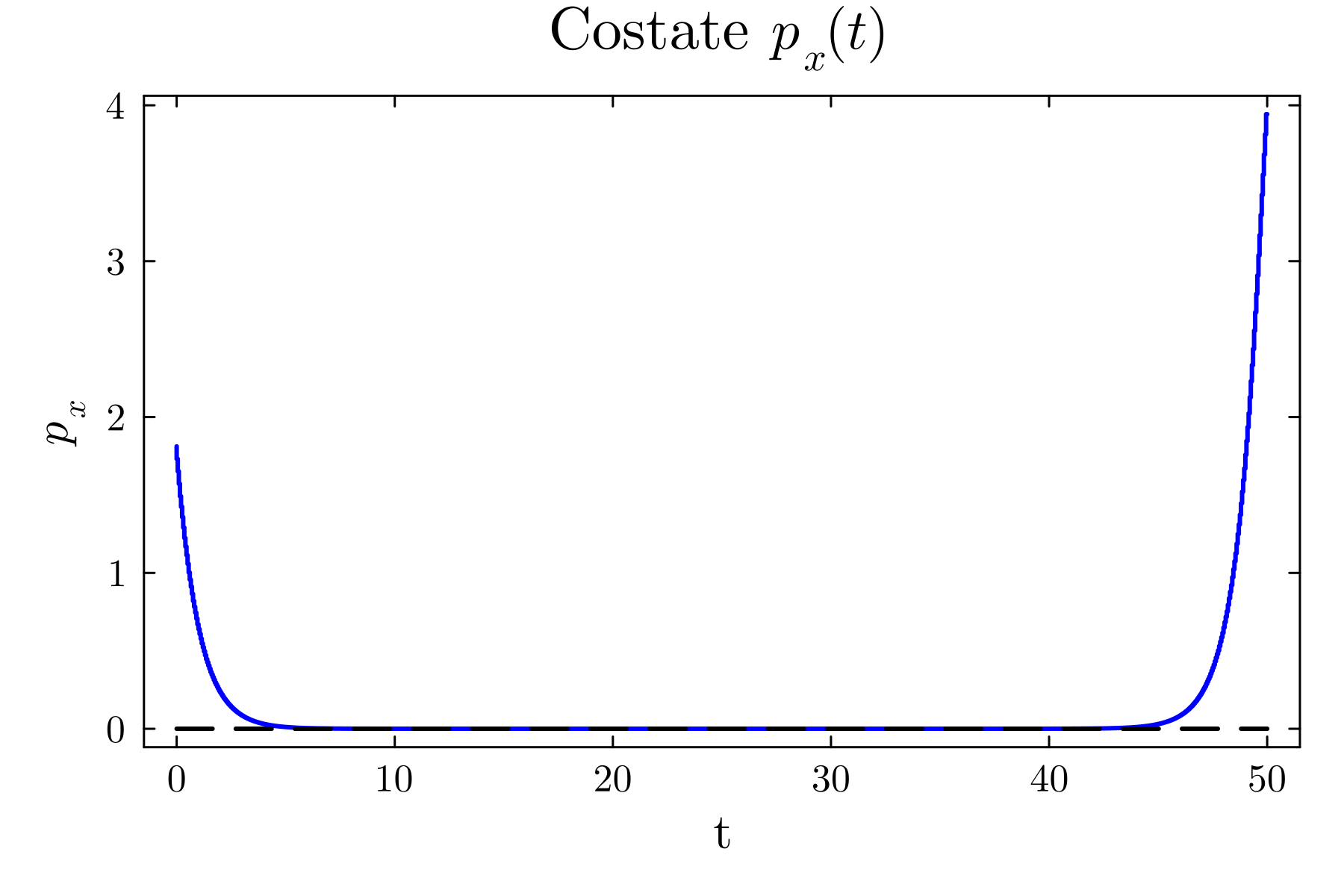}
\hspace{\sizeh cm}
\includegraphics[width=\size\textwidth]{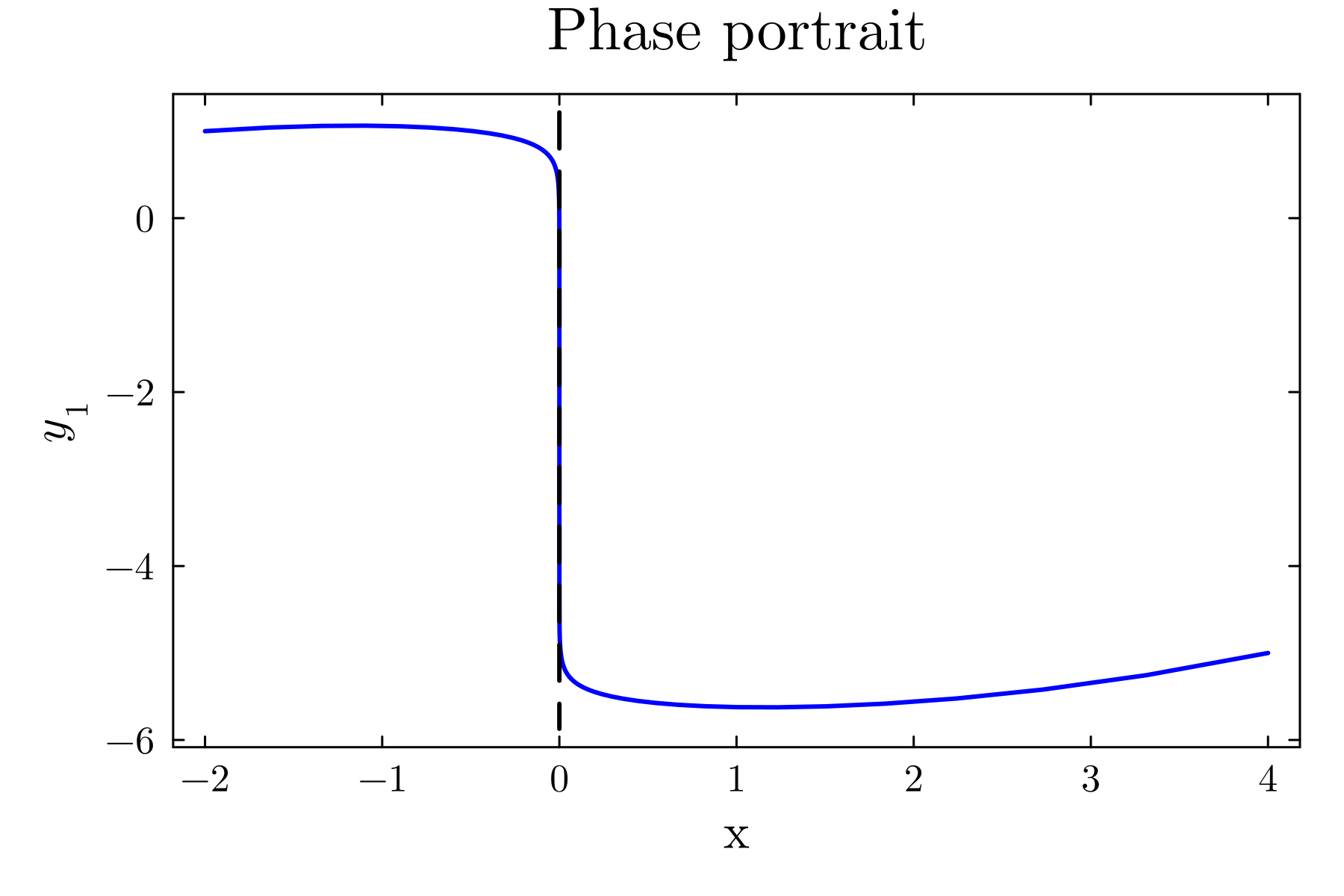}
\hspace{\sizeh cm}
\includegraphics[width=\size\textwidth]{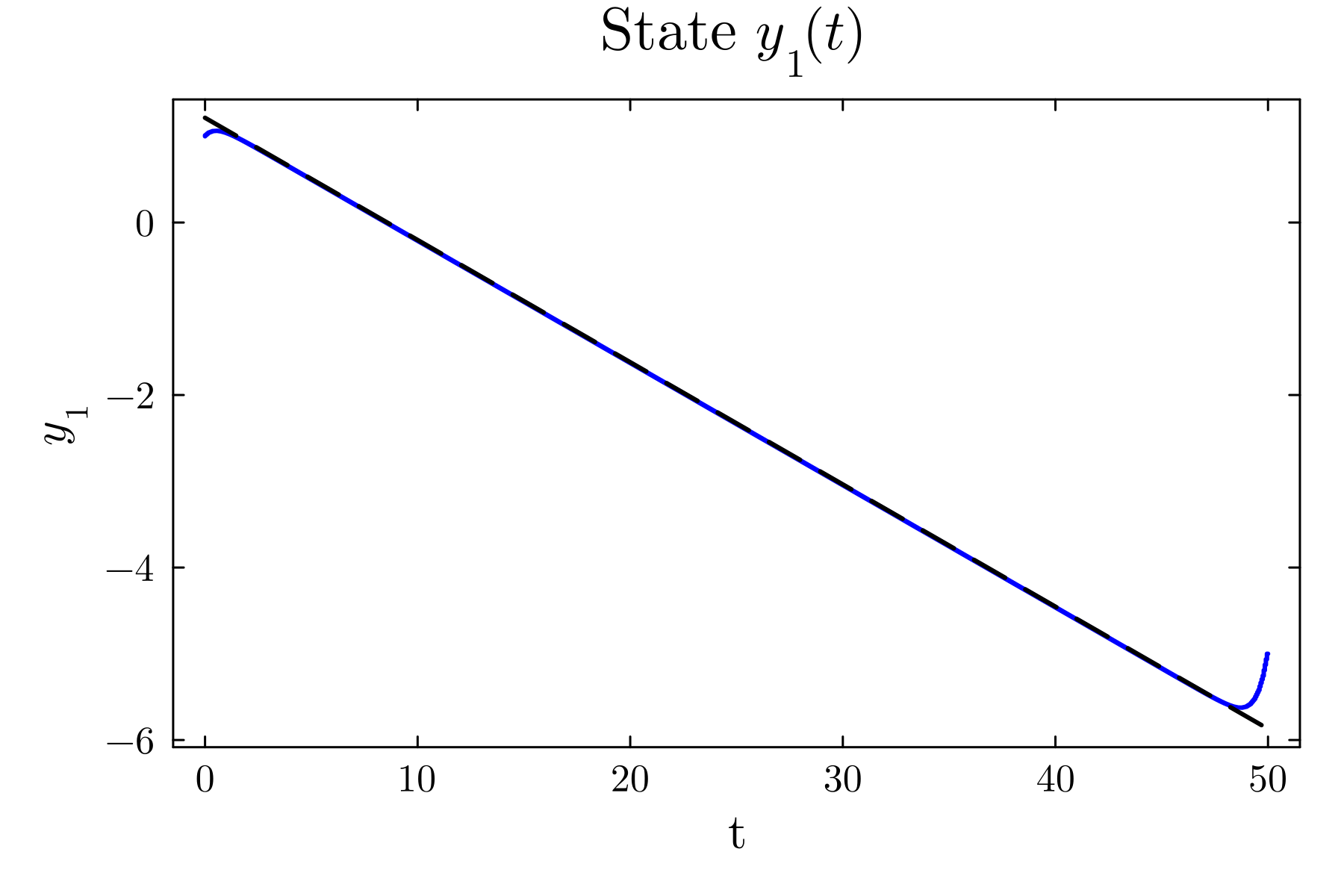}
\hspace{\sizeh cm}
\includegraphics[width=\size\textwidth]{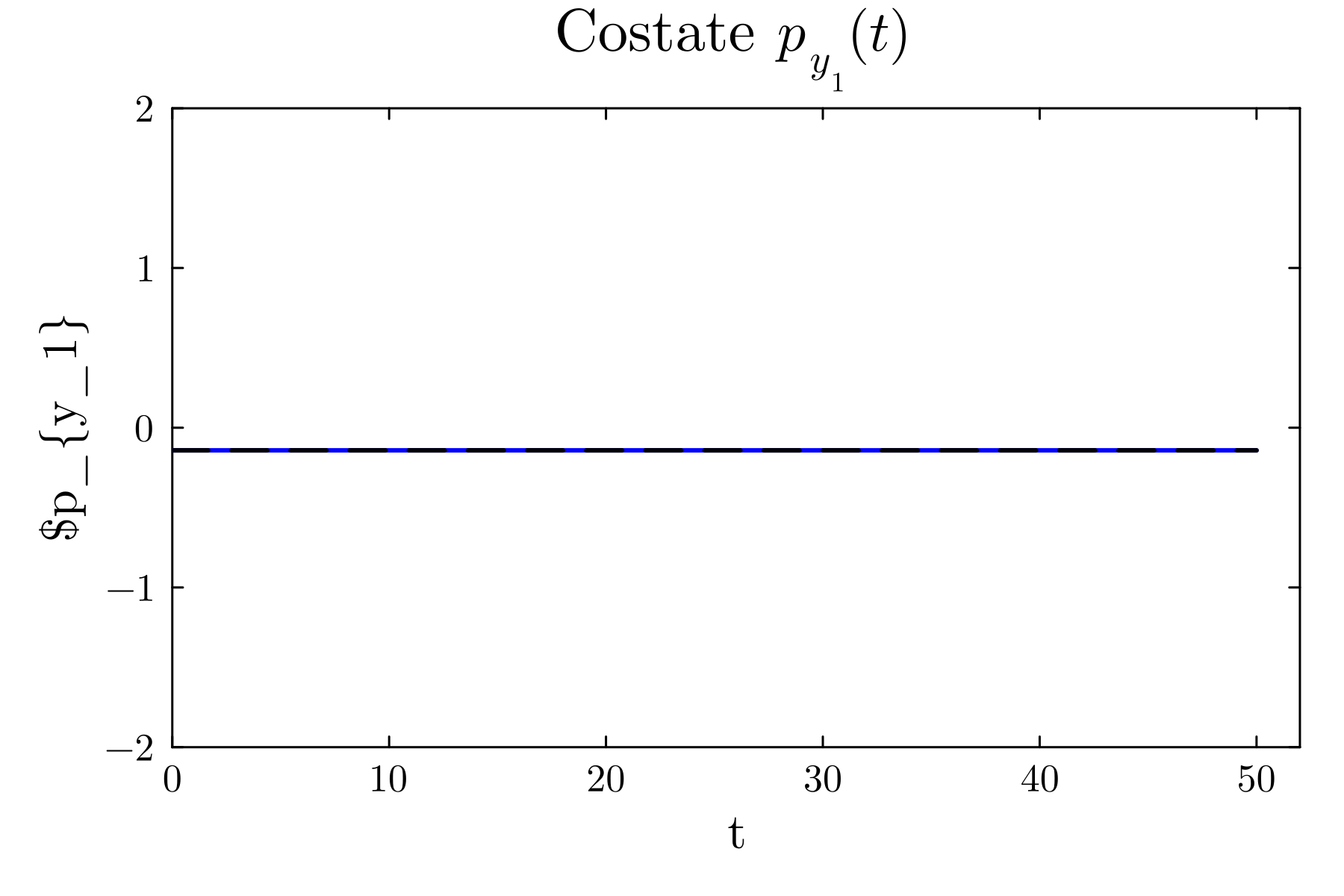}
\hspace{\sizeh cm}
\includegraphics[width=\size\textwidth]{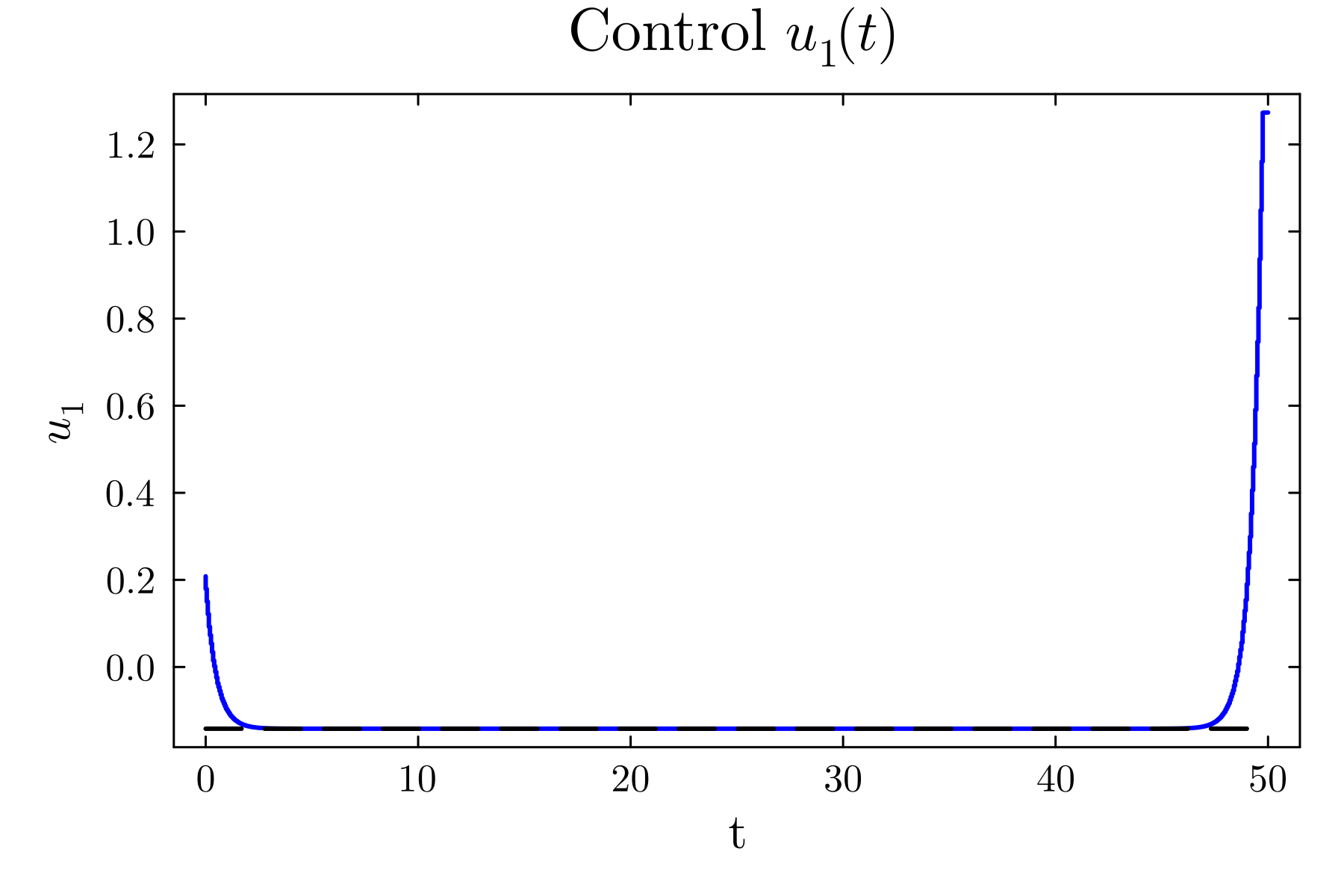}
\hspace{\sizeh cm}
\includegraphics[width=\size\textwidth]{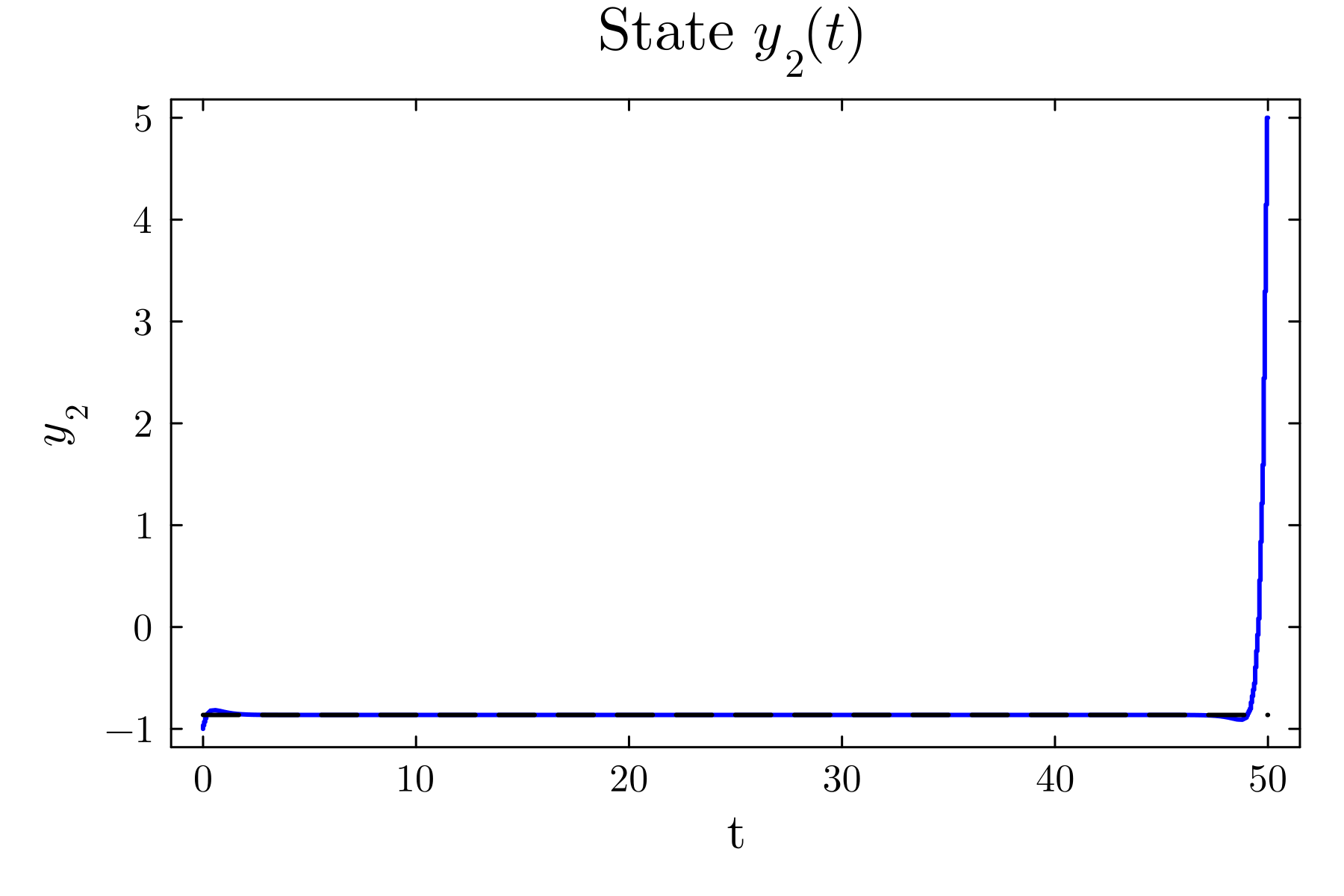}
\hspace{\sizeh cm}
\includegraphics[width=\size\textwidth]{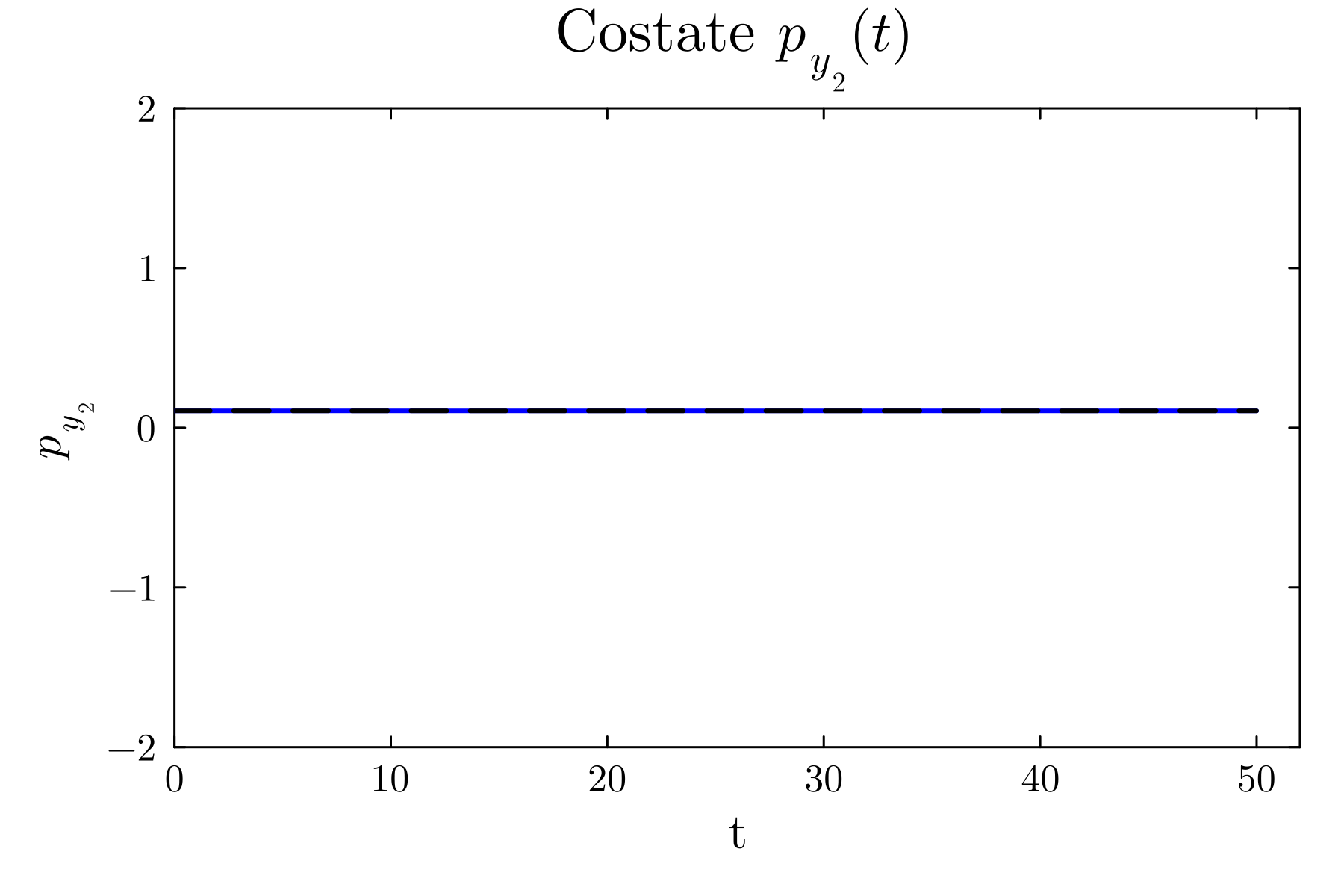}
\hspace{\sizeh cm}
\includegraphics[width=\size\textwidth]{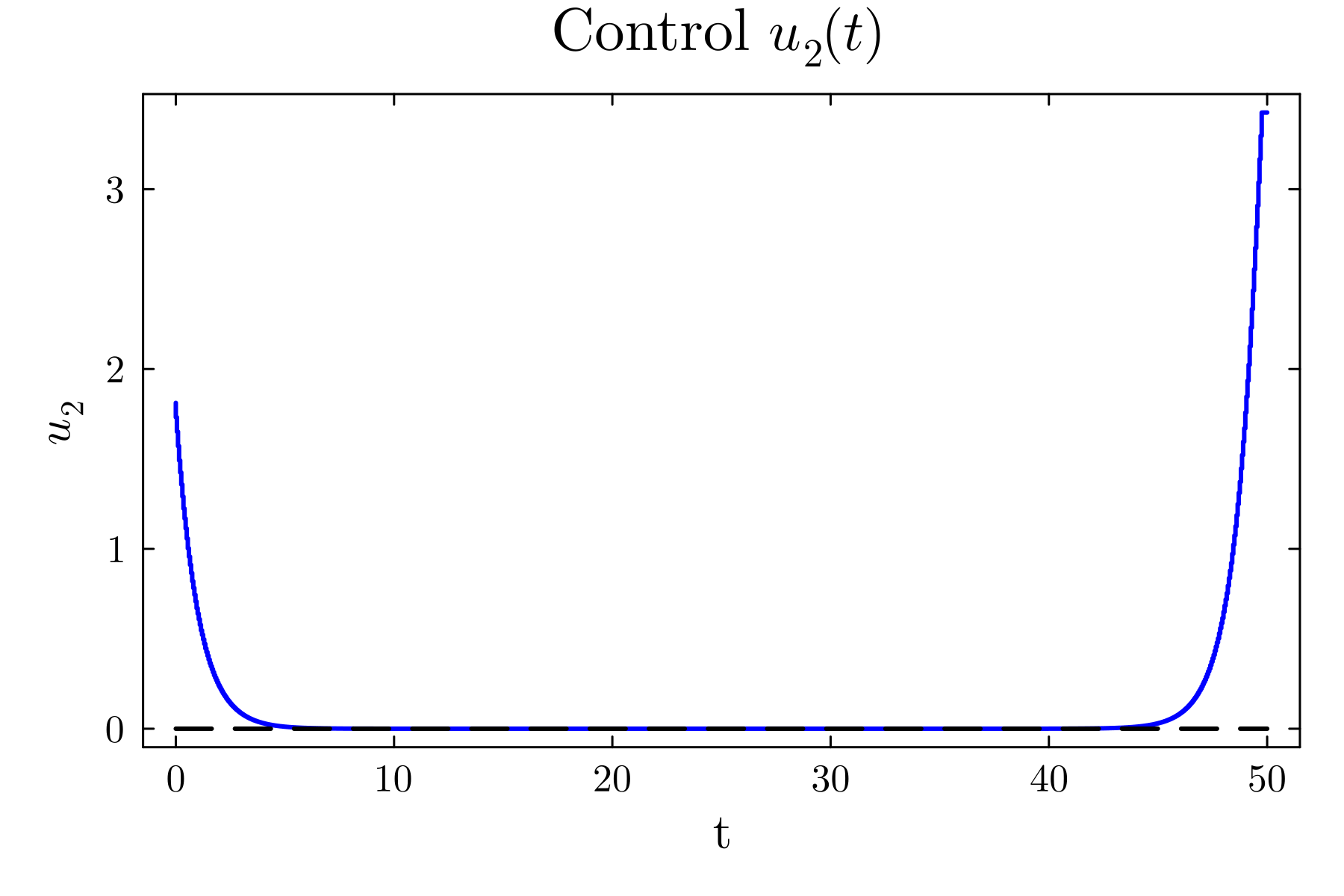}
\vspace{-0.cm}
\caption{Example 2 ($\alpha = 0$): Turnpike property depicted for the state, the co-state and the control in the flat case. The solution is represented in blue and the trim turnpike in dash black.}
\label{fig:NLQ-1}
\end{figure}

\def\sizeFig{0.4}
\begin{figure}[ht!]
\centering
\includegraphics[width=\sizeFig\textwidth]{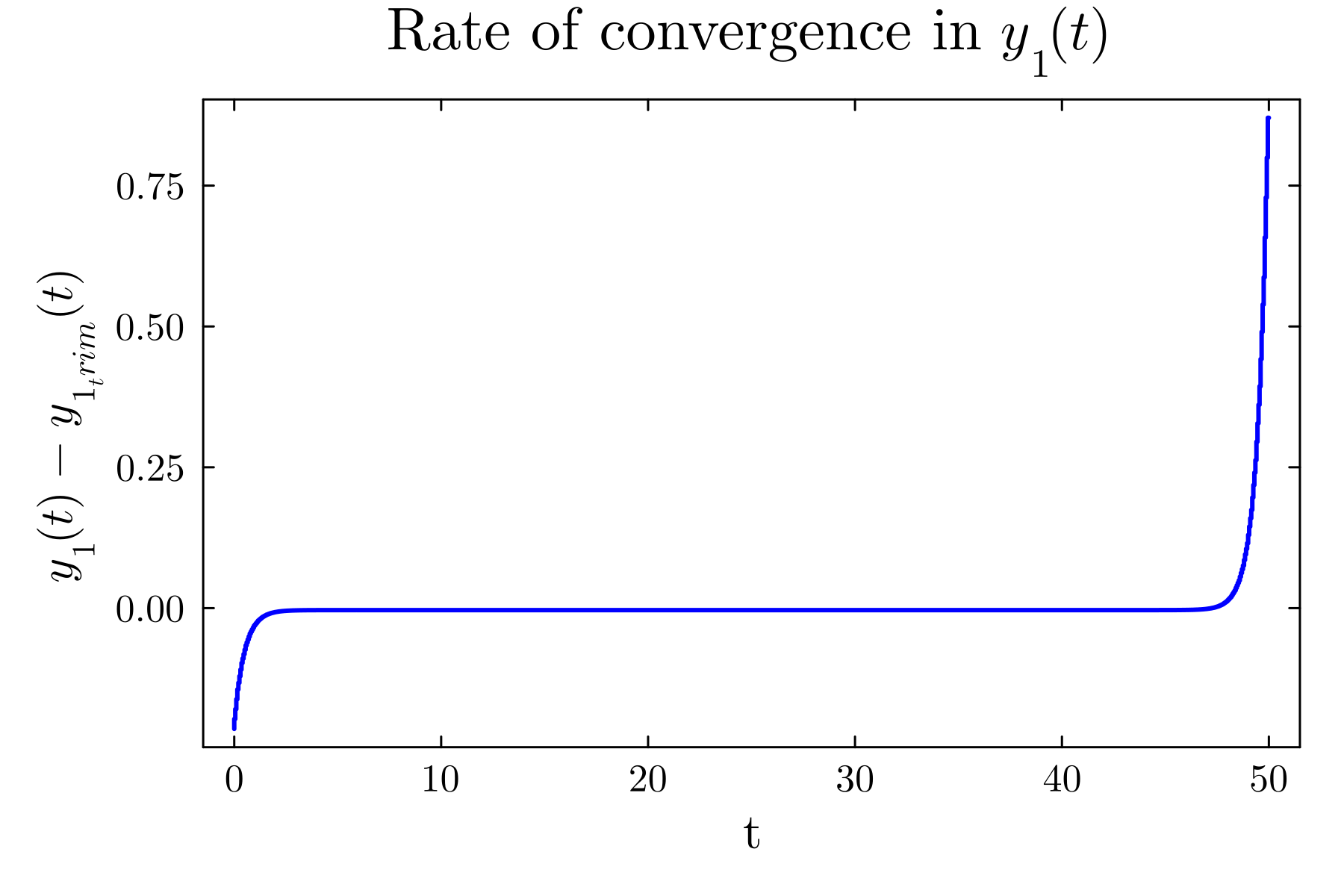}
\hspace{0.5cm}
\includegraphics[width=\sizeFig\textwidth]{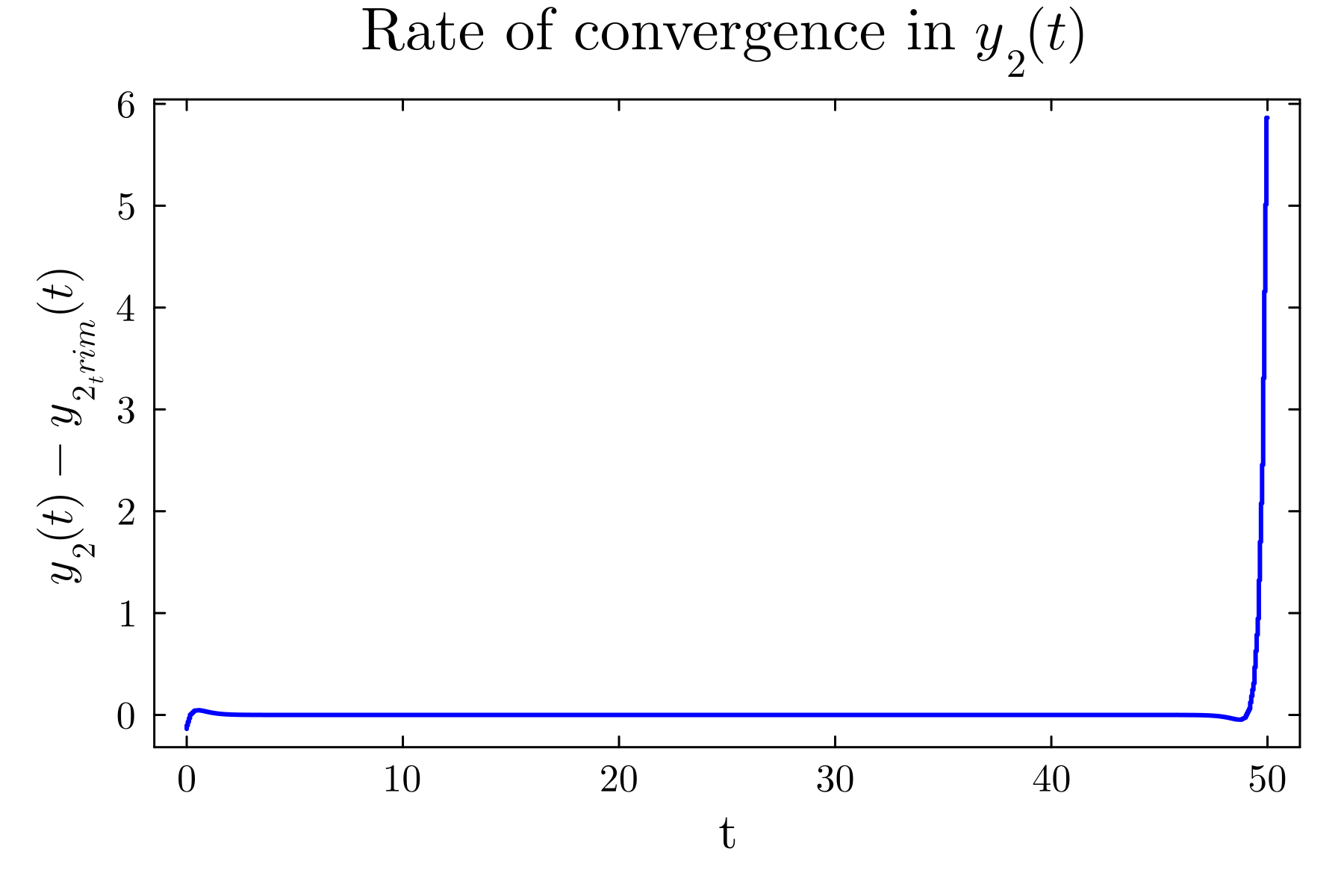}
\vspace{-0.1cm}
\caption{Example 2 ($\alpha = 0$): Illustration of the exponential convergence of a solution toward a trim in the non-linear case.}
\label{fig:NLQ-2}
\end{figure}

\medskip
\noindent\textbf{Non-flat case ($\alpha\neq 0$).}
If $\alpha\neq 0$, the shape dynamics for $x_2$ depend on $x_1$ through the factor $(1+\alpha x_1)^{-1}$,
so the \emph{shape} variable is now two-dimensional $x:=(x_1,x_2)\in\mathbb{R}^2,$ while the \emph{cyclic} variable is given by $y:=x_3\in\mathbb{R}$. Then \eqref{eq:NLQ_full_problem} fits \eqref{eq:OCP} with
\begin{equation}\label{eq:NLQ_nonflat_identification}
\dot x = f_1(x)+F_2(x)u,\qquad \dot y = g_1(x)+G_2(x)u,
\end{equation}
where $u=(u_1,u_2)$ and
\[
f_1(x)\equiv 0,\qquad
F_2(x)=
\begin{pmatrix}
0 & 1\\[0.2em]
\dfrac{1}{1+\alpha x_1} & 0
\end{pmatrix},
\qquad
g_1(x)\equiv 0,\qquad
G_2(x)=
\begin{pmatrix}
x_2^2 & 0
\end{pmatrix}.
\]
The boundary conditions become
\[
x(0)=(x_{10},x_{20}),\quad x(T)=(x_1^T,x_2^T),
\qquad
y(0)=x_{30},\quad y(T)=x_3^T.
\]

\medskip
\noindent{\emph{Reduced BVP and trim turnpike.}}
Applying Pontryagin’s Minimum Principle to \eqref{eq:NLQ_full_problem}, we introduce the
adjoint variables $(p_x, p_y)=(p_{x_1},p_{x_2}, p_y)$.
Since the Hamiltonian does not depend explicitly on the cyclic variable $y=x_3$, the corresponding multiplier is constant
\[
\dot p_y(t)=0 \quad\Longrightarrow\quad p_y(t)\equiv \lambda.
\]
The minimization condition $\partial_u H=0$ yields
\[
u_1 = -\left(\frac{p_{x_2}}{1+\alpha x_1} + \lambda x_2^2\right),
\qquad
u_2 = -p_{x_1}.
\]
Substituting these expressions into the state–adjoint equations gives the reduced boundary value problem (RBVP) in $(x,p_x)$:
\[
\begin{aligned}
\dot x_1 &= -p_{x_1}, && \dot p_{x_1} =  -x_1 + \frac{\alpha p_{x_2}}{(1+\alpha x_1)^2}\left(\frac{p_{x_2}}{1+\alpha x_1} + \lambda x_2^2\right), \\
\dot x_2 &= -\frac{1}{1+\alpha x_1}\left(\frac{p_{x_2}}{1+\alpha x_1} + \lambda x_2^2\right), && \dot p_{x_2} =  -x_2 - 2\lambda x_2 \left(\frac{p_{x_2}}{1+\alpha x_1} + \lambda x_2^2\right).
\end{aligned}
\]
The associated static problem given by  $\dot{\bar x}_1 = \dot{\bar x}_2 = \dot{\bar p}_{x_1} = \dot{\bar p}_{x_2} = 0$ determines a steady pair $(\bar x, \bar p_x)$. It is straightforward to check that the unique solution of this static problem is $(\bar x, \bar p_x) = (0,0)$. with the associated steady control is given by $\bar u = \left(-\frac{\bar p_{x_2}}{1+\alpha \bar x_1} - \lambda \bar x_2^2, -\bar p_{x_1} \right)$. 
Moreover, the hyperbolicity assumption for this steady pair can be check also. Thus, by Theorem~\ref{thm:exp_trim_turnpike} the solution exhibits, as confirmed by Figure~\ref{fig:NLQ-3}, an exponential turnpike around the trim defined by 
\[
(\bar x, \bar p_x, \bar u) = (0, 0, 0), \quad \dot{\bar y}_T(t)=G_2(\bar x)\bar u = \bar x_2^{\,2}\,\bar u_1,
\qquad
\bar y_T(T/2)=\bar y_T(T/2).
\]

For the numerical simulations we use again the Julia package OptimalControl.jl \cite{caillau2024optimalcontrol} to solve the full (OCP) on a uniform grid with $N=600$ time steps over $[0,T]$, with $T=50$ and the parameters
\[
x_1(0)=-2,\quad x_2(0)=1,\quad x_3(0)=-1,
\qquad
x_1(T)=4,\quad x_2(T)=-5,\quad x_3(T)=5.
\]

\begin{remark}
The distinction between the flat and non-flat cases is structural: for $\alpha=0$ the cyclic component is
two-dimensional $y=(x_2,x_3)$, whereas for $\alpha\neq 0$ it is one-dimensional $y=x_3$. This impacts the dimension of the cyclic multiplier $\lambda$ and the endpoint map $\mathcal{Y}_T(\lambda)$, but the ROCP/RBVP mechanism and the midpoint anchoring argument remain identical. Moreover, in the flat case, one could observe that although assumption (A1) is not  satisfied, i.e., there is no uniqueness of the solution of the static problem, the solution still exhibits the exponential turnpike behavior. This aligns with Remark~\ref{rk:static_solution}.
\end{remark}

\begin{figure}[ht]
\centering
\def\size{0.3}
\def\sizeh{0.1}
\includegraphics[width=\size\textwidth]{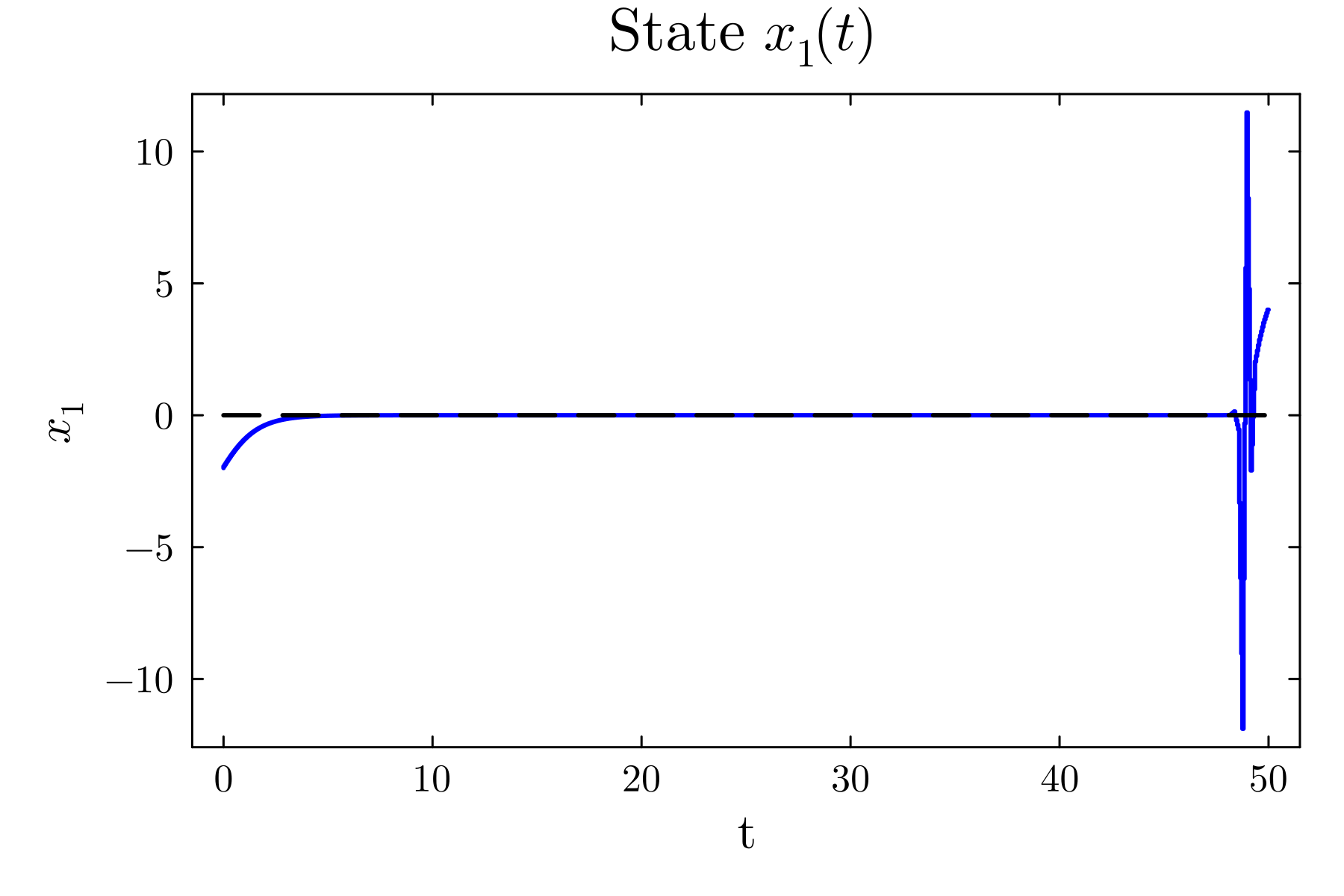}
\hspace{\sizeh cm}
\includegraphics[width=\size\textwidth]{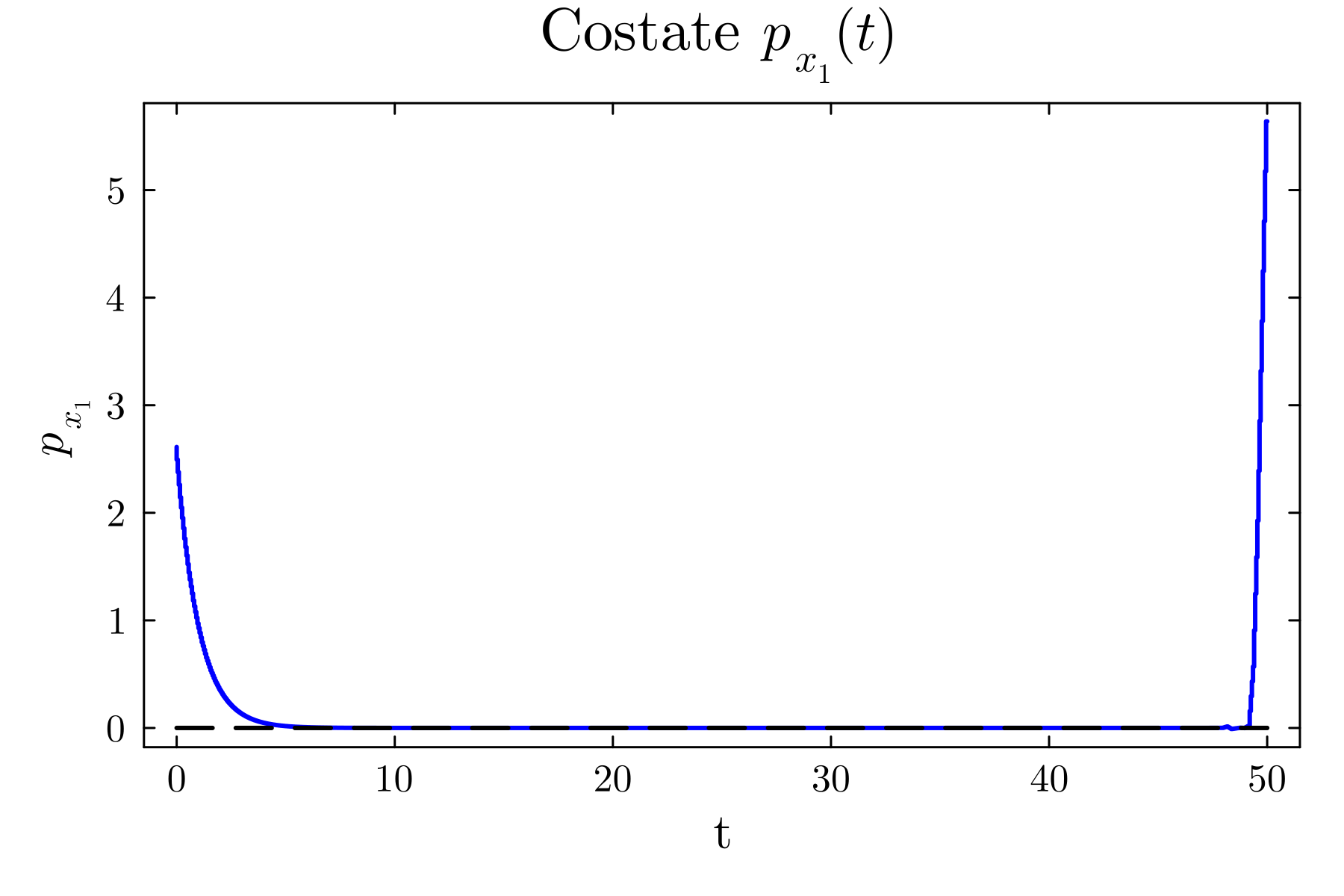}
\hspace{\sizeh cm}
\includegraphics[width=\size\textwidth]{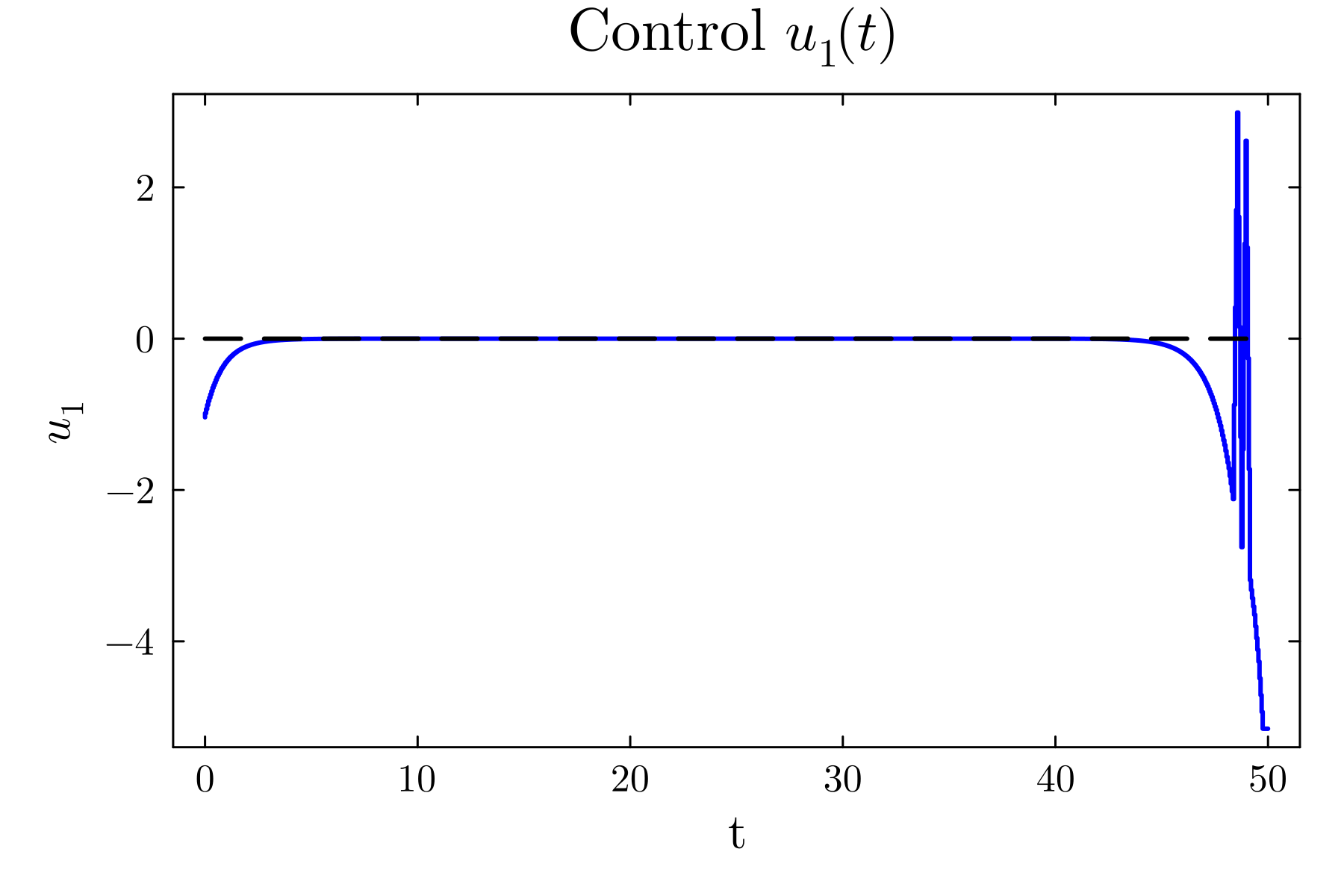}
\hspace{\sizeh cm}
\includegraphics[width=\size\textwidth]{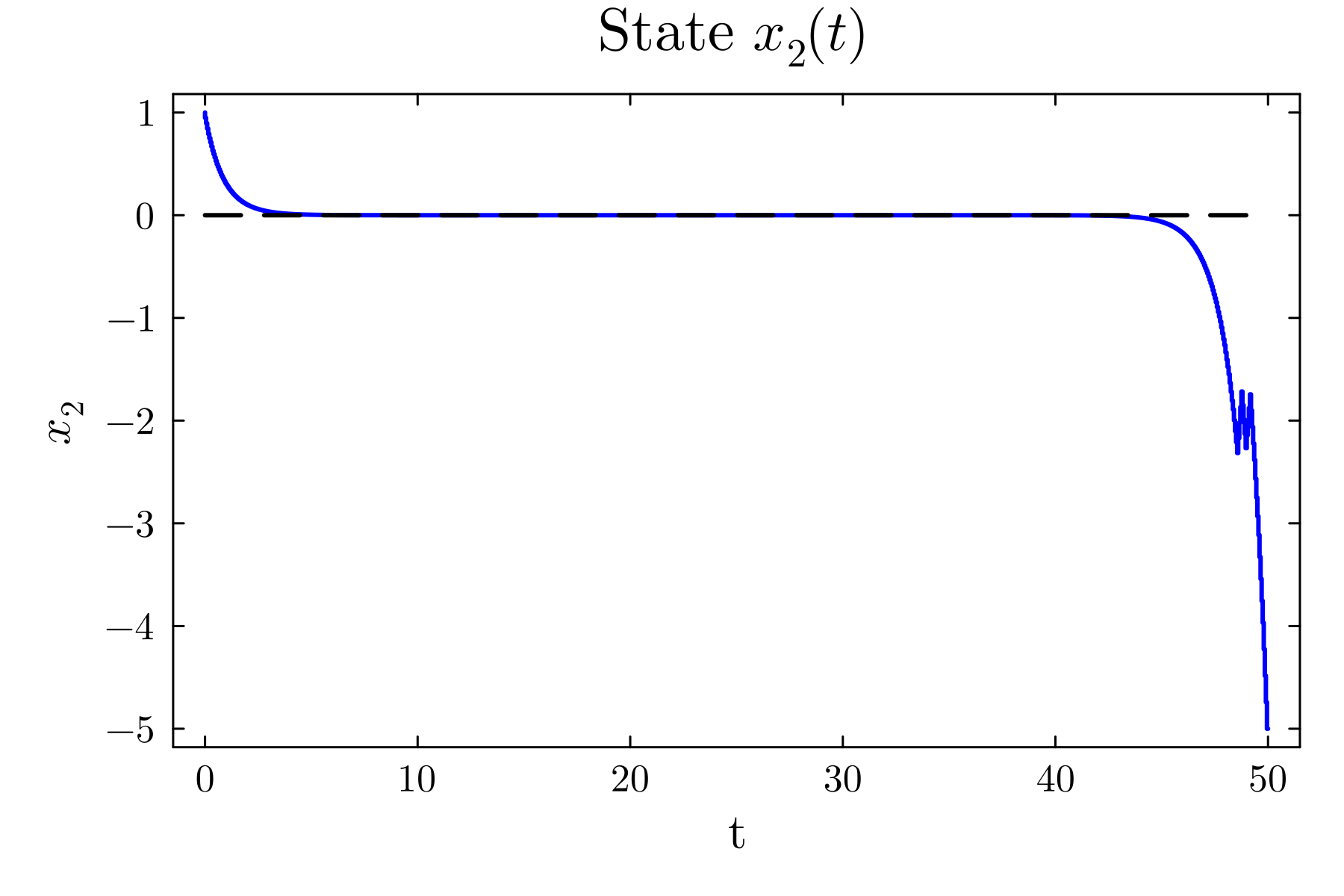}
\hspace{\sizeh cm}
\includegraphics[width=\size\textwidth]{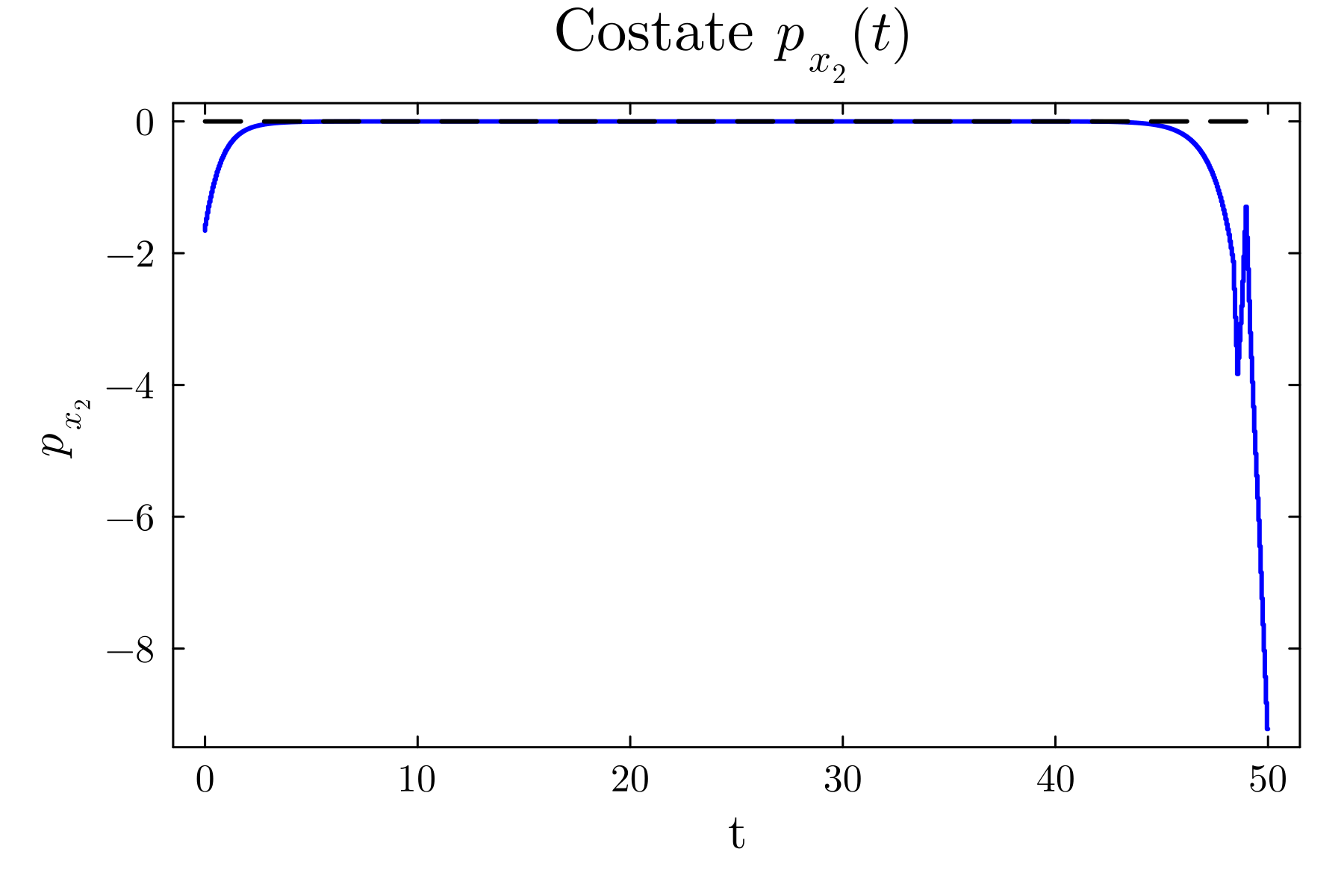}
\hspace{\sizeh cm}
\includegraphics[width=\size\textwidth]{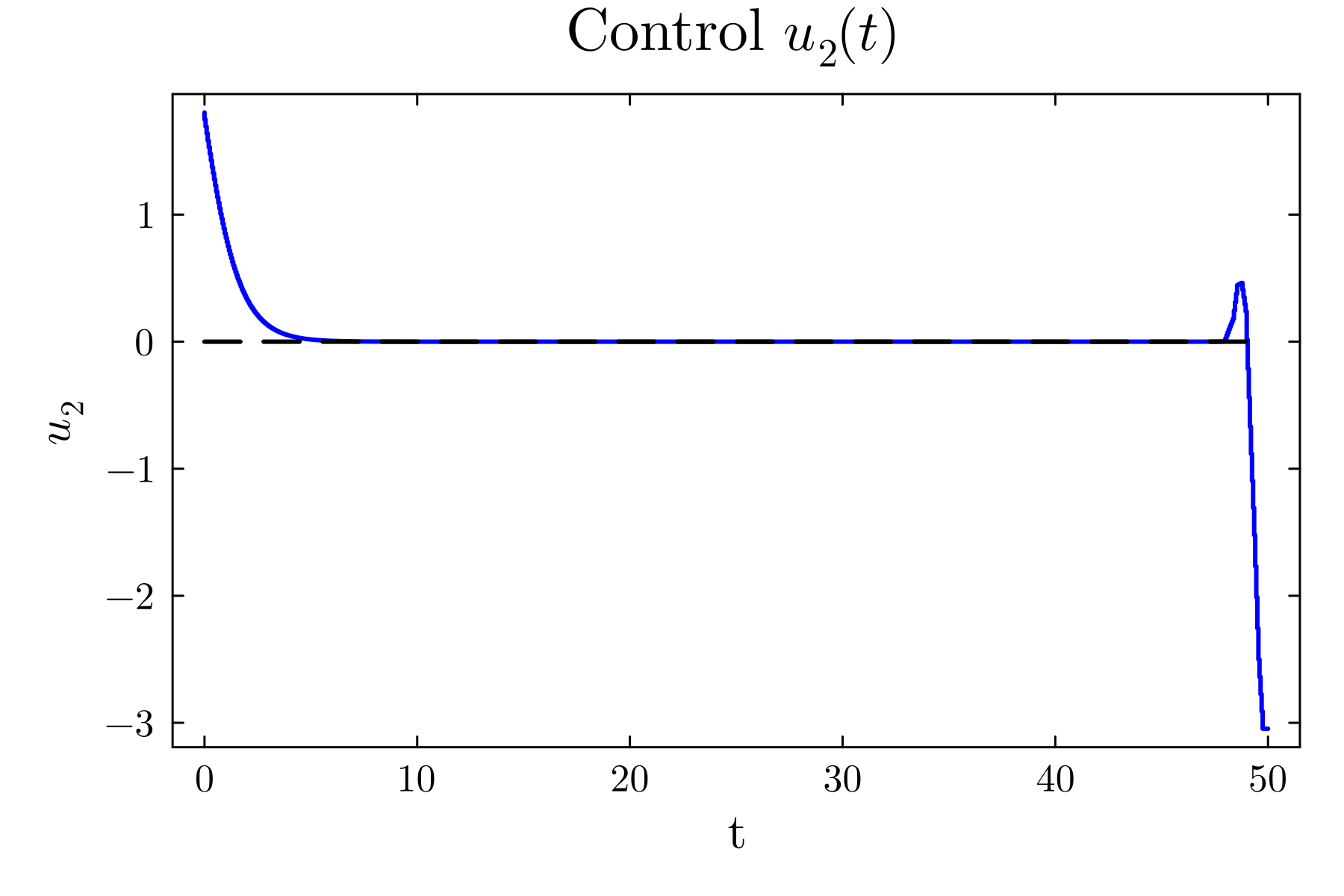}
\vspace{-0.cm}
\includegraphics[width=\size\textwidth]{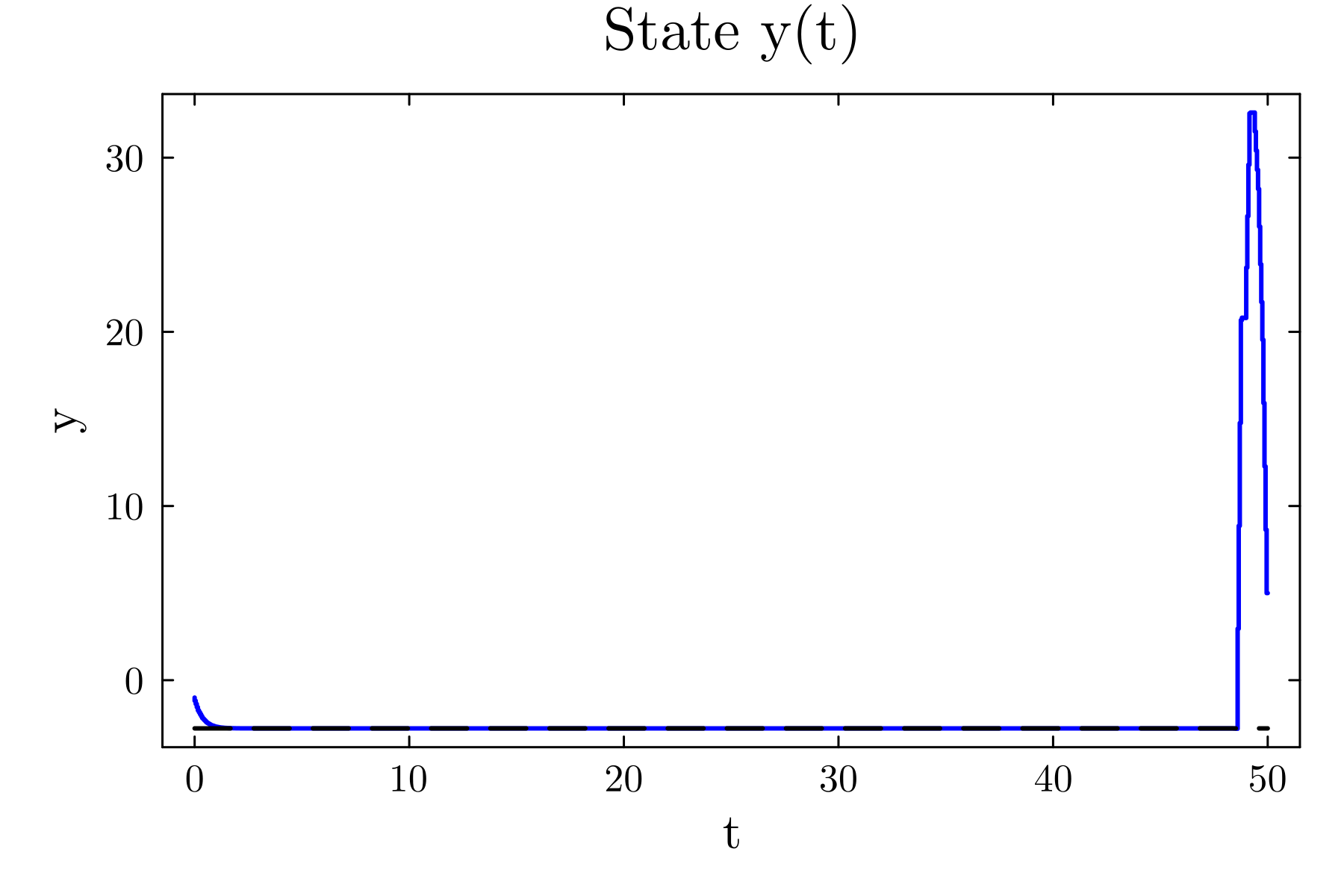}
\hspace{\sizeh cm}
\includegraphics[width=\size\textwidth]{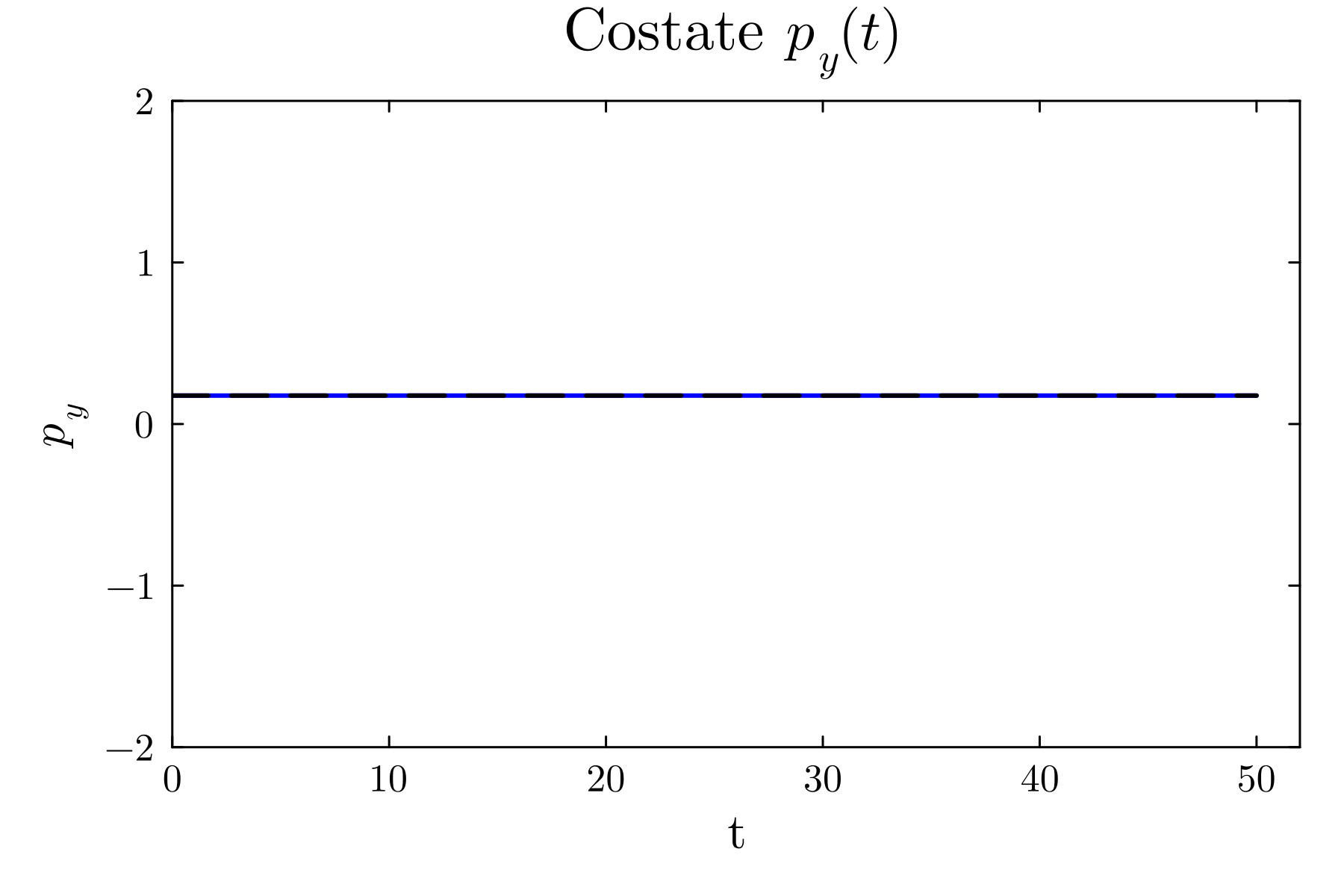}
\hspace{\sizeh cm}
\includegraphics[width=\size\textwidth]{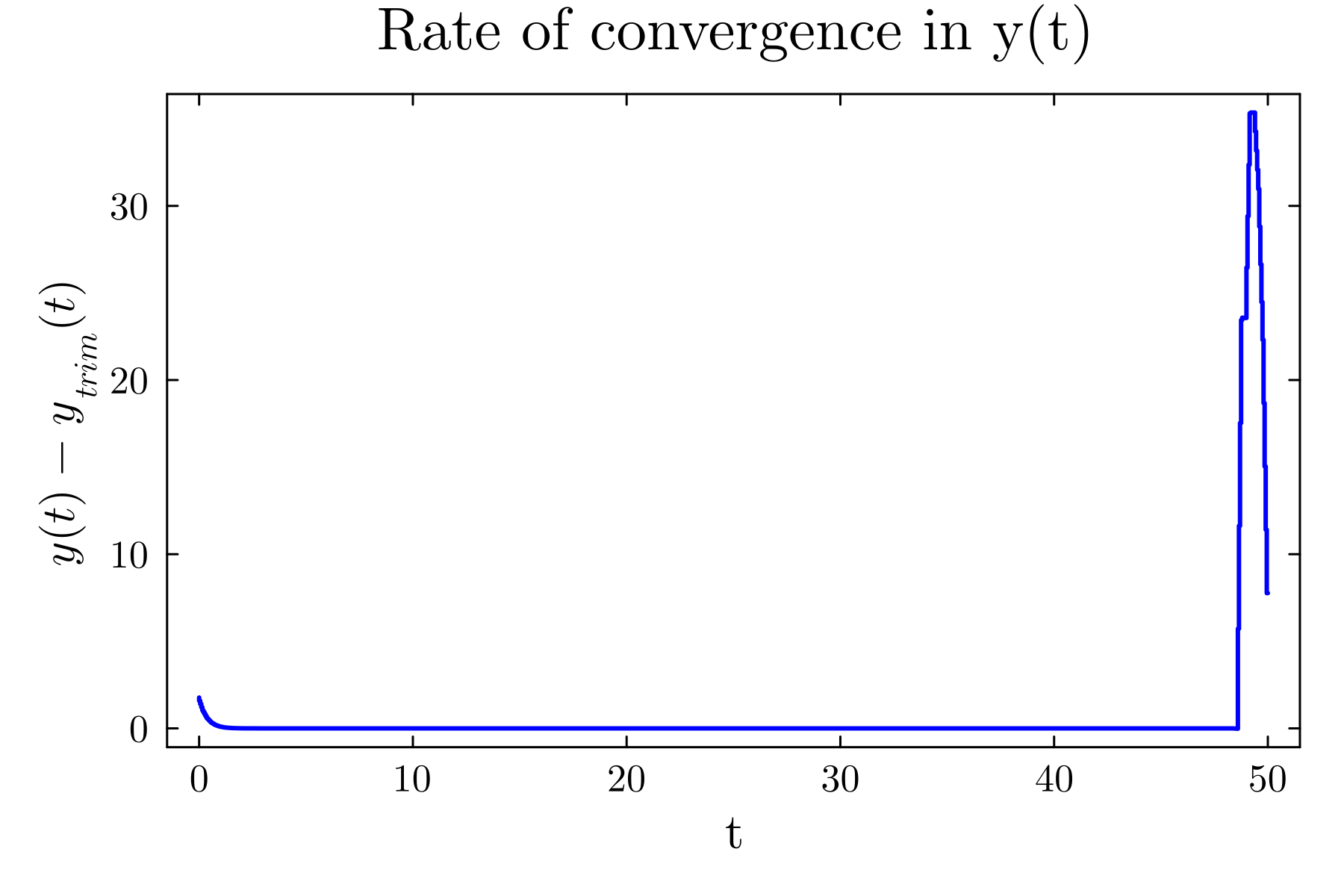}
\caption{Example 2 ($\alpha \not= 0$): Turnpike property depicted for the state, the co-state and the control in the non-flat case. The solution is represented in blue and the trim turnpike in dash black.}
\label{fig:NLQ-3}
\end{figure}

\subsection{Example 3: Kepler orbital transfer problem}
\label{subsec:ex_kepler_full}

We now consider the Kepler example investigated in \cite{Flasskamp2025}, and reinterpret it in our framework where the cyclic variable is fixed at both endpoints. Here the cyclic variable is the angular coordinate $\theta$.
Let the \emph{shape} state and the \emph{cyclic} variables be 
\[
x := (s,v_s,v_\theta)\in\mathbb{R}^3, \quad \text{and} \quad y := \theta\in\mathbb{R}.
\]
The dynamics of the controlled Kepler problem is
\begin{equation}
\begin{aligned}
\dot s &= v_s, \qquad
\dot v_s = s v_\theta^2 - \frac{k}{s^2} + \frac{1}{m}f_s(u), \\
\dot \theta &= v_\theta, \qquad
\dot v_\theta = -\frac{2}{s}v_\theta v_s + \frac{1}{m s^2}f_\theta(u),
\end{aligned}
\label{eq:Kepler-dyn}
\end{equation}
where $k>0$ is the gravitational constant (in suitable units), $m>0$ is a mass parameter, and $u(t) = (u_s, u_\theta) \in\mathbb{R}^m$ denotes the control input (e.g.\ thrust components), entering through the control forces $f_s(u)$ and $f_\theta(u)$. For simplicity we assume $m = k = 1$ and $f_s(u) = u_s,\ f_\theta(u) = u_\theta$.
In compact form, \eqref{eq:Kepler-dyn} fits our general structure
\[
\dot x = f_1(x) + F_2(x)\,u,\qquad \dot y = g_1(x) + G_2(x)\,u,
\]
with $g_1(x)=v_\theta, ~ G_2(x)\equiv 0$,
and $f_1,F_2$ read off from \eqref{eq:Kepler-dyn}.
We consider the OCP given by
\begin{equation}\label{eq:Kepler-OCP}
\min_{u(\cdot)}\ J(u, x) = \frac{1}{2}\int_0^T \left( \|x-\tilde x \|^2_2 + \| u-\tilde u\|^2_2\right)\,dt
\quad \text{subject to } \eqref{eq:Kepler-dyn},
\end{equation}
with 
\begin{equation}
\label{eq:Kepler-init}
\tilde x = \left(\tilde s,\ 0,\ \sqrt{\frac{1}{\tilde s^3}}\right) \quad \text{and} \quad \tilde u = (0,0)
\end{equation}
corresponding to a circular orbit of radius $\tilde s$.
Notice that $\theta$ is cyclic since it does
not appear neither in the dynamics nor in the objective. In our boundary-constrained setting, we additionally impose endpoint conditions on the cyclic variable i.e.
\[
\theta(0)=\theta_0,\qquad \theta(T)=\theta_T,
\]
which are natural in transfer problems where the final angular phase is prescribed.

\medskip
\noindent\textit{Reduced BVP and trim reference.}
Since $\theta$ is cyclic, the associated adjoint
$p_\theta$ is constant: $p_\theta(t)\equiv \lambda\in\mathbb{R}$.
The reduced Pontryagin boundary value problem associated with the reduced OCP can be written as 
\begin{equation}
\label{eq:Kepler-RBVP}
\begin{aligned}
\dot s ~  &= v_s,   &&  ~~ \dot p_s = -v_\theta^2 p_{v_s} - \frac{2p_{v_s}}{s^3}
- \frac{2v_\theta v_s p_{v_\theta}}{s^2} + \frac{4p^2_{v_\theta}}{s^2}  + (s - \tilde s),  \\
\dot v_s &= s v_\theta^2 - \frac{1}{s^2} + p_{v_s}, && ~~ \dot p_{v_s} = -p_s
+ \frac{2v_\theta p_{v_\theta}}{s} + (v_s - \tilde v_s), \\
\dot v_\theta &= -\frac{2v_\theta v_s}{s} + \frac{p_{v_\theta}}{s^4},  && ~~
\dot p_{v_\theta} = -\lambda - 2 s v_\theta p_{v_s} + \frac{2v_\theta v_s p_{v_\theta}}{s} + (v_\theta - \tilde v_\theta).
\end{aligned}
\end{equation}
And the solution of the associated static problem (SOP) is the stationary point of this reduced (BVP) i.e. point satisfying:
\[
\begin{aligned}
\bar v_s &= \bar p_{v_\theta} = 0, \quad \bar p_s = -\tilde v_s, \quad \lambda = - 2\bar s \bar v_\theta \bar p_{v_\theta} + (\bar v_\theta - \tilde v_\theta). \\
0 &= \bar s \bar v_\theta^2 - \frac{1}{\bar s^3} + \bar p_s, \quad 0 = -\bar v_\theta^2 \bar p_{v_s} - \frac{2 \bar p_{v_s}}{\bar s^3} + (\bar s - \tilde s), 
\end{aligned}
\]
The corresponding trim trajectory for the cyclic variable is the constant-rate motion
\begin{equation}\label{eq:Kepler-trim}
\dot{\bar\theta}_T(t)=\bar v_\theta,\qquad \bar\theta_T(T/2)=\bar\theta_{T/2}.
\end{equation}

For the numerical simulation, we take $T=100$, $(x_0,y_0) = (7,0,\sqrt{7^{-3}},0)$ and $(x_f,y_f) = (3,0,\sqrt{3^{-3}},\pi T)$. As illustrated in Figures~\ref{fig:Kepler-1} and \ref{fig:Kepler-2}, one recovers an exponential convergence toward the trim turnpike defined above.

\def\sizeFig{0.5}
\begin{figure}[ht!]
\def\x{493}
\def\y{470}
\begin{tikzgraphics}{\sizeFig\textwidth}{\x}{\y}{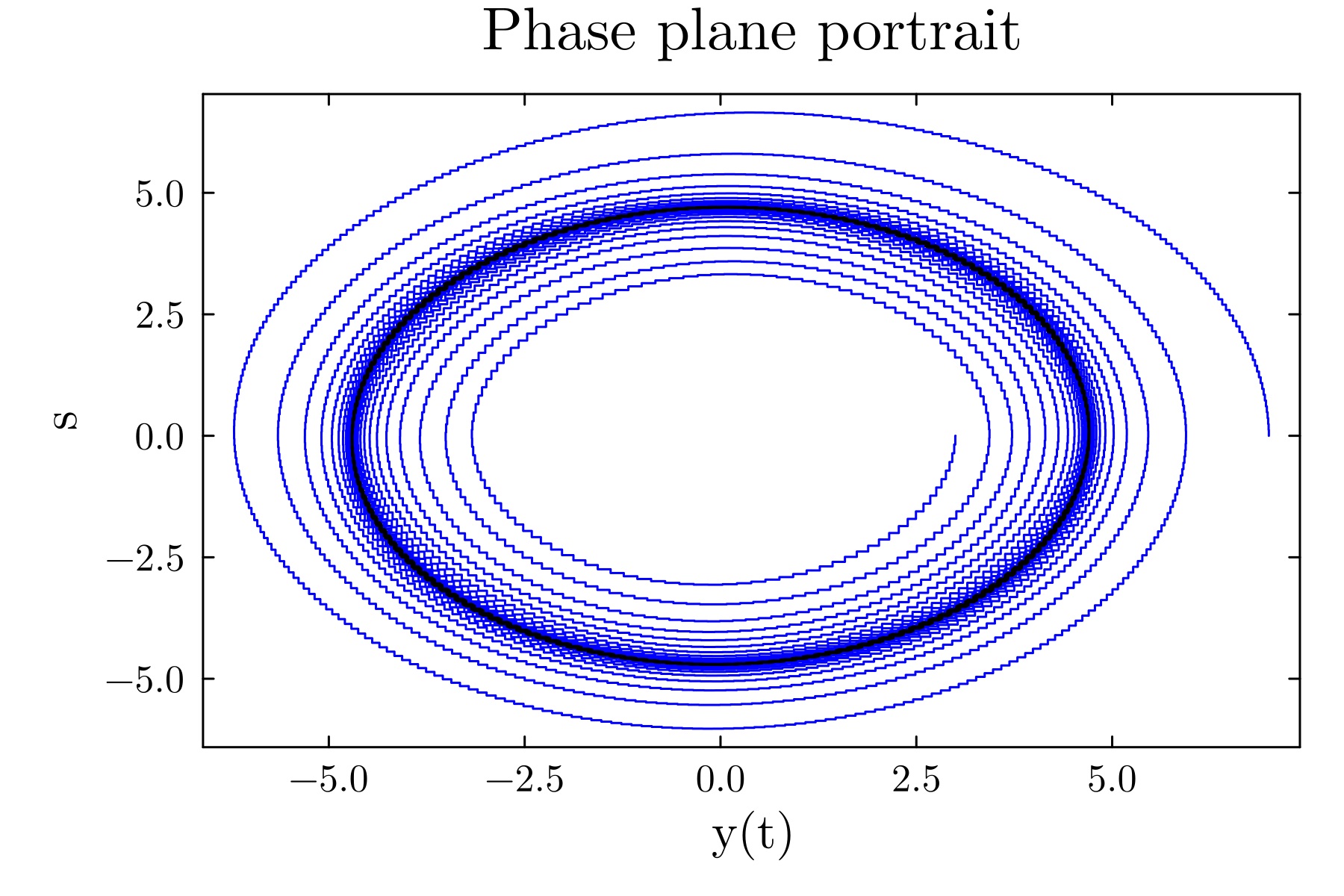}
    
\pxcoordinate{0.823*\x}{0.308*\y}{A}; \draw (A) node[blue, sloped, rotate=143] {\ding{228}};

\coordinate (q0)  at (16.3, 8.5);
\draw (q0) node[below] {$q_0$};    
\end{tikzgraphics} 
\hspace{-0.cm}
\includegraphics[width=0.5\textwidth]{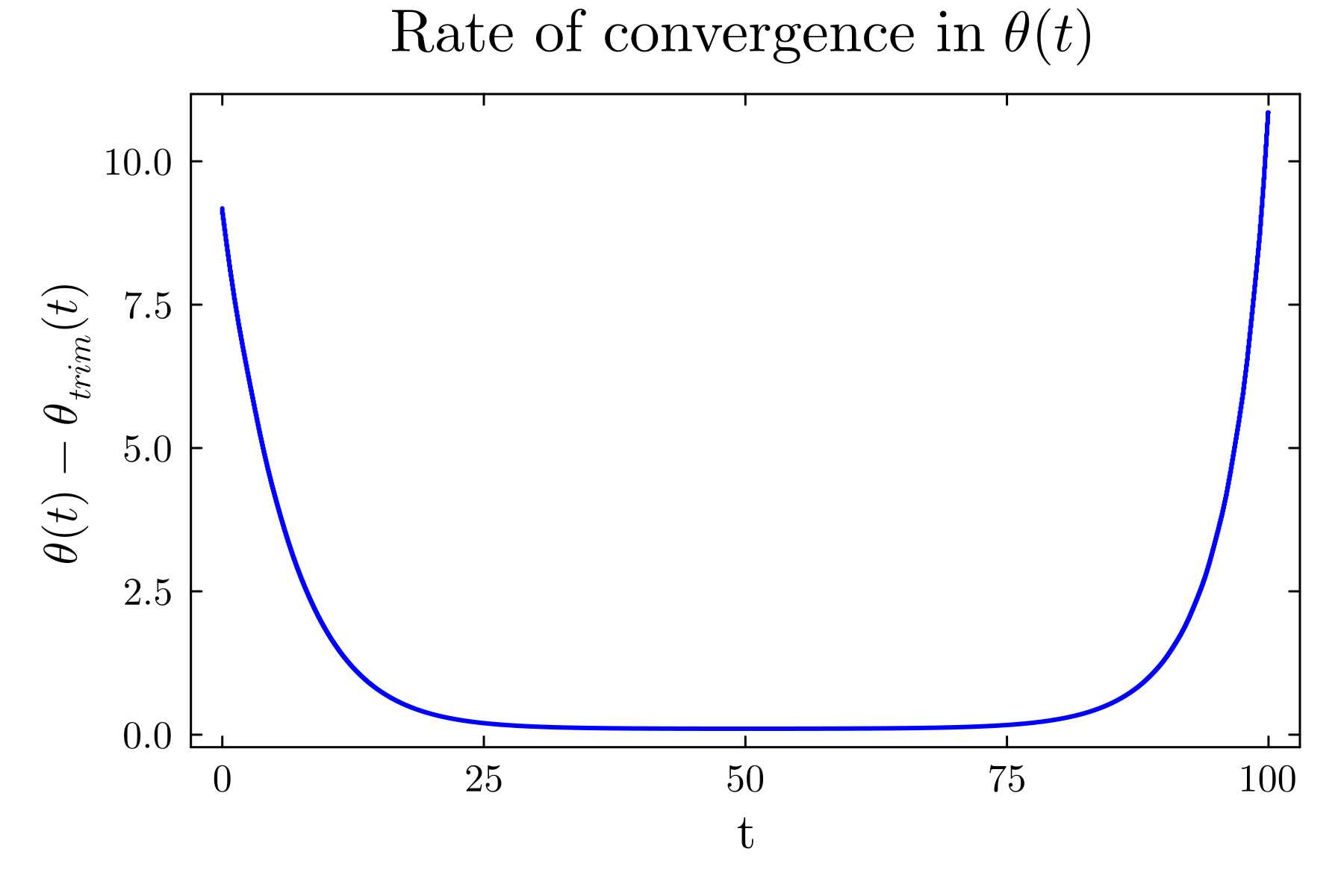}
\vspace{-0.3cm}
\caption{Example 3: Illustration of the convergence of a solution toward a trim for the Kepler problem in 2D-plane in the left. Thick blue is used for the optimal solution with turnpike property, black for the associated trim. The exponential convergence of the distance in blue between the angle variable of the optimal solution and the angle of the trim turnpike is shown on the right.}
\label{fig:Kepler-1}
\end{figure}
\begin{figure}[ht]
\centering
\def\size{0.33}
\def\Size{0.21}
\def\sizeh{-0.3}
\includegraphics[width=\size\textwidth]{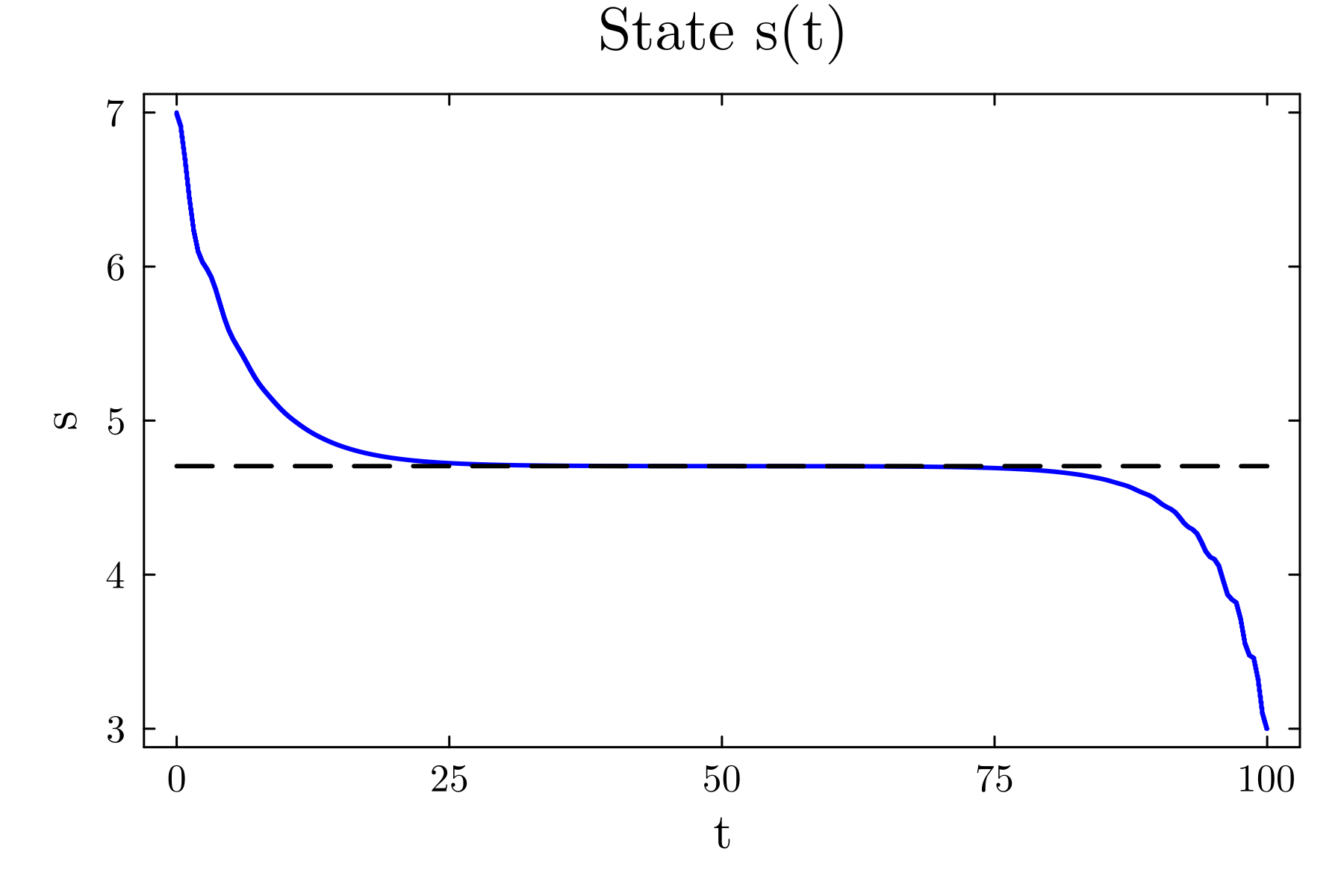}
\hspace{\sizeh cm}
\includegraphics[width=\size\textwidth]{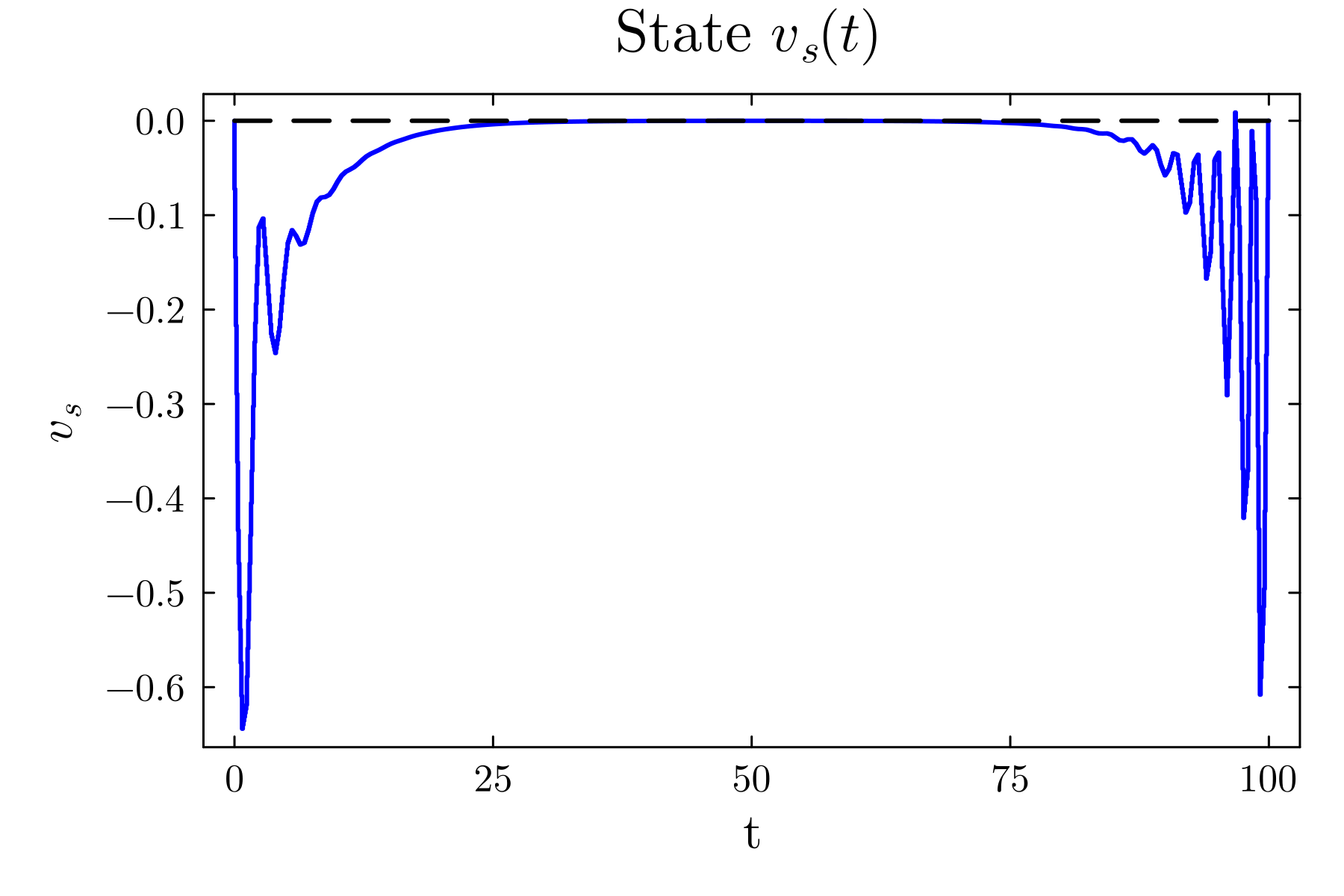}
\hspace{\sizeh cm}
\includegraphics[width=\size\textwidth]{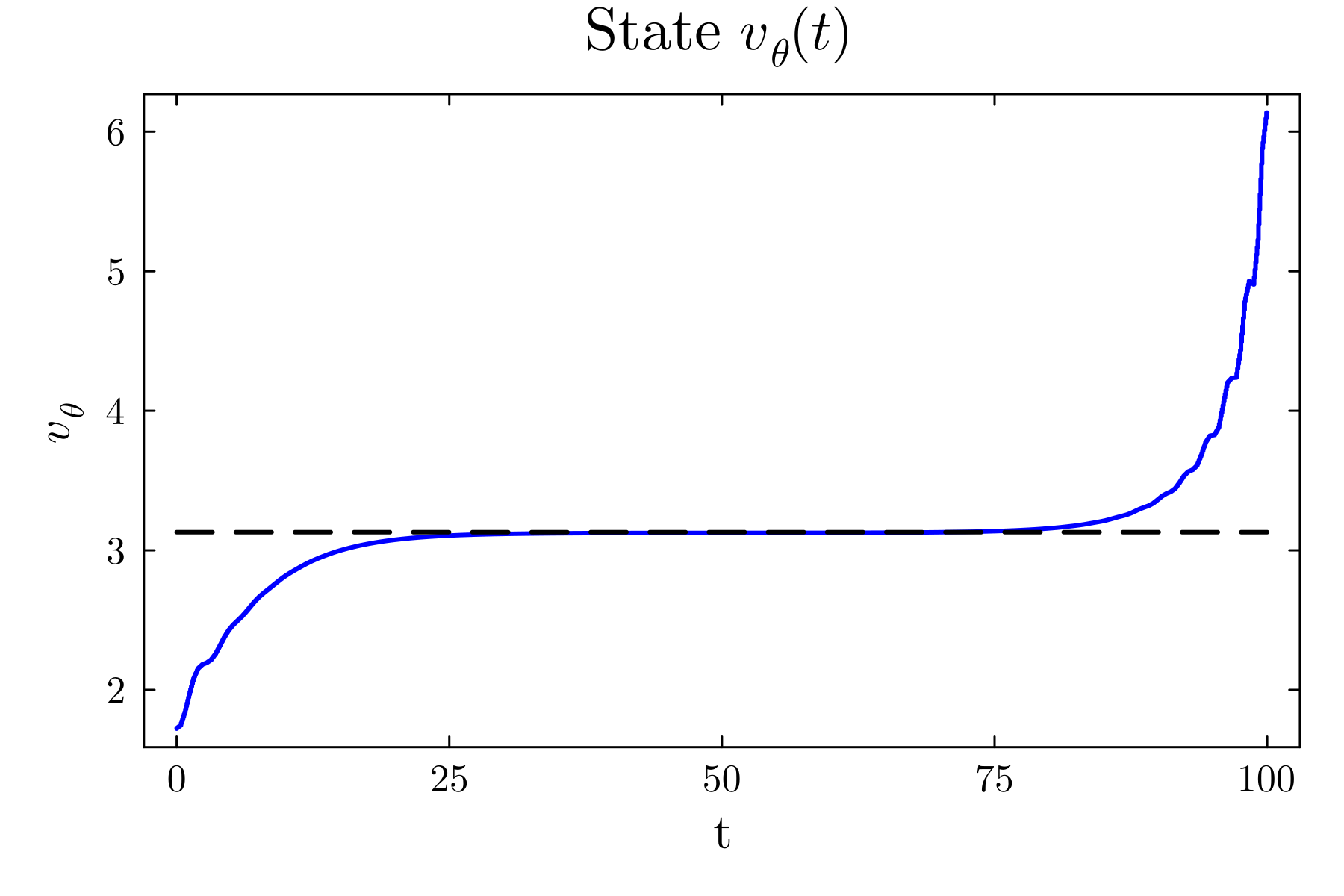}
\includegraphics[width=\size\textwidth]{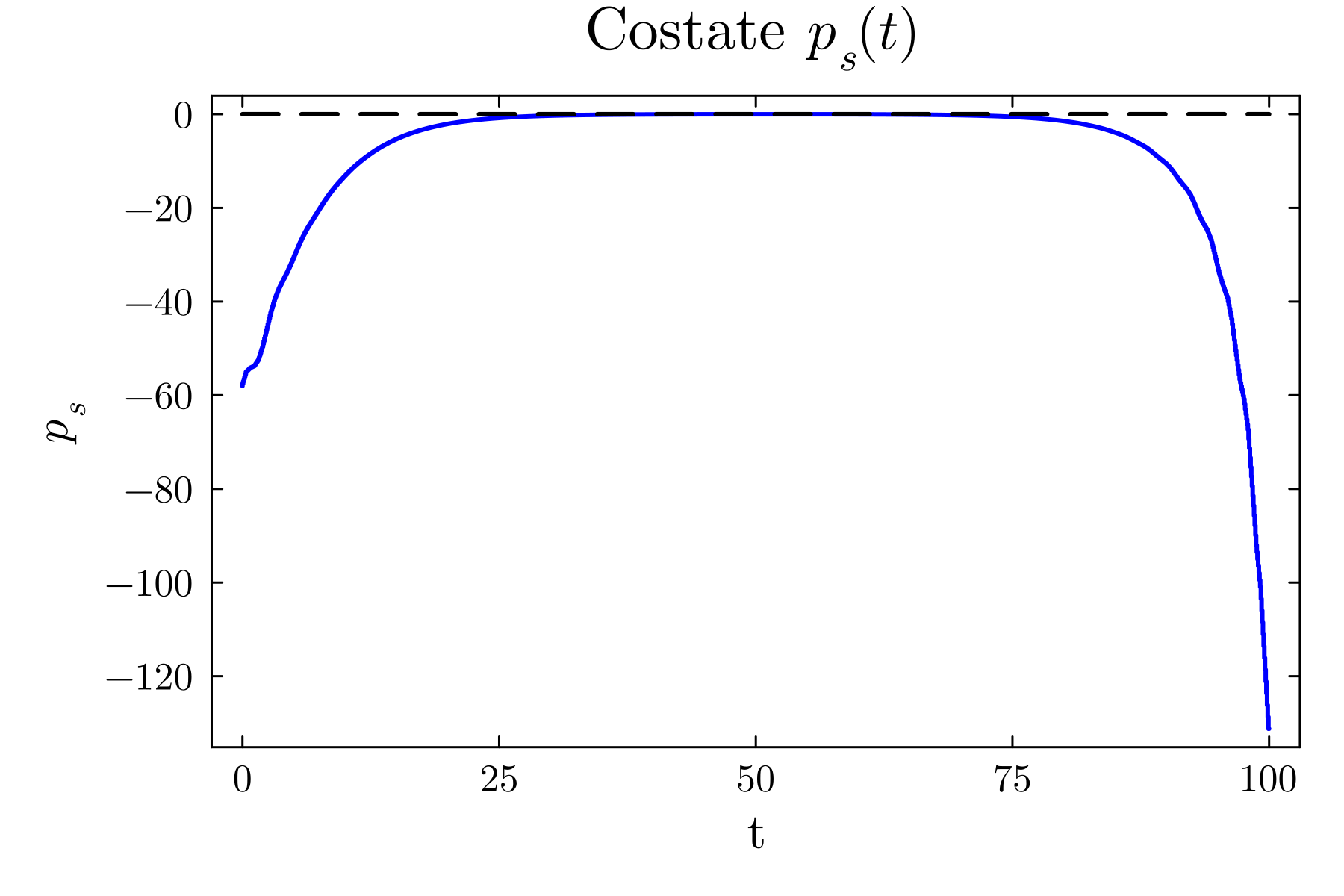}
\hspace{\sizeh cm}
\includegraphics[width=\size\textwidth]{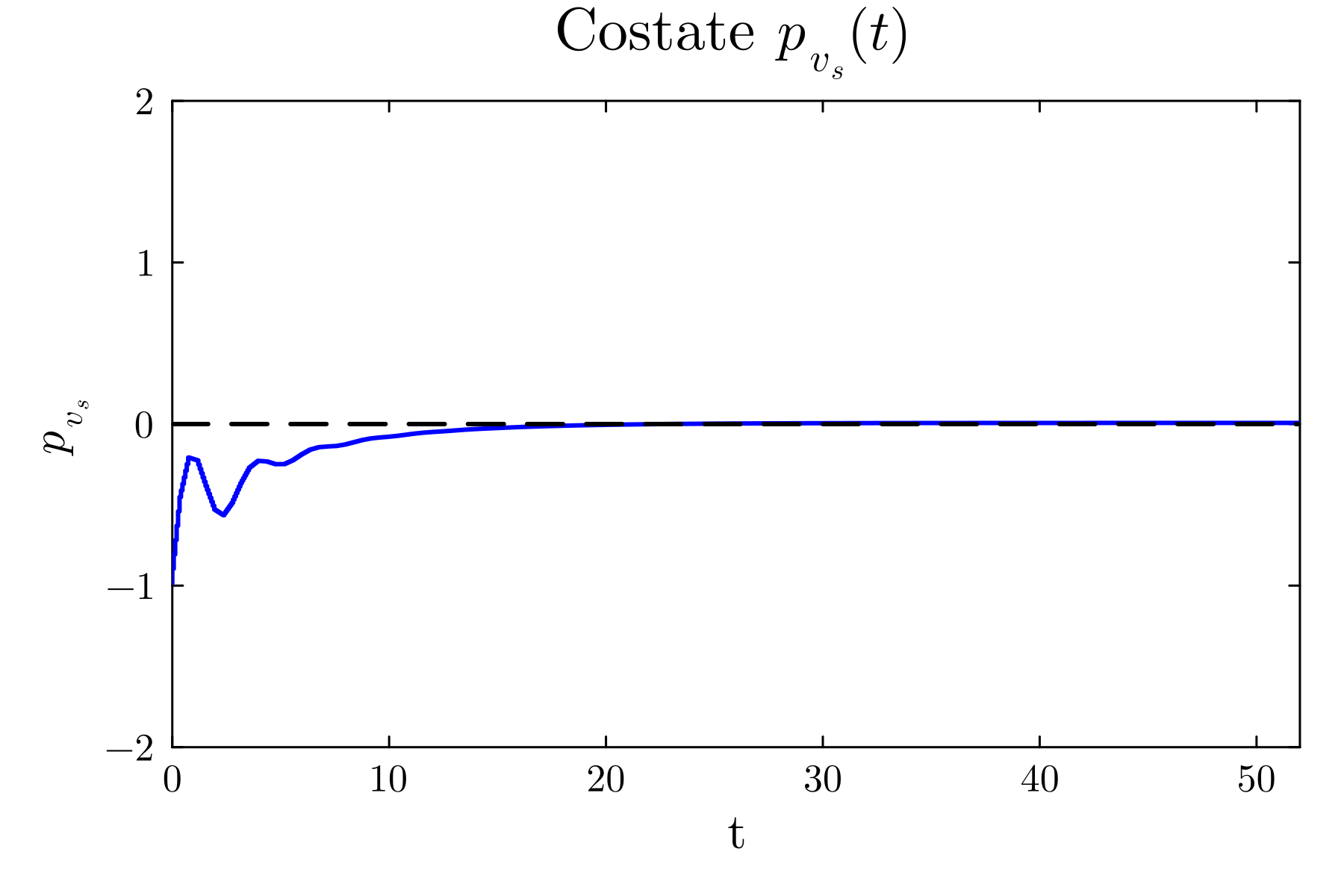}
\hspace{\sizeh cm}
\includegraphics[width=\size\textwidth]{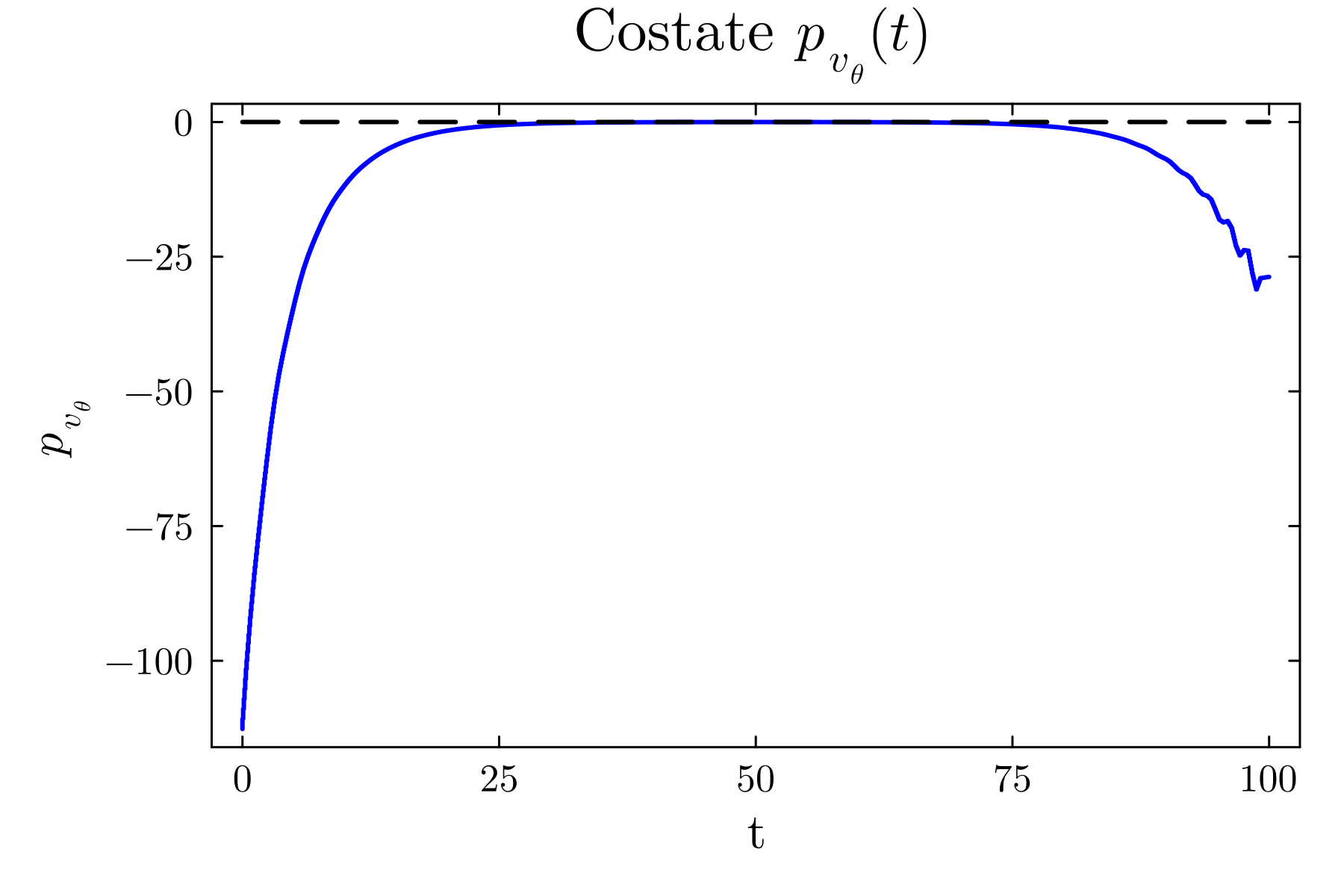}
\vspace{-0cm}
\includegraphics[width=\size\textwidth]{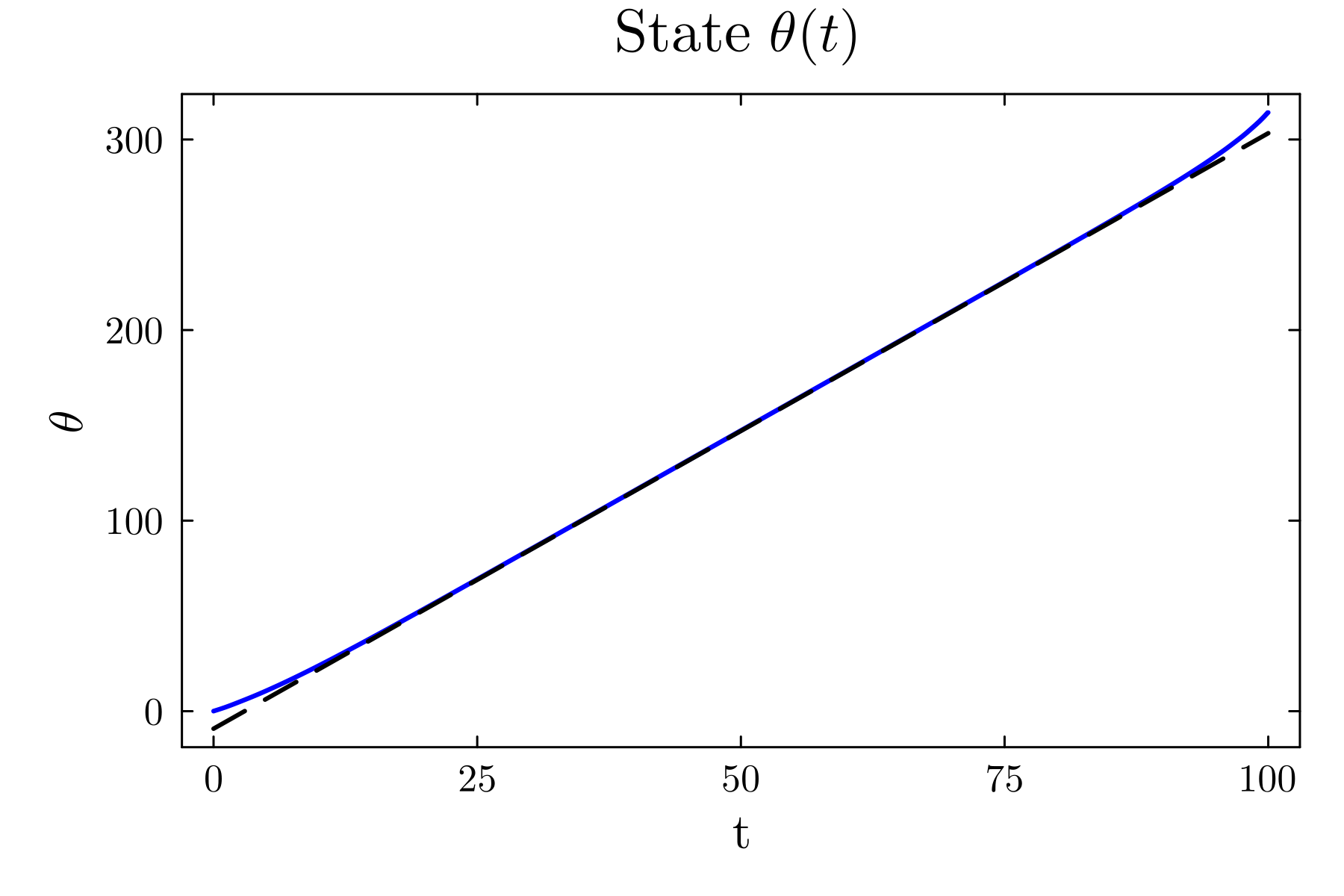}
\hspace{\sizeh cm}
\includegraphics[width=\size\textwidth]{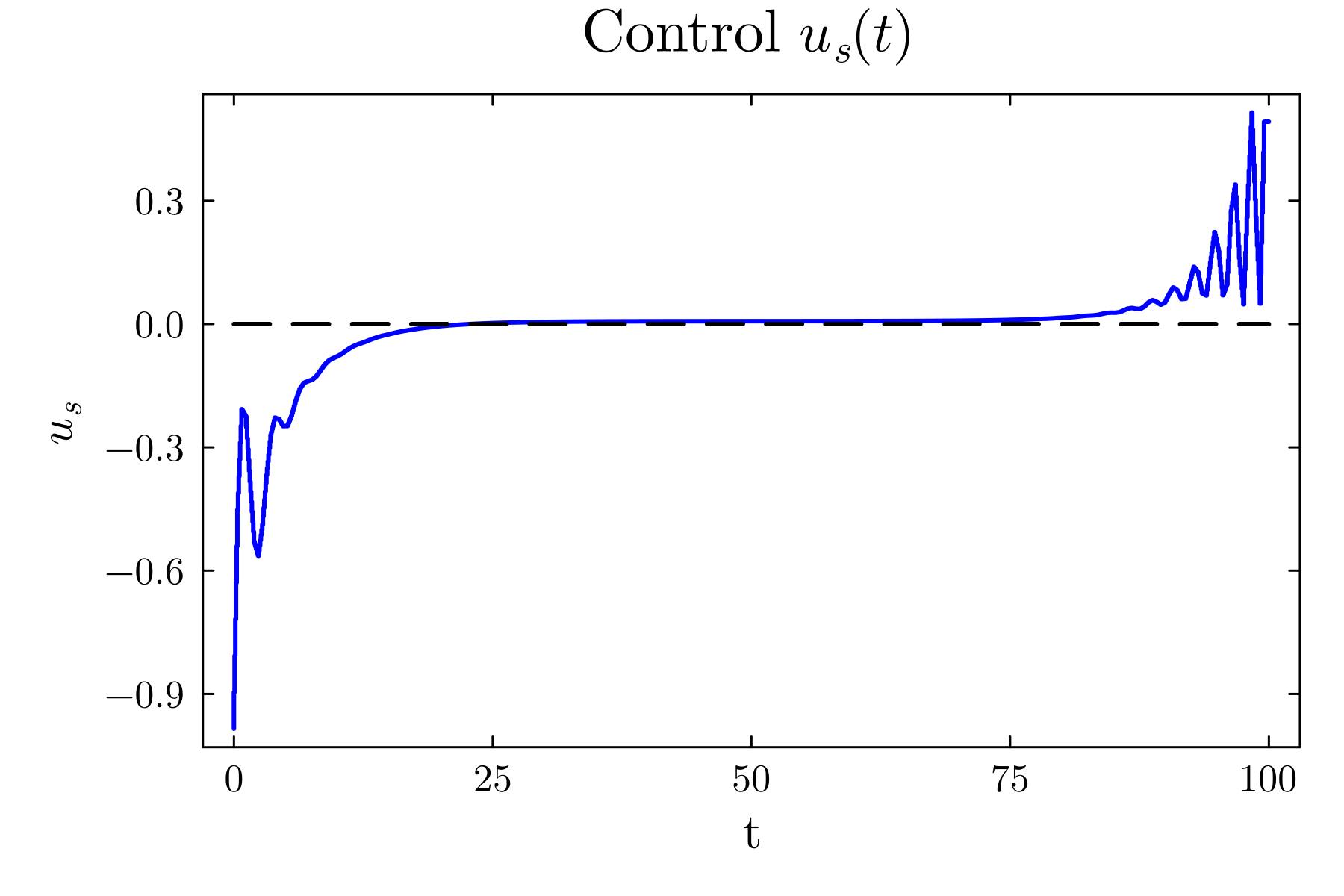}
\hspace{\sizeh cm}
\includegraphics[width=\size\textwidth]{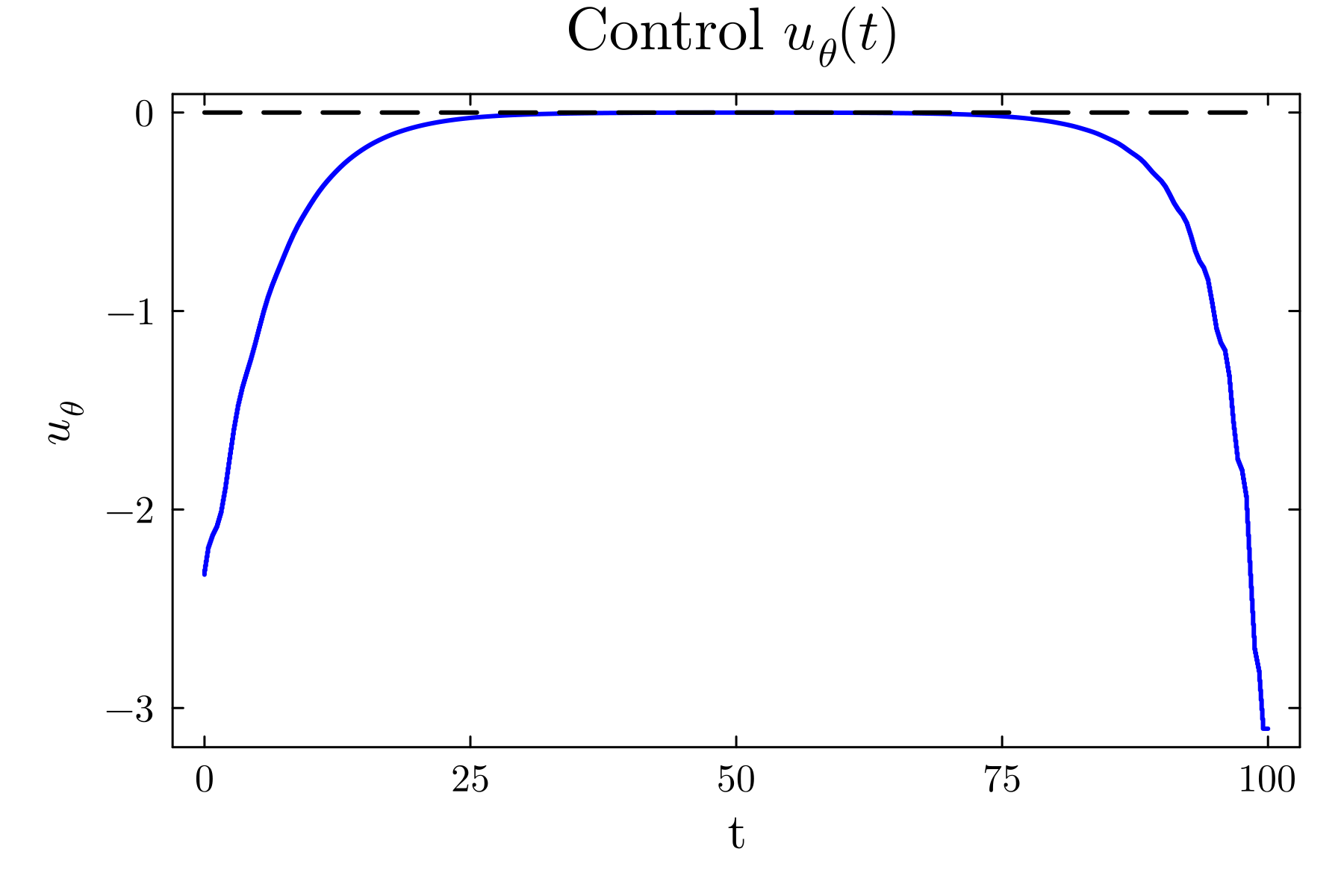}
\caption{Example 3: The turnpike property is depicted for the control variables $u_s, u_\theta$, state variables $s, \theta, v_s, v_\theta$ and adjoint variables $p_s, p_{v_s}, p_{v_\theta}$. The trim is depicted in dashed black and the optimal solution in blue. We observe a partial turnpike because the trim is not constant in $\theta$.}
\label{fig:Kepler-2}
\end{figure}

\section{Conclusion}
\label{sec:conclusion}

In this work, we have established an exponential trim turnpike property for optimal control problems with cyclic variables subject to fixed endpoint conditions. The main contribution lies in the construction of an appropriate reduced optimal control problem (ROCP) parametrized by the adjoint of the cyclic variable. The corresponding state-adjoint system coincides with a Hamiltonian boundary value problem obtained from the initial optimal control problem by an adapted reduction procedure. The newly defined reduced problem allows us to identify the trim turnpike reference based on the static problem associated with the ROCP. The obtained results extend the trim turnpike theory beyond the recently treated case with free final conditions \cite{Flasskamp2025}, where the underlying symmetry of the problem leads to a straight-forward reduction.
Under a hyperbolicity assumption on the equilibrium of the reduced Hamiltonian system, we showed that the non-cyclic variables and the control remain exponentially close to a steady pair, while the cyclic variable is exponentially close to a trim trajectory determined through a midpoint anchoring mechanism.

The proposed framework unifies and extends several known results including \cite{Faulwasser2021,Dario2021,Tre:23,Flasskamp2025}. In particular, it refines the linear turnpike theorem from \cite{Tre:23} by providing exponential estimates for the rate of convergence toward the identified reference trim. In comparison with \cite{Dario2021}, the new approach allows the explicit characterization of the reference trajectory. In mechanical systems with symmetries, such as the Kepler problem, it generalizes the exponential trim turnpike results from \cite{Flasskamp2025} to the case of fixed terminal conditions. The fully nonlinear example further demonstrates that the reduction strategy leading to ROCP/RBVP applies beyond linear or mechanical models. In addition, the results on the asymptotic behavior (when $T$ goes to infinity) showing the dependence of the trim on the final time obtained in the linear-quadratic example suggest the convergence toward the trim identified in \cite{Flasskamp2025} by symmetry reduction. While the trim turnpike trajectory depends on the boundary values, this observation allows to conjecture the definition of the asymptotic trim when the final time converges to infinity. This information can be used in the numerical methods for computation of optimal solution on large time intervals. In this setting computation of the numerical solution is especially challenging and the trim turnpike induced initialization can be helpful.

The obtained characterization of the trim turnpike property opens doors for better understanding of the non static turnpike phenomenon and the important next step is to use it for the general class of symmetric optimal control problems, which do not necessarily admit the shape-cyclic representation. Further future research directions include the analysis of robustness with respect to perturbations, the extension to infinite-dimensional systems and PDE-constrained problems, and the study of numerical schemes that exploit the reduced structure and midpoint anchoring to improve
long-horizon optimal control computations.

\printbibliography[title={References}]

\end{document}